\newtheorem{thmm}{Theorem}
\newtheorem{lemm}{Lemma}
\newcounter{neweqn}
\newcommand{\beq}[1]{\begin{equation} \refstepcounter{neweqn} \label{#1}}
\newcommand{\eeq}{\end{equation}}
\newcommand{\bed}{\begin{displaymath}}
\newcommand{\eed}{\end{displaymath}}
\newcommand{\bedd}{\bed\begin{array}{l}}
\newcommand{\eedd}{\end{array}\eed}
\newcommand{\disp}{\displaystyle}
\newcommand{\A}{{\mathcal A}}
\newcommand{\Bb}{\beta_{\rm b}}
\newcommand{\Bs}{\beta_{\rm s}}
\newcommand{\rr}{{\hbox{{\rm I}{\kern -0.22em}{\rm R}}}}
\newcommand{\sd}{\operatorname{d}}
\newcommand{\dt}{\operatorname{dt}}
\def\({\left(}
\def\){\right)}
\def \fA {{\mathcal A}}
\def \L {{\mathcal L}}
\def \R {{\mathbb R}}
\begin{document}

\title{Optimal Strategies for Round-Trip Pairs Trading Under Geometric Brownian Motions}
\author{E. Crawford Das}
\address{({\rm E. Crawford Das}) Department of Mathematics, University of Georgia,
Athens, GA 30602, USA,  Emily.Crawford@uga.edu}
%\author{P. Luu}
%\address{({\rm P. Luu}) Department of Mathematics, University of North Georgia,
% Oakwood, GA 30566, USA,  phong.luu@ung.edu}
\author{J. Tie}
\address{({\rm J. Tie}) Department of Mathematics, University of Georgia,
Athens, GA 30602, USA,  jtie@uga.edu}
\author{Q. Zhang}
\address{({\rm Q. Zhang}) Department of Mathematics, University of Georgia,
Athens, GA 30602, USA, qz@uga.edu}

%\date{ }

\maketitle
\begin{abstract}
This paper is concerned with an optimal strategy for simultaneously trading a pair of stocks. The idea of pairs trading is to monitor their price movements and compare their relative strength over time. A pairs trade is triggered by the divergence of their prices and consists of a pair of positions to short the strong stock and to long the weak one. Such a strategy bets on the reversal of their price strengths. A round-trip trading strategy refers to opening and closing such a pair of security positions. Typical pairs-trading models usually assume a difference of the stock prices satisfies a mean-reversion equation. However, we consider the optimal pairs-trading problem by allowing the stock prices to follow general geometric Brownian motions. The objective is to trade the pairs over time to maximize an overall return with a fixed commission cost for each transaction. Initially, we allow the initial pairs position to be either long or flat.  We then consider the problem when the initial pairs position may be long, flat, or short.  In each case, the optimal policy is characterized by threshold curves obtained by solving the associated HJB equations.

%%\vspace*{5in}
%%\noindent\underline{\hspace*{1in}}

%%\noindent{\it 2010 Mathematics Subject Classification.} 91G80, 93E20

%%\noindent{\it Key words and phrases.} pairs trading, optimal policy,
%%quasi-variational inequalities, geometric Brownian motions with regime switching

\end{abstract}

\newpage

\baselineskip22pt
\section{Introduction}

\par	This paper is concerned with an optimal strategy for simultaneously trading a pair of stocks. The purpose of pairs trading is to hedge the risk associated with buying and holding shares of one stock by selling shares of a related stock.  The idea of pairs trading is to track the prices of the two stocks that follow roughly the same trajectory over time.  A pairs trade is triggered by the divergence of their prices and consists of a pair of positions to short the strong stock and to long the weak one. Such a strategy bets on the reversal of their price strengths. Pairs trading, which was pioneered by quantitative researchers at brokerage firms in the 1980s, is beneficial, because it can be profitable under any market circumstances \cite{Gatev}.  A round-trip trading strategy refers to opening and closing such a pair of security positions. Typical pairs-trading models usually assume a difference of the stock prices satisfies a mean-reversion equation. However, we consider the optimal pairs-trading problem by allowing the stock prices to follow general geometric Brownian motions as in \cite{JOptim}. One benefit of this model is that it does not specificy any relationship between the pairs of stocks or require them to satisify any measure of correlation, thus allowing for greater possibilities in the choice of pairs.  The Brownian motion, whose sample path is a random walk, encodes the assumption that it is impossible to accurately predict the change in the price of a stock from day to day.  Our objective is to trade the pairs over time to maximize an overall return with a fixed commission cost for each transaction. Initially, we allow the initial pairs position to be either long or flat.  We then consider the problem when the initial pairs position may be long, flat, or short.  In each case, the optimal policy is characterized by threshold curves obtained by solving the associated Hamilton-Jacobi-Bellman (HJB) equations.
\section{Problem Formulation}
Consider two stocks, $\textbf{S}^1$ and $\textbf{S}^2$.  Let $\{X_t^1, t\ge 0\}$ denote the prices of the stock $\textbf{S}^1$, and let $\{X_t^2, t\ge0\}$ denote the prices of the stock $\textbf{S}^2$.  They satisfy the following stochastic differential equation:
\beq{SDE}
\sd
\begin{pmatrix}
 X_t^1 \\
X_t^2
\end{pmatrix}
=
\begin{pmatrix}
 X_t^1 & \\
& X_t^2
\end{pmatrix}
\left[
\begin{pmatrix}
 \mu_1 \\
\mu_2
\end{pmatrix}
\dt +
\begin{pmatrix}
 \sigma_{11} & \sigma_{12} \\
 \sigma_{21} & \sigma_{22} 
\end{pmatrix}
\sd
\begin{pmatrix}
W_t^1 \\
W_t^2
\end{pmatrix}
\right]
\eeq
where $\mu_i$, $i=1, 2$ are the return rates, $\sigma_{ij}$, $i, j = 1,2$ are the volatility constants, and $\left(W_t^1, W_t^2\right)$ is a 2-dimensional standard Brownian motion.

In this paper, we consider a round-trip pairs trading strategy.  Intially, we assume the pairs position, which we will denote $\textbf{Z}$, consists of a one-share long position in stock $\textbf{S}^1$ and a one-share short position in stock $\textbf{S}^2$.  We consider the case that the net position may initially be long (with one share of $\textbf{Z}$) or flat (with no stock holdings of either  $\textbf{S}^1$ or $\textbf{S}^2$).  Let $i=0,1$ denote the initial net positions of long and flat, respectively.  If initially we are long ($i=1$), we will close the pairs position $\textbf{Z}$ at some time $\tau_0$ and conclude our trading activity.  Otherwise, if initially we are flat ($i=0$), we will first obtain one share of $\textbf{Z}$ at some time $\tau_1$, and then close pairs position $\textbf{Z}$ at some time $\tau_2\ge\tau_1,$ thus concluding our trading activity.

Let $K$ denote the fixed percentage of transaction costs associate with buying or selling of stocks.  Then given the initial state $(x_1,x_2),$ the initial net position $i=0,1,$ and the decision sequences $\Lambda_1=(\tau_0)$ and $\Lambda_0=(\tau_1,\tau_2)$, the resulting reward functions are
\begin{align}
J_0(x_1,x_2,\Lambda_0)=&~\mathbb{E}\big[e^{-\rho\tau_2}\left(\beta_s X_{\tau_2}^1-\beta_b X_{\tau_2}^2\right)\mathbb{I}_{\{\tau_2<\infty\}}-e^{-\rho\tau_1}\left(\beta_b X_{\tau_1}^1-\beta_s X_{\tau_1}^2\right)\mathbb{I}_{\{\tau_1<\infty\}}\big]\\
J_1(x_1,x_2,\Lambda_1)=&~\mathbb{E}\left[e^{-\rho\tau_0}\left(\beta_s X_{\tau_0}^1-\beta_b X_{\tau_0}^2\right)\mathbb{I}_{\{\tau_0<\infty\}}\right].
\end{align}

Let $V_0(x_1,x_2)=\underset{\Lambda_0}\sup ~J_0(x_1,x_2,\Lambda_0)$ and $V_1(x_1,x_2)=\underset{\Lambda_1}\sup ~J_1(x_1,x_2,\Lambda_1)$ be the associated value functions.

\section{Properties of the Value Functions}
In this section, we establish basic properties of the value functions.
\begin{lemm}
For all $x_1$, $x_2>0$, we have
\begin{align}
0\le  V_0(x_1,x_2)\le 2x_1+2x_2,\\
\beta_sx_1-\beta_bx_2\le  V_1(x_1,x_2)\le x_1.
\end{align}
%$$\beta_sx_2-\beta_bx_1\le V_{-1}(x_1,x_2)\le x_2,\text{~and}$$
%$$0\le V_0(x_1,x_2)\le 4x_1+4x_2.$$
\end{lemm}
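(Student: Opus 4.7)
The plan is to handle each of the four inequalities separately. The two lower bounds will follow by exhibiting explicit admissible strategies, while the two upper bounds will reduce, after discarding nonpositive contributions in each reward functional, to the standard fact that $e^{-\rho t}X_t^i$ is a nonnegative supermartingale under the standing assumption $\rho>\mu_i$ (needed throughout the paper for finiteness of the value functions).

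For the lower bounds I would first choose, in the flat-initial-state problem, the degenerate strategy $\tau_1=\tau_2=\infty$; then $\mathbb{I}_{\{\tau_1<\infty\}}=\mathbb{I}_{\{\tau_2<\infty\}}=0$ and $J_0(x_1,x_2,\Lambda_0)=0$, so $V_0(x_1,x_2)\ge 0$. In the long-initial-state problem, taking $\tau_0=0$ (immediate liquidation) gives $J_1(x_1,x_2,\Lambda_1)=\beta_s x_1-\beta_b x_2$, hence $V_1(x_1,x_2)\ge \beta_s x_1-\beta_b x_2$. For the upper bounds, the core analytic step is to establish
\[
E\bigl[e^{-\rho\tau}X_\tau^i\,\mathbb{I}_{\{\tau<\infty\}}\bigr]\le x_i,\qquad i=1,2,
\]
for every stopping time $\tau$. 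Applying It\^o's formula to $Y_t^i:=e^{-\rho t}X_t^i$ yields $dY_t^i=Y_t^i[(\mu_i-\rho)\,dt+\sigma_{i1}\,dW_t^1+\sigma_{i2}\,dW_t^2]$, so $Y^i$ is a nonnegative local martingale with drift $\le 0$, hence a nonnegative supermartingale. Optional stopping at the bounded stopping time $\tau\wedge T$ gives $E[Y^i_{\tau\wedge T}]\le x_i$, and then letting $T\to\infty$ and invoking Fatou's lemma (using $Y^i\ge 0$) delivers the claimed bound.

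With that in hand, the upper bound for $V_1$ follows by dropping the nonpositive term $-\beta_b X_{\tau_0}^2$ in $J_1$, giving
\[
J_1(x_1,x_2,\Lambda_1)\le E\bigl[e^{-\rho\tau_0}\beta_s X_{\tau_0}^1\mathbb{I}_{\{\tau_0<\infty\}}\bigr]\le \beta_s x_1\le x_1,
\]
and taking the supremum over $\Lambda_1$. For $V_0$ I would expand $J_0$ into its four summands and discard the two with negative sign, leaving
\[
J_0(x_1,x_2,\Lambda_0)\le E\bigl[e^{-\rho\tau_2}\beta_s X_{\tau_2}^1\mathbb{I}_{\{\tau_2<\infty\}}\bigr]+E\bigl[e^{-\rho\tau_1}\beta_s X_{\tau_1}^2\mathbb{I}_{\{\tau_1<\infty\}}\bigr]\le \beta_s(x_1+x_2),
\]
which is at most $2x_1+2x_2$ since $\beta_s\le 1$; taking the supremum over $\Lambda_0$ completes the argument.

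The one step requiring care is the passage from the bounded truncation $\tau\wedge T$ back to $\tau$, since the optional stopping theorem is not directly available for unbounded stopping times. That obstacle is resolved by the nonnegativity of $Y^i$ together with Fatou's lemma, after which the rest is routine bookkeeping.
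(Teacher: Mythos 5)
Your proposal is correct, and it reaches the same two key ingredients as the paper: the lower bounds via the degenerate strategies $\tau_1=\tau_2=\infty$ and $\tau_0=0$, and the upper bounds via the single estimate $\mathbb{E}\bigl[e^{-\rho\tau}X_\tau^i\,\mathbb{I}_{\{\tau<\infty\}}\bigr]\le x_i$, which in both arguments rests on the standing assumption $\rho>\mu_i$. The difference is in how that estimate is obtained and deployed. The paper derives it by applying Dynkin's formula to $e^{-\rho t}X_t^i$ and explicitly bounding the resulting drift integral $\mathbb{E}\bigl[\int_0^{\tau}(\rho-\mu_i)e^{-\rho t}X_t^i\,\dt\bigr]\le x_i$; you instead observe that $e^{-\rho t}X_t^i$ is a nonnegative supermartingale and invoke optional stopping at $\tau\wedge T$ followed by Fatou. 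These are essentially two phrasings of the same fact, though yours is tidier and correctly flags the only delicate point (passing from $\tau\wedge T$ to $\tau$). Where you genuinely diverge is in the bookkeeping for $V_0$: the paper replaces $\beta_s,\beta_b$ by $1$ using $\beta_s\le 1\le\beta_b$, keeps all four terms, and bounds the two negatively-signed ones through the drift integrals, arriving at $2x_1+2x_2$; you simply discard the two nonpositive summands $-\beta_b\,\mathbb{E}[e^{-\rho\tau_2}X_{\tau_2}^2\mathbb{I}]$ and $-\beta_b\,\mathbb{E}[e^{-\rho\tau_1}X_{\tau_1}^1\mathbb{I}]$ outright and obtain the sharper bound $\beta_s(x_1+x_2)\le x_1+x_2$, which of course implies the stated $2x_1+2x_2$. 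Both routes are valid; yours is shorter and gives a better constant, while the paper's integral computation is reused verbatim in the later three-regime lemma, which is presumably why it is written that way.
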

\begin{proof}
Note that for all $x_1$, $x_2>0,$ $V_1(x_1,x_2)\ge J_1(x_1,x_2,\Lambda_1)=\mathbb{E}\left[e^{-\rho\tau_0}\left(\beta_sX_{\tau_0}^1-\beta_bX_{\tau_0}^2\right)\mathbb{I}_{\{\tau_0<\infty\}}\right]$.  In particular, $$V_1(x_1,x_2)\ge J_1(x_1,x_2,0)=\beta_sx_1-\beta_bx_2.$$ % Similarly, $V_{-1}(x_1,x_2)\ge J_{-1}(x_1,x_2,\tau_0)=\mathbb{E}\left[-e^{-\rho\tau_0}\left(\beta_bX_{\tau_0}^1-\beta_sX_{\tau_0}^2\right)\mathbb{I}_{\{\tau_0<\infty\}}\right]$.  In particular, $$V_{-1}(x_1,x_2)\ge J_{-1}(x_1,x_2,0)=\beta_sx_2-\beta_bx_1.$$  Finally, 
%\begin{align*}
%V_0(x_1,x_2)&\ge J_0(x_1,x_2,\tau_1, \tau_2,u)\\
%&=\mathbb{E}\big[\big\{e^{-\rho\tau_2}\left(\beta_sX_{\tau_2}^1-\beta_bX_{\tau_2}^2\right)\mathbb{I}_{\{\tau_2<\infty\}}- e^{-\rho\tau_1}\left(\beta_bX_{\tau_1}^1-\beta_sX_{\tau_1}^2\right)\mathbb{I}_{\{\tau_1<\infty\}}\big\}
%\mathbb{I}_{\{u=1\}}\\
%&\indent+\big\{e^{-\rho\tau_1}\left(\beta_sX_{\tau_1}^1-\beta_bX_{\tau_1}^2\right)\mathbb{I}_{\{\tau_1\infty\}}- e^{-\rho\tau_2}\left(\beta_bX_{\tau_2}^1-\beta_sX_{\tau_2}^2\right)\mathbb{I}_{\{\tau_2<\infty\}}\big\}\mathbb{I}_{\{u=-1\}}\big].
%\end{align*} 
% Clearly, $V_0(x_2,x_2)\ge 0$ by definition and taking $\tau_1=\infty.$  Now, f
For all $\tau_0>0,$
$  J_1(x_1,x_2,\Lambda_1) $
\begin{align*}
& =\mathbb{E}\left[e^{-\rho\tau_0}\left(\beta_sX_{\tau_0}^1-\beta_bX_{\tau_0}^2\right)\mathbb{I}_{\{\tau_0<\infty\}}\right]\\
& \le\mathbb{E}\left[e^{-\rho\tau_0}\left(X_{\tau_0}^1-X_{\tau_0}^2\right)\mathbb{I}_{\{\tau_0<\infty\}}\right]\\
%& =\mathbb{E}\left[e^{-\rho\tau_0}X_{\tau_0}^1\mathbb{I}_{\{\tau_0<\infty\}}\right]-\mathbb{E}\left[e^{-\rho\tau_0}X_{\tau_0}^2\mathbb{I}_{\{\tau_0<\infty\}}\right]\\
& =x_1+\mathbb{E}\left[\int_0^{\tau_0}\left(-\rho+\mu_1\right)e^{-\rho t}X_t^1 \dt \mathbb{I}_{\{\tau_0<\infty\}}\right]-x_2-\mathbb{E}\left[\int_0^{\tau_0}\left(-\rho+\mu_2\right)e^{-\rho t}X_t^2 \dt \mathbb{I}_{\{\tau_0<\infty\}}\right]\\
& \le x_1-x_2-\mathbb{E}\left[\int_0^{\tau_0}\left(-\rho+\mu_2\right)e^{-\rho t}X_t^2 \dt \mathbb{I}_{\{\tau_0<\infty\}}\right]\\
& \le x_1-x_2+\mathbb{E}\left[\int_0^{\infty}\left(\rho-\mu_2\right)e^{-\rho t}X_t^2 \dt \right]\\
%& = x_1-x_2+\left(\rho-\mu_2\right)\int_0^{\infty}e^{-\rho t}x_2e^{\mu_2 t} \dt \\
%& = x_1-x_2+x_2\\
& = x_1.
\end{align*}
Also, for all $x_1$, $x_2 >0$,
\begin{align*}
V_0(x_1,x_2)&\ge J_0(x_1,x_2,\Lambda_0)\\
&=\mathbb{E}\big[e^{-\rho\tau_2}\left(\beta_sX_{\tau_2}^1-\beta_bX_{\tau_2}^2\right)\mathbb{I}_{\{\tau_2<\infty\}}- e^{-\rho\tau_1}\left(\beta_bX_{\tau_1}^1-\beta_sX_{\tau_1}^2\right)\mathbb{I}_{\{\tau_1<\infty\}}.
\end{align*} 
 Clearly, $V_0(x_2,x_2)\ge 0$ by definition and taking $\tau_1=\infty.$  Now, for all $0\le\tau_1\le\tau_2,$ $J_0(x_1,x_2,\Lambda_0)$
\begin{align*}
= & ~ \mathbb{E}\big[e^{-\rho\tau_2}\left(\beta_sX_{\tau_2}^1-\beta_bX_{\tau_2}^2\right)\mathbb{I}_{\{\tau_2<\infty\}}\big]-\mathbb{E}\big[e^{-\rho\tau_1}\left(\beta_bX_{\tau_1}^1-\beta_sX_{\tau_1}^2\right)\mathbb{I}_{\{\tau_1<\infty\}}\big]\\
%\le &~  \mathbb{E}\big[e^{-\rho\tau_2}\left(X_{\tau_2}^1-X_{\tau_2}^2\right)\mathbb{I}_{\{\tau_2<\infty\}}\mathbb{I}_{\{u=1\}}\big]-\mathbb{E}\big[e^{-\rho\tau_1}\left(X_{\tau_1}^1-X_{\tau_1}^2\right)\mathbb{I}_{\{\tau_1<\infty\}}\mathbb{I}_{\{u=1\}}\big]\\
%&+ \mathbb{E}\big[e^{-\rho\tau_1}\left(X_{\tau_1}^1-X_{\tau_1}^2\right)\mathbb{I}_{\{\tau_1<\infty\}}\mathbb{I}_{\{u=-1\}}\big]-\mathbb{E}\big[e^{-\rho\tau_2}\left(X_{\tau_2}^1-X_{\tau_2}^2\right)\mathbb{I}_{\{\tau_2<\infty\}}\mathbb{I}_{\{u=-1\}}\big]\\
\le &~  \mathbb{E}\big[e^{-\rho\tau_2}X_{\tau_2}^1\mathbb{I}_{\{\tau_2<\infty\}}\big]-\mathbb{E}\big[e^{-\rho\tau_2}X_{\tau_2}^2\mathbb{I}_{\{\tau_2<\infty\}}\big]
%&
- \mathbb{E}\big[e^{-\rho\tau_1}X_{\tau_1}^1\mathbb{I}_{\{\tau_1<\infty\}}\big]+\mathbb{E}\big[e^{-\rho\tau_1}X_{\tau_1}^2\mathbb{I}_{\{\tau_1<\infty\}}\big]\\
\le &~ x_1-\mathbb{E}\left[x_2 \mathbb{I}_{\{\tau_2<\infty\}}\right]+\mathbb{E}\left[\int_0^{\tau_2}\left(\rho-\mu_2\right)e^{-\rho t}X_t^2\dt \mathbb{I}_{\{\tau_2<\infty\}}\right]\\
&+x_2-\mathbb{E}\left[x_1 \mathbb{I}_{\{\tau_1<\infty\}}\right]+\mathbb{E}\left[\int_0^{\tau_1}\left(\rho-\mu_1\right)e^{-\rho t}X_t^1\dt \mathbb{I}_{\{\tau_1<\infty\}}\right]
%&+x_1-\mathbb{E}\left[x_2 \mathbb{I}_{\{\tau_1<\infty\}}\right]+\mathbb{E}\left[\int_0^{\tau_1}\left(\rho-\mu_2\right)e^{-\rho t}X_t^2\dt \mathbb{I}_{\{\tau_1<\infty\}}\right]\\
%&+x_2-\mathbb{E}\left[x_1 \mathbb{I}_{\{\tau_2<\infty\}}\right]+\mathbb{E}\left[\int_0^{\tau_2}\left(\rho-\mu_1\right)e^{-\rho t}X_t^1\dt \mathbb{I}_{\{\tau_2<\infty\}}\right]
\end{align*}
Now, 
\begin{align*}
\mathbb{E}\left[\int_0^{\tau_1}\left(\rho-\mu_1\right)e^{-\rho t}X_t^1\dt \mathbb{I}_{\{\tau_1<\infty\}}\right]%&\le   \mathbb{E}\left[\int_0^{\tau_1}\left(\rho-\mu_1\right)e^{-\rho t}X_t^1\dt \mathbb{I}_{\{\tau_1<\infty\}}\right]\\
&\le   \mathbb{E}\left[\int_0^{\infty}\left(\rho-\mu_1\right)e^{-\rho t}X_t^1\dt \right]\\
&=  \left(\rho-\mu_1\right)\int_0^{\infty}e^{-\rho t}x_1e^{\mu_1 t}\dt \\
& =  x_1.
\end{align*}
Similarly,
\begin{align*}
\mathbb{E}\left[\int_0^{\tau_2}\left(\rho-\mu_2\right)e^{-\rho t}X_t^2\dt \mathbb{I}_{\{\tau_2<\infty\}}\right] & \le x_2
%\mathbb{E}\left[\int_0^{\tau_1}\left(\rho-\mu_2\right)e^{-\rho t}X_t^2\dt \mathbb{I}_{\{\tau_1<\infty\}}\mathbb{I}_{\{u=-1\}}\right] & \le x_2, ~\text{and}\\
%\mathbb{E}\left[\int_0^{\tau_2}\left(\rho-\mu_1\right)e^{-\rho t}X_t^1\dt \mathbb{I}_{\{\tau_2<\infty\}}\mathbb{I}_{\{u=-1\}}\right] & \le x_1.
\end{align*}
Thus, $J_0(x_1,x_2,\Lambda_0)\le 2x_1+2x_2.$
\end{proof}
\section{HJB Equations}
In this section, we study the associated HJB equations.  To the above stochastic differential equation (\ref{SDE}), we assign the following partial differential operator
\begin{equation}\label{pdo}
\mathcal{A}=\frac{1}{2}\left\{a_{11}x_1^2\frac{\partial^2}{\partial x_1^2}
+2a_{12}x_1x_2\frac{\partial^2}{\partial x_1 \partial x_2}
+a_{22}x_2^2\frac{\partial^2}{\partial x_2^2}\right\}
+\mu_1 x_1\frac{\partial}{\partial x_1}
+\mu_2 x_2\frac{\partial}{\partial x_2}, 
\end{equation}
where
$a_{11}=\sigma^2_{11}+\sigma^2_{12},\
a_{12}=\sigma_{11}\sigma_{21}+\sigma_{12}\sigma_{22},\mbox{ and }
a_{22}=\sigma^2_{21}+\sigma^2_{22}$ \cite{Oks}.
The associated HJB equations have the form:
For $x_1,x_2>0$,
%\beq{HJB}
%\left\{\begin{array}{l}
%\min\Big\{\rho v_0(x_1,x_2)-\A v_0(x_1,x_2),\
%v_0(x_1,x_2)-v_1(x_1,x_2)+\Bb  x_1-\Bs  x_2\Big\}=0,\\
%\\
%\min\Big\{\rho v_1(x_1,x_2)-\A v_1(x_1,x_2),\
%v_1(x_1,x_2)-\Bs  x_1+\Bb  x_2\Big\}=0.\\
%\end{array}\right.
%\eeq
\beq{HJB}
\begin{cases}
\min\Big\{\rho v_0(x_1,x_2)-\A v_0(x_1,x_2),\
v_0(x_1,x_2)-v_1(x_1,x_2)+\Bb  x_1-\Bs  x_2\Big\}=0,\\
\min\Big\{\rho v_1(x_1,x_2)-\A v_1(x_1,x_2),\
v_1(x_1,x_2)-\Bs  x_1+\Bb  x_2\Big\}=0.\\
\end{cases}
\eeq
\par To solve the above HJB equations, we first convert them into single variable
equations. Let $y=x_2/x_1$ and $v_i(x_1,x_2)=x_1w_i(x_2/x_1)$,
for some function $w_i(y)$ and $i=0,1$.  Then write $\fA v_i$ in terms of $w_i$ to obtain
\begin{align*}
\fA v_i&=\frac12\left\{a_{11}{x_1}^2\left(\frac{y^2w''_i(y)}{x_1}\right)+2a_{12}x_1x_2\left(-\frac{yw''_i(y)}{x_1}\right)+a_{22}{x_2}^2\left(\frac{w''_i(y)}{x_1}\right)\right\}\\
&\indent +\mu_1x_1\left(w_i(y)-yw'_i(y)\right)+\mu_2x_2\left(w'_i(y)\right)\\
%&=\frac12 a_{11}x_1y^2w''_i(y)-a_{12}x_1y^2w''_i(y)+\frac12 a_{22}x_1y^2w''_i(y)+\mu_1x_1w_i(y)+\mu_2x_1yw'_i(y)\\
%&\indent-\mu_1x_1yw'_i(y)\\
&=x_1\left\{\frac{1}{2}\left[a_{11}-2a_{12}+a_{22}\right]y^2w''_i(y)+(\mu_2-\mu_1)y w'_i(y)+\mu_1w_i(y)\right\}.
\end{align*}
Let $\displaystyle{\mathcal{L} w_i(y)=\lambda y^2w''_i(y)+(\mu_2-\mu_1)yw'_i(y)+\mu_1w_i(y)}$, where $\displaystyle{\lambda=\frac{a_{11}-2a_{12}+a_{22}}{2}}$.  \\
So $\fA v_i=x_1\mathcal{L} w_i$.  Note that $\lambda \ge 0$ since 
\begin{align*}
\lambda%&=\frac12\left[\sigma_{11}^2+\sigma_{12}^2-2(\sigma_{11}\sigma_{21}+\sigma_{12}\sigma_{22})+\sigma_{21}^2+\sigma_{22}^2\right]\\
%&=\frac12\left[\sigma_{11}^2-2\sigma_{11}\sigma_{21}+\sigma_{21}^2+\sigma_{12}^2-2\sigma_{12}\sigma_{22}+\sigma_{22}^2\right]\\
&=\frac12\left[(\sigma_{11}-\sigma_{21})^2+(\sigma_{12}-\sigma_{22})^2\right].
\end{align*}
Here we only consider the case when $\lambda\not=0$.
If $\lambda=0$, the problem reduces to a first order case and
can be treated accordingly.
%%%%%%%%%%%%%%%%%%%%%%%%%%%%%%%%%%%%%%%%%%%%%%%%%
%\[
%\fA v_i=x_1\left\{\frac{1}{2}\left[a_{11}-2a_{12}+a_{22}\right]y^2w''_i(y)+(\mu_2-\mu_1)y w'_i(y)+\mu_1w_i(y)\right\}.
%\]
The HJB equations can be given in terms of $y$ and $w_i$ as follows:
%%%%%%%%%%%%%%%%%%%%%%%%%%%%%%%%%%%%%%%%%%%%%%%%%
%\beq{}
%\begin{cases}
%\min\Big\{x_1\left(\rho w_0(y)-\mathcal{L} w_0(y)\right),\
%x_1(w_0(y)-w_1(y)+\Bb-\Bs y)\Big\}=0,\\
%\min\Big\{x_1\left(\rho w_1(y)-\mathcal{L} w_1(y)\right),\
%x_1(w_1(y)-\Bs+\Bb y)\Big\}=0,\\
%\end{cases}
%\eeq
%or equivalently
%%%%%%%%%%%%%%%%%%%%%%%%%%%%%%%%%%%%%%%%%%%%%%%%%
\beq{HJB2}
\begin{cases}
\min\Big\{(\rho-\mathcal{L}) w_0(y),\
w_0(y)-w_1(y)+\Bb  -\Bs  y\Big\}=0,\\
\min\Big\{(\rho-\mathcal{L}) w_1(y),\
w_1(y)-\Bs  +\Bb  y\Big\}=0.\\
\end{cases}
\eeq
%where 
%\[\L [w_i(y)]=\lambda y^2w''_i(y)+(\mu_2-\mu_1)y w'_i(y)+\mu_1w_i(y)\ \text{with}\ 
%\lambda=\frac{a_{11}-2a_{12}+a_{22}}{2}.\]
\par To solve the above HJB equations, we first consider the equations 
$(\rho-\mathcal{L})w_i(y)=0$, $i=0,1$, which can be rewritten as
%%%%%%%%%%%%%%%%%%%%%%%%%%%%%%%%%%%%%%%%%%%%%%%%%
$$
-\lambda y^2w''_i(y)-(\mu_2-\mu_1)yw'_i(y)+(\rho-\mu_1)w_i(y)=0.\\
$$
%%%%%%%%%%%%%%%%%%%%%%%%%%%%%%%%%%%%%%%%%%%%%%%%%
Clearly, these are the Euler equations and their solutions 
are of the form $y^{\delta}$, for some $\delta$.
Substitute this into the equation $(\rho-\mathcal{L}) w_i=0$ to obtain
%%%%%%%%%%%%%%%%%%%%%%%%%%%%%%%%%%%%%%%%%%%%%%%%%
%\begin{align*}
%&-\lambda y^2[\delta(\delta-1)y^{\delta-2}]-(\mu_2-\mu_1)y[\delta y^{\delta-1}]+(\rho-\mu_1) y^{\delta}=0\\
%\implies &-\lambda\delta(\delta-1)y^{\delta}-(\mu_2-\mu_1)\delta y^{\delta}+(\rho-\mu_1)y^{\delta}=0 \\
%\implies &\left[-\lambda\delta^2+\lambda\delta+(\mu_1-\mu_2)\delta+(\rho-\mu_1)\right]y^{\delta}=0\\
%\implies & \left[\delta^2-\delta-\left(\frac{\mu_1-\mu_2}{\lambda}\right)\delta-\frac{\rho-\mu_1}{\lambda}\right]y^{\delta}=0\\
%\implies & \left[\delta^2-\delta\left(1+\frac{\mu_1-\mu_2}{\lambda}\right)-\frac{\rho-\mu_1}{\lambda}\right]y^{\delta}=0.
%\end{align*}
%Then since $y^{\delta}\ne 0$, it must be that
%%%%%%%%%%%%%%%%%%%%%%%%%%%%%%%%%%%%%%%%%%%%%%%%%
$$
\delta^2-\left(1+\frac{\mu_1-\mu_2}{\lambda}\right) \delta-\frac{\rho-\mu_1}{\lambda}=0.
$$
This equation has two roots, $\delta_1$ and $\delta_2$, given by
%(by direct calculation $\delta_1>1$ and $\delta_2<0$) given by
%\beq{deltas}
\begin{align}\label{delta1}
\delta_1&=\frac{1}{2}\Biggl(1+\frac{\mu_1-\mu_2}{\lambda}+\sqrt{\(1+\frac{\mu_1-\mu_2}{\lambda}\)^2+\frac{4\rho-4\mu_1}{\lambda}}\,\Biggr),%>1,\\
\end{align}
\begin{align}\label{delta2}
\delta_2&=\frac{1}{2}\Biggl(1+\frac{\mu_1-\mu_2}{\lambda}-
\sqrt{\(1+\frac{\mu_1-\mu_2}{\lambda}\)^2+\frac{4\rho-4\mu_1}{\lambda}}\,\Biggr).%<0.
\end{align}
%\eeq
%%%%%%%%%%%%%%%%%%%%%%%%%%%%%%%%%%%%%%%%%%%%%%%%%
These roots are both real since we assume $\rho>\mu_1$.  We also assume $\rho>\mu_2$, and together these assumptions lead to $\delta_1>1$ and $\delta_2<0$.
%\begin{align*}
%&\indent \/\ \/\ \/\ \frac{\rho-\mu_2}{\lambda}>0>\frac{\mu_2-\rho}{\lambda}\\
%&\implies\frac{2\rho-\mu_2}{\lambda}>\frac{\mu_2}{\lambda}\\
%&\implies\frac{2\rho-\mu_1-\mu_2}{\lambda}>\frac{\mu_2-\mu_1}{\lambda}\\
%& \implies\frac{4\rho-2\mu_1-2\mu_2}{\lambda}>\frac{2\mu_2-2\mu_1}{\lambda}  \\
%&\implies 1+\left(\frac{\mu_1-\mu_2}{\lambda}\right)^2+\frac{2(\mu_1-\mu_2)}{\lambda}+\frac{4\rho-4\mu_1}{\lambda}>1+\left(\frac{\mu_1-\mu_2}{\lambda}\right)^2+\frac{2(\mu_2-\mu_1)}{\lambda} \\
%&\implies \left(1+\frac{\mu_1-\mu_2}{\lambda}\right)^2+\frac{4\rho-4\mu_1}{\lambda}>\left(1+\frac{\mu_2-\mu_1}{\lambda}\right)^2\\
%&\implies\sqrt{\left(1+\frac{\mu_1-\mu_2}{\lambda}\right)^2+\frac{4\rho-4\mu_1}{\lambda}}>\sqrt{\left(1+\frac{\mu_2-\mu_1}{\lambda}\right)^2} \\
%&\implies\sqrt{\left(1+\frac{\mu_1-\mu_2}{\lambda}\right)^2+\frac{4\rho-4\mu_1}{\lambda}}>\left|1+\frac{\mu_2-\mu_1}{\lambda}\right|=\left|1-\frac{\mu_1-\mu_2}{\lambda}\right|\ge1-\frac{\mu_1-\mu_2}{\lambda}\\
%&\implies 1+\frac{\mu_1-\mu_2}{\lambda}+\sqrt{\left(1+\frac{\mu_1-\mu_2}{\lambda}\right)^2+\frac{4\rho-4\mu_1}{\lambda}}>2 \\
%&\implies\frac12 \left(1+\frac{\mu_1-\mu_2}{\lambda}+\sqrt{\left(1+\frac{\mu_1-\mu_2}{\lambda}\right)^2+\frac{4\rho-4\mu_1}{\lambda}}\right)>1 \\
%&\implies \delta_1>1. 
%\end{align*}
%Also, since 
%\begin{align*}
%1+\frac{\mu_1-\mu_2}{\lambda}\le \left|1+\frac{\mu_1-\mu_2}{\lambda}\right|&=\sqrt{\left(1+\frac{\mu_1-\mu_2}{\lambda}\right)^2}\\
%&<\sqrt{\left(1+\frac{\mu_1-\mu_2}{\lambda}\right)^2+\frac{4\rho-4\mu_1}{\lambda}},
%\end{align*}
%we must have $\delta_2<0$.  

We conclude that the general solution of $(\rho-\mathcal{L})w_i(y)=0$ should be of the form:
$w_i(y)=c_{i1}y^{\delta_1}+c_{i2}y^{\delta_2}$,
for some constants $c_{i1}$ and $c_{i2}$, $i=0,1$.  
Note that as $y\to 0$, $y^{\delta_2}\to \infty$, and as $y \to \infty$, $y^{\delta_1}\to\infty$. %%%%
Also note the following identities in $\delta_1$ and $\delta_2$:
\begin{align}
&-\delta_1\delta_2%&=-\frac{1}{2}\Biggl(1+\frac{\mu_1-\mu_2}{\lambda}+\sqrt{\(1+\frac{\mu_1-\mu_2}{\lambda}\)^2+\frac{4\rho-4\mu_1}{\lambda}}\,\Biggr)\\
%&\indent \/\ \/\ \/\ \cdot\frac{1}{2}\Biggl(1+\frac{\mu_1-\mu_2}{\lambda}-\sqrt{\(1+\frac{\mu_1-\mu_2}{\lambda}\)^2+\frac{4\rho-4\mu_1}{\lambda}}\,\Biggr)\\
%&=-\frac14\left[\left(1+\frac{\mu_1-\mu_2}{\lambda}\right)^2-\left(1+\frac{\mu_1-\mu_2}{\lambda}\right)^2-\frac{4\rho-4\mu_1}{\lambda}\right]\\
=\frac{\rho-\mu_1}{\lambda},\\
%\end{align*}
%\begin{align*}
&\delta_1+\delta_2%&=\frac{1}{2}\Biggl(1+\frac{\mu_1-\mu_2}{\lambda}+\sqrt{\(1+\frac{\mu_1-\mu_2}{\lambda}\)^2+\frac{4\rho-4\mu_1}{\lambda}}\,\Biggr)\\
%&\indent+\frac{1}{2}\Biggl(1+\frac{\mu_1-\mu_2}{\lambda}-\sqrt{\(1+\frac{\mu_1-\mu_2}{\lambda}\)^2+\frac{4\rho-4\mu_1}{\lambda}}\,\Biggr) \\
=1+\frac{\mu_1-\mu_2}{\lambda},\\
%\end{align*}
%\begin{align*}
&(\delta_1-1)(1-\delta_2)%&=\delta_1-\delta_1\delta_2-1+\delta_2=\delta_1+\delta_2-1-\delta_1\delta_2\\
%&=1+\frac{\mu_1-\mu_2}{\lambda}-1+\frac{\rho-\mu_1}{\lambda}\\
=\frac{\rho-\mu_2}{\lambda},\\
%\end{align*}
%\begin{align*}
&\frac{-\delta_1\delta_2}{(\delta_1-1)(1-\delta_2)}%&=\frac{\rho-\mu_1}{\lambda}\cdot\frac{\lambda}{\rho-\mu_2}\\
=\frac{\rho-\mu_1}{\rho-\mu_2}.
\end{align}
Now, the second part of the HJB equation 
\[\min\Big\{(\rho-\mathcal{L}) w_1(y),\
w_1(y)-\Bs  +\Bb  y\Big\}=0\]
is independent of $w_0$ and can be solved first.  We must find thresholds $k_1$ and $k_2$ for buying and selling, as in \cite{JOptim}.

\begin{figure}
\includegraphics[width=0.75\textwidth]{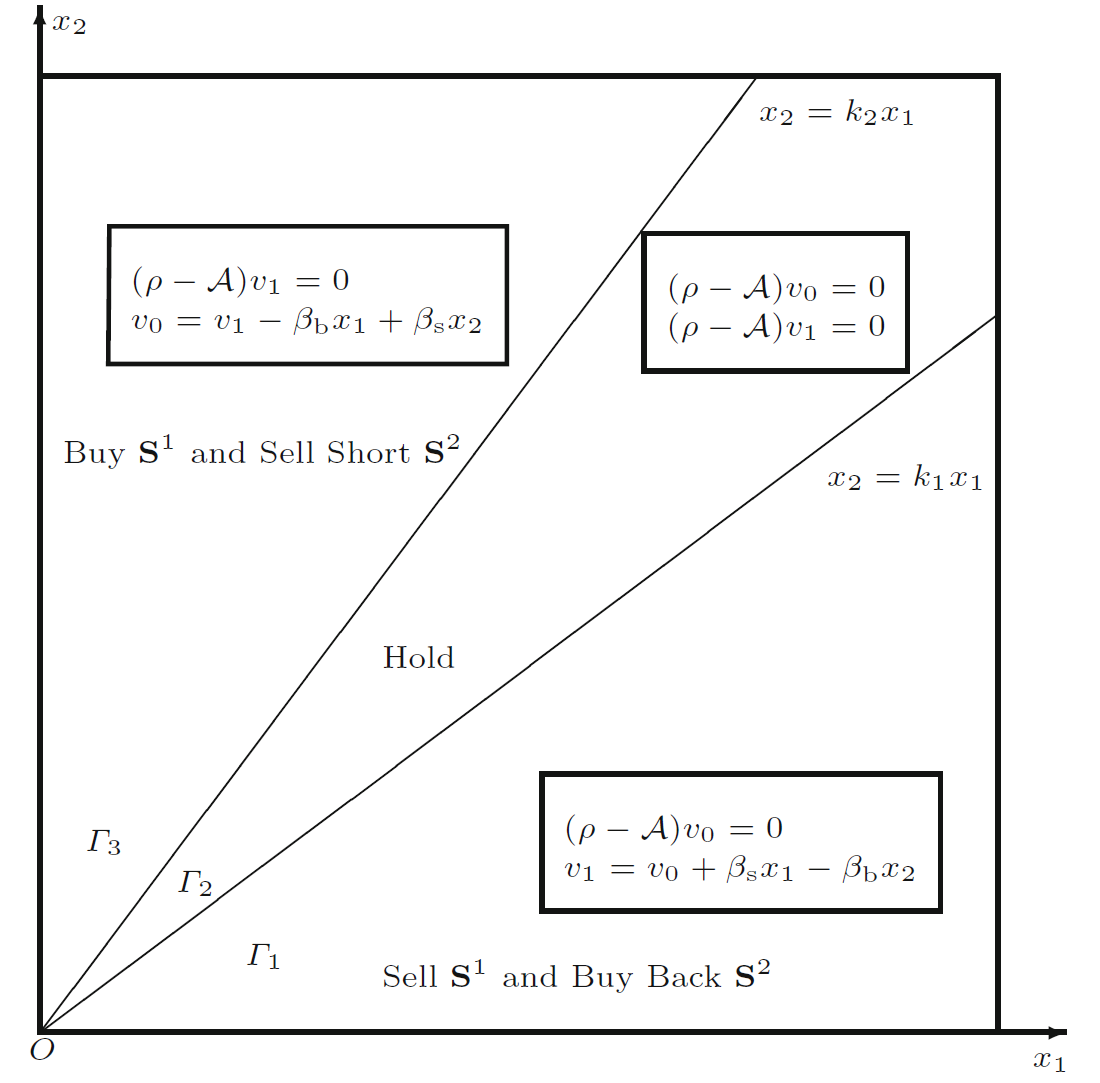}
\caption{Thresholds for buying and selling regions}
\end{figure}

 First, we need to find $k_1$ so that 
on the interval $[0,k_1]$, $w_1(y)=\Bs-\Bb y$, and on the interval $[k_1, \infty)$,
$w_1(y)=C_2y^{\delta_2}$. 
Then the smooth-fit conditions determine $k_1$ and $C_2$.
Necessarily, the continuity of $w_1$ and its first order derivative at 
$y=k_1 $ imply
\[
\Bs-\Bb k_1 =C_2k_1^{\delta_2}\quad \text{and}\quad
-\Bb =C_2\delta_2 k_1^{\delta_2-1}.\]
%%%%%%%%%%%%%%%%%%%%%%%%%%%%%%%%%%%%%%%%%%%%%%%%%
From this system of equations, we can see
\begin{align}
%&-\frac{\Bb}{\delta_2}\cdot k_2=C_2{k_2}^{\delta_2}=\Bs-\Bb k_2\\
%\implies & \left(-\frac{\Bb}{\delta_2}+\frac{\Bb\delta_2}{\delta_2}\right)k_2=\Bs\\
%\implies & \frac{-1+\delta_2}{\delta_2}\cdot k_2=\frac{\Bs}{\Bb}\\
%\implies 
& k_1=\frac{\Bs}{\Bb}\cdot\frac{-\delta_2}{1-\delta_2}.
\end{align}
and 
\begin{align}
%&C_2=\frac{\Bb}{-\delta_2}{k_2}^{1-\delta_2}\\
%\implies & 
&C_2=\frac{\Bb}{-\delta_2}\left(\frac{\Bs}{\Bb}\cdot\frac{-\delta_2}{1-\delta_2}\right)^{1-\delta_2}
%\implies & C_2=\frac{{\Bs}^{1-\delta_2}}{(1-\delta_2)^{1-\delta_2}}\cdot\frac{\Bb}{-\delta_2}\cdot\frac{{\Bb}^{\delta_2-1}}{(-\delta_2)^{\delta_2-1}} \\
%\implies 
=\left(\frac{\Bs}{1-\delta_2}\right)^{1-\delta_2}\left(\frac{\Bb}{-\delta_2}\right)^{\delta_2}.
\end{align}
%%%%%%%%%%%%%%%%%%%%%%%%%%%%%%%%%%%%%%%%%%%%%%%%%
%We can solve this system to get
%\[k_2=\frac{-\delta_2}{1-\delta_2} \cdot \frac{\Bs}{\Bb}\quad \text{and}\quad
%C_2=\left(\frac{\Bs}{1-\delta_2}\right)^{1-\delta_2} \cdot \left(\frac{\Bb}{-\delta_2}\right)^{\delta_2},\]
We obtain the function
\beq{}
w_1(y)=\begin{cases} \Bs-\Bb y, \/\ \/\ &  \text{if   }\/\  y<k_1\\
C_2 y^{\delta_2},\/\ \/\ & \text{if   }\/\ y\geq k_1
\end{cases}
\eeq
with $k_1$ and $C_2$ given above. Next we need to solve the first part of HJB equation:
\[\min\Big\{(\rho -\mathcal{L}) w_0(y),\
w_0(y)-w_1(y)+\Bb  -\Bs  y\Big\}=0.\]
We need to find $k_2$ so that on the interval $[0,k_2]$,
$w_0(y)=C_1y^{\delta_1}$, and on the interval $[k_2,\infty),\\
w_0(y)=w_1(y)-\Bb+\Bs y=C_2y^{\delta_2}-\Bb+\Bs y$.
%(Here we implicitly assume that $k_1>k_2$ and we need to prove this as what we have done in JOA paper)
Then the continuity of $w_0$ and its first order derivative at $y=k_2$ yield
\[C_1k_2^{\delta_1}= C_2k_2^{\delta_2}-\Bb+\Bs k_2 \quad\text{and}\quad
C_1\delta_1 k_2^{\delta_1-1}= C_2\delta_2 k_2^{\delta_2-1}+\Bs.\]
Take the ratio of the above two equations and get
\[\frac{k_2}{\delta_1}=\frac{C_2k_2^{\delta_2}-\Bb+\Bs k_2}{C_2\delta_2 k_2^{\delta_2-1}+\Bs}.\]
%%%%%%%%%%%%%%%%%%%%%%%%%%%%%%%%%%%%%%%%%%%%%%%%%
This implies 
\begin{align*}
%&  k_1[C_2\delta_2k_1^{\delta_2-1}+\Bs]=\delta_1[C_2k_1^{\delta_2}-\Bb+\Bs k_1]\\
%\implies & \delta_1C_2k_1^{\delta_2}-\delta_2C_2k_1^{\delta_2}+\delta_1\Bs k_1-\Bs k_1-\Bb\delta_1=0\\
%\implies &
C_2(\delta_1-\delta_2)k_2^{\delta_2}+\Bs(\delta_1-1)k_2-\Bb\delta_1=0.
\end{align*}
%%%%%%%%%%%%%%%%%%%%%%%%%%%%%%%%%%%%%%%%%%%%%%%%%
This is an equation of $k_2$:
\beq[
f(k_2):=C_2(\delta_1-\delta_2)k_2^{\delta_2}+\Bs (\delta_1-1)k_2-\Bb\delta_1 =0.
\eeq
%We need to solve this equation to get $k_1$ (or show that the above equation has a solution). 
%%%%%%%%%%%%%%%%%%%%%%%%%%%%%%%%%%%%%%%%%%%%%%%%%
Consider 
\[f(y):=C_2(\delta_1-\delta_2)y^{\delta_2}+\Bs (\delta_1-1)y-\Bb\delta_1. \]
Note that as $y\to\infty$, $f(y)\to\Bs(\delta_1-1)y-\Bb\delta_1$, since $\delta_2<0$.  That is, as $y\to\infty$, $f(y)\to\infty$, since $\Bs>0$, $\delta_1-1>0$.  Also, as $y\to 0^+$,
 $f(y)\to C_2(\delta_1-\delta_2)y^{\delta_2}-\Bb\delta_1$.  That is, as $y\to 0^+$, $f(y)\to\infty$, since $C_2>0$, $\delta_1-\delta_2>0$, and $\delta_2<0$.  Now, 
\begin{align*}
&\indent f'(y)=C_2\delta_2(\delta_1-\delta_2)y^{\delta_2-1}+\Bs(\delta_1-1)\\
&\indent f''(y)=C_2\delta_2(\delta_2-1)(\delta_1-\delta_2)y^{\delta_2-2}=C_2(-\delta_2)(1-\delta_2)(\delta_1-\delta_2)y^{\delta_2-2}.
\end{align*}
Note then that $f''(y)>0$ for all $y>0$ since $C_2>0$, $(-\delta_2)>0$, $(1-\delta_2)>0$, and $(\delta_1-\delta_2)>0$.
Hence $f$ is convex for all $y>0$.  Also note that
\begin{align*}
f'(y)=0%&\iff C_2\delta_2(\delta_1-\delta_2)y^{\delta_2-1}+\Bs(\delta_1-1)=0\\
%&\iff y^{\delta_2-1}=\frac{\Bs(\delta_1-1)}{C_2(-\delta_2)(\delta_1-\delta_2)}\\
&\iff y=\left[\frac{\Bs(\delta_1-1)}{C_2(-\delta_2)(\delta_1-\delta_2)}\right]^{\frac1{\delta_2-1}}.
%&\iff y=\left[\frac{C_2(-\delta_2)(\delta_1-\delta_2)}{\Bs(\delta_1-1)}\right]^{\frac1{1-\delta_2}}>0.
\end{align*}
Hence $f$ attains its global minimum at $\displaystyle{y_c=\left[\frac{\Bs(\delta_1-1)}{C_2(-\delta_2)(\delta_1-\delta_2)}\right]^{\frac1{\delta_2-1}}}>0$. We will show that $f(y)=0$ has two solutions and take the larger one to be $k_2$. Since we already know $C_2$, once we find $k_2$, we can express $C_1$ using the relationship above:
\begin{align}
C_1= \frac{C_2\delta_2 k_2^{\delta_2-1}+\Bs}{\delta_1 k_2^{\delta_1-1}}&=\left(\frac{\Bs}{\Bb}\cdot\frac{-\delta_2}{1-\delta_2}\right)^{1-\delta_2}\frac{\Bb}{-\delta_2}\frac{\delta_2k_2^{\delta_2-1}}{\delta_1k_2^{\delta_1-1}}+\frac{\Bs}{\delta_1k_2^{\delta_1-1}}\\
%&=-\left(\frac{\Bs}{\Bb}\right)^{-\delta_2}\left(\frac{-\delta_2}{1-\delta_2}\right)^{1-\delta_2}\left(\frac{\Bs}{\delta_1}\right)\frac{k_1^{\delta_2-1}}{k_1^{\delta_1-1}}+\frac{\Bs}{\delta_1k_1^{\delta_1-1}}\\
&=\left[1-\left(\frac{\Bs}{\Bb}\right)^{-\delta_2}\left(\frac{-\delta_2}{1-\delta_2}\right)^{1-\delta_2}k_2^{\delta_2-1}\right]\left(\frac{\Bs}{\delta_1}\right)k_2^{1-\delta_1}\notag.
\end{align}
%%%%%%%%%%%%%%%%%%%%%%%%%%%%%%%%%%%%%%%%%%%%%%%%%
%Need to follow what we have done in JOA paper and should be simpler.
%Simple calculation yields $\lim_{y\to 0^{+}}f(y)=\lim_{y\to\infty} f(y)=\infty$ and
%\[ f'(y)=C_2(\delta_1-\delta_2)\delta_2y^{\delta_2-1}+\Bs (\delta_1-1)\quad\text{and}\quad
%f''(y)=C_2(\delta_1-\delta_2)\delta_2(\delta_2-1)y^{\delta_2-2}\]
%It is easy to show that $f(y)$ is convex for $y>0$
%(note that $\delta_2<0<1<\delta_1$) and attains its global minimum at
%\[y_c=\left[\frac{\Bs (\delta_1-1)}{-\delta_2C_2(\delta_1-\delta_2)}\right]^{\frac{1}{\delta_2-1}}.\]
% Here are what need to be done:
%\begin{itemize}
%\item[(a)] The existence of $k_1$ and $k_2<k_1$.
%\item[(b)] Show that $C_1>0$ and $C_2>0$.
%\item[(c)] Prove the inequality parts of the HJB equations.
%\end{itemize}
We show that $f(y_c)<0$, thus implying the existence of $k_2$. We compute $f(y_c)$:
\begin{align*}
f(y_c) &=C_2(\delta_1-\delta_2)y_c^{\delta_2}+\Bs (\delta_1-1)y_c-\Bb\delta_1\\
%&=C_2(\delta_1-\delta_2) \left[\frac{\Bs (\delta_1-1)}{-\delta_2C_2(\delta_1-\delta_2)}\right]^{\frac{\delta_2}{\delta_2-1}} +\Bs (\delta_1-1)
%\left[\frac{\Bs (\delta_1-1)}{-\delta_2C_2(\delta_1-\delta_2)}\right]^{\frac{1}{\delta_2-1}}-\Bb\delta_1\\
%&= C_2^{\frac{1}{1-\delta_2}} (\delta_1-\delta_2)^{\frac{1}{1-\delta_2}}[\Bs (\delta_1-1)]^{\frac{\delta_2}{\delta_2-1}}(-\delta_2)^{-\frac{\delta_2}{\delta_2-1}}+
%\frac{[\Bs(\delta_1-1)]^{\frac{\delta_2}{\delta_2-1}}}{[-\delta_2C_2(\delta_1-\delta_2)]^{\frac{1}{\delta_2-1}}}-\beta_b \delta_1\\
&=C_2^{\frac{1}{1-\delta_2}} (\delta_1-\delta_2)^{\frac{1}{1-\delta_2}}[\Bs (\delta_1-1)]^{\frac{\delta_2}{\delta_2-1}}[ (-\delta_2)^{-\frac{\delta_2}{\delta_2-1}}
+(-\delta_2)^{\frac{1}{1-\delta_2}}]-\Bb\delta_1.
\end{align*}
Next we insert $\displaystyle{C_2=\left(\frac{\Bs}{1-\delta_2}\right)^{1-\delta_2} \cdot \left(\frac{\Bb}{-\delta_2}\right)^{\delta_2}}$ into $f(y_c)$ to get
\begin{align*}
f(y_c) &= \left(\frac{\Bs}{1-\delta_2}\right) \left(\frac{\Bb}{-\delta_2}\right)^{\frac{\delta_2}{1-\delta_2}} (\delta_1-\delta_2)^{\frac{1}{1-\delta_2}}[\Bs (\delta_1-1)]^{\frac{\delta_2}{\delta_2-1}}[ (-\delta_2)^{-\frac{\delta_2}{\delta_2-1}}+(-\delta_2)^{\frac{1}{1-\delta_2}}]-
\Bb\delta_1\\
%&=\Bs^{1+\frac{\delta_2}{\delta_2-1}}\Bb^{\frac{\delta_2}{1-\delta_2}} \frac{ (\delta_1-\delta_2)^{\frac{1}{1-\delta_2}} (\delta_1-1)^{\frac{\delta_2}{\delta_2-1}}[
%(-\delta_2)^{-\frac{\delta_2}{\delta_2-1}}+(-\delta_2)^{\frac{1}{1-\delta_2}}]}{ (1-\delta_2)(-\delta_2)^{\frac{\delta_2}{1-\delta_2}}}-
%\Bb\delta_1\\
%&=\Bb \left[ \left(\frac{\Bs}{\Bb}\right)^{1+\frac{-\delta_2}{1-\delta_2}} \frac{ (\delta_1-\delta_2)^{\frac{1}{1-\delta_2}} %(\delta_1-1)^{\frac{\delta_2}{\delta_2-1}}[
%(-\delta_2)^{-\frac{\delta_2}{\delta_2-1}}+(-\delta_2)^{\frac{1}{1-\delta_2}}]}{ (1-\delta_2)(-\delta_2)^{\frac{\delta_2}{1-%\delta_2}}}-
%\delta_1\right]\\
&=\Bb \left[ \left(\frac{\Bs}{\Bb}\right)^{1+\frac{-\delta_2}{1-\delta_2}} (\delta_1-\delta_2)^{\frac{1}{1-\delta_2}} (\delta_1-1)^{\frac{\delta_2}{\delta_2-1}}
-\delta_1\right].
\end{align*}
Since $\delta_2<0$, we let $\delta_2=-r$ with $r>0$ and $\displaystyle{\beta=\frac{\beta_b}{\beta_s}>1}$. This will imply
\begin{align*}
f(y_c) &=\Bb \left[ \left(\frac{\Bs}{\Bb}\right)^{1+\frac{r}{1+r}} (\delta_1+r)^{\frac{1}{1+r}} (\delta_1-1)^{\frac{r}{1+r}}-\delta_1\right]\\
&=\Bb \delta_1\left[ \beta^{-1-\frac{r}{1+r}} \left(1+\frac{r}{\delta_1}\right)^{\frac{1}{1+r}} \left(1-\frac{1}{\delta_1}\right)^{\frac{r}{1+r}}
-1\right].
\end{align*}
The necessary and sufficient condition for the existence of $k_2$ is $f(y_c) \leq 0$, and this is equivalent to
%%%%%%%%%%%%%%%%%%%%%%%%%%%%%%%%%%%%%%%%%%%%%%%%%%%%%%%%%%%%%5
\[  \left(1+\frac{r}{\delta_1}\right)^{\frac{1}{1+r}} \left(1-\frac{1}{\delta_1}\right)^{\frac{r}{1+r}}
\leq \beta^{\frac{1+2r}{1+r}}. \]
We apply the geometric-arithmetic mean inequality
\[ A^{\theta}B^{1-\theta}\leq \theta A+(1-\theta)B\ \text{with}\ \theta=\frac{1}{1+r},\ A=1+\frac{r}{\delta_1}\ \text{and}\ B=1-\frac{1}{\delta_1}\]
to the left hand side of the above inequality to get
\[\left(1+\frac{r}{\delta_1}\right)^{\frac{1}{1+r}} \left(1-\frac{1}{\delta_1}\right)^{\frac{r}{1+r}}\leq \left(1+\frac{r}{\delta_1}\right) \cdot {\frac{1}{1+r}} +
 \left(1-\frac{1}{\delta_1}\right)\cdot {\frac{r}{1+r}} =1.\]
This implies $f(y_c)\leq 0$ if 
\[ 1<\beta^{\frac{1+2r}{1+r}} \quad  \Longleftrightarrow \quad
1 <\beta.\]
 This obviously holds  since $\beta>1$.  So we establish the existence of $k_2$. 

Note that it is clear that $C_2>0$.  We also wish to establish $C_1>0$.  Consider,
\begin{align*}
C_1>0 &\iff C_2\delta_2k_2^{\delta_2-1}+\Bs>0\\
&\iff \left(\frac{\Bs}{\Bb}\cdot\frac{-\delta_2}{1-\delta_2}\right)^{1-\delta_2}\frac{\Bb}{-\delta_2}\delta_2 k_2^{\delta_2-1}+\Bs>0\\
%&\iff \left(\frac{\Bs}{\Bb}\right)^{-\delta_2}\left(\frac{-\delta_2}{1-\delta_2}\right)^{1-\delta_2}\frac{\Bs}{\Bb}\cdot\frac{\Bb}{-\delta_2}\delta_2k_1^{\delta_2-1}+\Bs>0\\
%&\iff -\left(\frac{\Bs}{\Bb}\right)^{-\delta_2}\left(\frac{-\delta_2}{1-\delta_2}\right)^{1-\delta_2}\Bs k_1^{\delta_2-1}+\Bs>0\\
%&\iff \Bs>\left(\frac{\Bs}{\Bb}\right)^{-\delta_2}\left(\frac{-\delta_2}{1-\delta_2}\right)^{1-\delta_2}\Bs k_2^{\delta_2-1}\\
%&\iff 1>\left(\frac{\Bs}{\Bb}\right)^{-\delta_2}\left(\frac{-\delta_2}{1-\delta_2}\right)^{1-\delta_2} k_2^{\delta_2-1}\\
%&\iff k_1^{1-\delta_2}>\left(\frac{\Bs}{\Bb}\right)^{-\delta_2}\left(\frac{-\delta_2}{1-\delta_2}\right)^{1-\delta_2}\\
&\iff k_2> \left(\frac{\Bs}{\Bb}\right)^{\frac{-\delta_2}{1-\delta_2}}\left(\frac{-\delta_2}{1-\delta_2}\right).
\end{align*}
Note then that if $\displaystyle{f\left(\left(\frac{\Bs}{\Bb}\right)^{\frac{-\delta_2}{1-\delta_2}}\left(\frac{-\delta_2}{1-\delta_2}\right)\right)}<0$, we establish $\displaystyle{C_1>0}$.\\
%\allowdisplaybreaks
$f\left(\left(\dfrac{\Bs}{\Bb}\right)^{\frac{-\delta_2}{1-\delta_2}}\left(\dfrac{-\delta_2}{1-\delta_2}\right)\right)$
\begin{align*}
&=C_2(\delta_1-\delta_2)\left[\left(\frac{\Bs}{\Bb}\right)^{\frac{-\delta_2}{1-\delta_2}}\left(\frac{-\delta_2}{1-\delta_2}\right)\right]^{\delta_2}%\\
%&\indent 
+\Bs(\delta_1-1)\left(\frac{\Bs}{\Bb}\right)^{\frac{-\delta_2}{1-\delta_2}}\left(\frac{-\delta_2}{1-\delta_2}\right)-\Bb\delta_1\\
&=\Bb\delta_1\left[\left(\frac{\Bs}{\Bb}\right)^{1+\frac{-\delta_2}{1-\delta_2}}-1\right]\\
&<0
\end{align*}
since $\displaystyle{\left(\frac{\Bs}{\Bb}\right)^{1+\frac{-\delta_2}{1-\delta_2}}<\frac{\Bs}{\Bb}<1}$.  Hence we have shown that $C_1>0$.
%%%%%%%%%%%%%%%%%%%%%%%%%%%%%%%%%%%%%%%%%%%%%%%%%
Now we consider the following regions:
\[
\begin{array}{l}
\disp
\Gamma_1=(0,k_1]\\
\disp
\Gamma_2=(k_1,k_2)\\
\disp
\Gamma_3=[k_2,\infty)\\
\end{array}
\]
We have chosen $k_1$, $k_2$ such that we establish the following equalities
\[
\begin{array}{l}
\disp
\Gamma_1: \/\ \/\ w_1(y)-\Bs+\Bb y= 0 \/\ , \\
\/\  (\rho-\mathcal{L})w_0(y)=0\\
\disp
\\
\Gamma_2:\/\ \/\ (\rho-\mathcal{L})w_1(y)=0 \/\ ,\\
 \/\ (\rho-\mathcal{L})w_0(y)=0\\
\disp
\\
\Gamma_3:\/\ \/\ (\rho-\mathcal{L})w_1(y)=0 \/\ , \\
\/\ w_0(y)-w_1(y)+\Bb-\Bs y=0\\
\end{array}
\]
for solutions of the form 
\[ w_0(y)=\begin{cases} 
      C_1y^{\delta_1} & y\in\Gamma_1 \\
      C_1y^{\delta_1} & y\in\Gamma_2 \\
      C_2y^{\delta_2}-\Bb+\Bs y & y\in\Gamma_3
   \end{cases},
\]
\[ w_1(y)=\begin{cases} 
      \Bs-\Bb y & y\in\Gamma_1 \\
      C_2y^{\delta_2} & y\in\Gamma_2 \\
      C_2y^{\delta_2} & y\in\Gamma_3
   \end{cases}.
\]
We now proceed to establish the following variational inequalities, thus confirming that we have solved the HJB equation:
\[
\begin{array}{l}
\disp
\Gamma_1: \/\ \/\ (\rho-\mathcal{L})w_1(y) \ge 0,\\
\disp
 w_0(y)-w_1(y)+\Bb-\Bs y\ge 0\\
\\
\disp
\Gamma_2:\/\ \/\ w_1(y)-\Bs+\Bb y\ge 0 ,  \\
\disp
 w_0(y)-w_1(y)+\Bb-\Bs y\ge 0  \\
\\
\disp
\Gamma_3:\/\ \/\  w_1(y)-\Bs+\Bb y \ge 0 , \\
\disp
   (\rho-\mathcal{L})w_0(y)\ge 0  .  \\
\end{array}
\]
\\
\textbf{\underline{$y\in \Gamma_1$:}} Using $(\rho-\mathcal{L})w_0(y)=0$ and $w_1(y)=\Bs-\Bb y$, we obtain
\begin{align*}
w_0(y)-w_1(y)+\Bb-\Bs y &=C_1y^{\delta_1}-\Bs+\Bb y+\Bb-\Bs y\\
&=C_1y^{\delta_1}+(\Bb-\Bs)(y+1)\\
&\ge 0
\end{align*}
since $C_1>0$, $\Bb>\Bs$, and $y>0$.  \\
Also,
\begin{align*}
(\rho-\mathcal{L})w_1(y) & = (\rho-\L)(\Bs-\Bb y)\\
%&=\rho\Bs-\rho\Bb y+\L\Bb y-\L\Bs\\
%&=\rho\Bs-\rho\Bb y+\mu_2\Bb y-\mu_1\Bs\\
&=(\rho-\mu_1)\Bs-(\rho-\mu_2)\Bb y\\
\implies (\rho-\mathcal{L})w_1(y)\ge 0 &\iff (\rho-\mu_1)\Bs-(\rho-\mu_2)\Bb y\ge 0\\
%&\iff (\rho-\mu_1)\Bs\ge(\rho-\mu_2)\Bb y\\
&\iff \frac{(\rho-\mu_1)\Bs}{(\rho-\mu_2)\Bb}\ge y\\
&\iff \frac{(\rho-\mu_1)\Bs}{(\rho-\mu_2)\Bb}\ge k_1
\end{align*}
since $k_1\ge y$ for all $y\in\Gamma_1$. But note that
\begin{align*}
\frac{(\rho-\mu_1)\Bs}{(\rho-\mu_2)\Bb}\ge k_1&\iff \frac{-\delta_1\delta_2}{(\delta_1-1)(1-\delta_2)}\cdot\frac{\Bs}{\Bb}\ge k_1\\
&\iff \frac{\delta_1}{(\delta_1-1)}\cdot k_1\ge k_1,
\end{align*}
which obviously holds since $\delta_1>\delta_1-1>0$.  Thus we have established the variational inequalities for the region $\Gamma_1$.\\
\\
\textbf{\underline{$y\in \Gamma_3$:}} Using $(\rho-\mathcal{L})w_1(y)=0$ and $w_1(y)=w_0(y)+\Bb-\Bs y$, we obtain
\begin{align*}
w_1(y)-\Bs+\Bb y &=w_0(y)+\Bb-\Bs y-\Bs+\Bb y\\
&=C_2y^{\delta_2}-\Bb+\Bs y+\Bb-\Bs y-\Bs+\Bb y\\
&=C_2y^{\delta_2}+\Bb y-\Bs
\end{align*}
Note that the continuity of $w_1$ and $w'_1$ at $k_1$ ensure that 
\[
\begin{array}{l}
\disp C_2k_1^{\delta_2}+\Bb k_1-\Bs=0\\
\disp C_2\delta_2k_1^{\delta_2-1}+\Bb=0.\\
\end{array}
\]
Let $g(y)=C_2y^{\delta_2}+\Bb y-\Bs$.  Then $g'(y)=C_2\delta_2y^{\delta_2-1}+\Bb$.  Note that
\begin{align*}
g'(y)\ge 0\iff C_2\delta_2y^{\delta_2-1}+\Bb\ge0 %& \iff \Bb\ge C_2(-\delta_2)y^{\delta_2-1}\\
%&\iff \frac{\Bb}{C_2(-\delta_2)}\ge y^{\delta_2-1}\\
&\iff \frac{C_2(-\delta_2)}{\Bb}\le y^{1-\delta_2}\\
&\iff k_1^{1-\delta_2}\le y^{1-\delta_2}\\
&\iff k_1\le y.
\end{align*}
Thus $g(y)=C_2y^{\delta_2}+\Bb y-\Bs$ is increasing for all $y\ge k_1$.  In particular, since $C_2k_1^{\delta_2}+\Bb k_1-\Bs=0$, we must have $C_2y^{\delta_2}+\Bb y-\Bs\ge 0$ for all $y\ge k_1$.  Thus $C_2y^{\delta_2}+\Bb y-\Bs=w_1(y)-\Bs+\Bb y\ge 0$ for all $y\in \Gamma_2\cup\Gamma_3$.\\
Also,
\begin{align*}
(\rho-\mathcal{L})w_0(y)%&=(\rho-\mathcal{L})(w_1(y)+\Bs y-\Bb)\\
&=(\rho-\mathcal{L})(w_1(y))+(\rho-\mathcal{L})(\Bs y-\Bb)\\
%&=0+\rho\Bs y-\rho\Bb+\mathcal{L}\Bb-\mathcal{L}\Bs y\\
&=\rho\Bs y-\rho\Bb+\mu_1\Bb-\mu_2\Bs y\\
&=(\rho-\mu_2)\Bs y-(\rho-\mu_1)\Bb.
\end{align*}
Hence 
\begin{align*}
(\rho-\mathcal{L})w_0(y)\ge 0&\iff (\rho-\mu_2)\Bs y-(\rho-\mu_1)\Bb\ge 0\\
%&\iff (\rho-\mu_2)\Bs y\ge(\rho-\mu_1)\Bb\\
&\iff y\ge \frac{(\rho-\mu_1)\Bb}{(\rho-\mu_2)\Bs}\\
&\iff k_2\ge\frac{(\rho-\mu_1)\Bb}{(\rho-\mu_2)\Bs}
\end{align*}
since $k_2\le y$ for all $y\in\Gamma_3$.  Note that $\displaystyle{\frac{(\rho-\mu_1)\Bb}{(\rho-\mu_2)\Bs}=\frac{-\delta_1\delta_2}{(\delta_1-1)(1-\delta_2)}\cdot\frac{\Bb}{\Bs}}$ and consider\\
$\displaystyle f\left(\frac{-\delta_1\delta_2}{(\delta_1-1)(1-\delta_2)}\cdot\frac{\Bb}{\Bs}\right)$
\begin{align*}
&=C_2(\delta_1-\delta_2)\left(\frac{-\delta_1\delta_2}{(\delta_1-1)(1-\delta_2)}\cdot\frac{\Bb}{\Bs}\right)^{\delta_2}%\\
%&\indent 
+\Bs(\delta_1-1)\left(\frac{-\delta_1\delta_2}{(\delta_1-1)(1-\delta_2)}\cdot\frac{\Bb}{\Bs}\right)-\Bb\delta_1\\
%&=\left(\frac{\Bs}{\Bb}\cdot\frac{-\delta_2}{1-\delta_2}\right)^{1-\delta_2}\left(\frac{\Bb}{-\delta_2}\right)(\delta_1-\delta_2)\left(\frac{\delta_1}{\delta_1-1}\cdot\frac{-\delta_2}{1-\delta_2}\cdot\frac{\Bb}{\Bs}\right)^{\delta_2}%\\
%&\indent 
%+\Bb\left(\frac{-\delta_1\delta_2}{1-\delta_2}\right)-\Bb\delta_1\\
&=\frac{\delta_1-\delta_2}{1-\delta_2}\left(\frac{\delta_1}{\delta_1-1}\right)^{\delta_2}\beta^{2\delta_2-1}\Bb+\Bb\delta_1\left(\frac{-\delta_2}{1-\delta_2}-1\right).
\end{align*}
Now, let $\delta_2=-r$ with $r>0$.  Then
\begin{align*}
f\left(\frac{-\delta_1\delta_2}{(\delta_1-1)(1-\delta_2)}\cdot\frac{\Bb}{\Bs}\right)&=\left(\frac{\delta_1+r}{1+r}\right)\left(\frac{\delta_1-1}{\delta_1}\right)^r\beta^{-2r-1}\Bb+\Bb\delta_1\left(\frac{r}{1+r}-1\right).
\end{align*}
Hence
\begin{align*}
f\left(\frac{-\delta_1\delta_2}{(\delta_1-1)(1-\delta_2)}\cdot\frac{\Bb}{\Bs}\right)<0&\iff\left(\frac{\delta_1+r}{1+r}\right)\left(\frac{\delta_1-1}{\delta_1}\right)^r\beta^{-2r-1}\Bb<\Bb\delta_1\left(\frac{-r+1+r}{1+r}\right)\\
%&\iff\left(\frac{r+1}{\delta_1}\right)\left(\frac{\delta_1+r}{1+r}\right)\left(\frac{\delta_1-1}{\delta_1}\right)^r<\beta^{2r+1}\\
%&\iff\left(\frac{\delta_1+r}{\delta_1}\right)^{\frac{1}{r+1}}\left(\frac{\delta_1-1}{\delta_1}\right)^{\frac{r}{r+1}}<\beta^{\frac{2r+1}{r+1}}\\
&\iff\left(1+\frac{r}{\delta_1}\right)^{\frac{1}{r+1}}\left(1-\frac{1}{\delta_1}\right)^{\frac{r}{r+1}}<\beta^{\frac{2r+1}{r+1}}.
\end{align*}
Applying the arithmetic-geometric mean inequality to the left-hand side yields
\begin{align*}
\left(1+\frac{r}{\delta_1}\right)^{\frac{1}{r+1}}\left(1-\frac{1}{\delta_1}\right)^{\frac{r}{r+1}}&\le\left(\frac{1}{r+1}\right)\left(1+\frac{r}{\delta_1}\right)+\left(\frac{r}{r+1}\right)\left(1-\frac{1}{\delta_1}\right)\\
&=\frac{1}{r+1}+\frac{r}{r+1}\cdot\frac{1}{\delta_1}+\frac{r}{r+1}-\frac{r}{r+1}\cdot\frac{1}{\delta+1}\\
&=\frac{r+1}{r+1}=1<\beta<\beta^{\frac{2r+1}{r+1}}.
\end{align*}
So $\displaystyle{f\left(\frac{-\delta_1\delta_2}{(\delta_1-1)(1-\delta_2)}\cdot\frac{\Bb}{\Bs}\right)<0}$ holds.  That is, $\displaystyle{k_2>\frac{(\rho-\mu_1)}{(\rho-\mu_2)}\cdot\frac{\Bb}{\Bs}}$, which establishes $(\rho-\mathcal{L})w_0(y)\ge0$ for all $y\in\Gamma_3$.\\
\\
\textbf{\underline{$y\in \Gamma_2$:}}  On $\Gamma_2$, we have $w_1(y)-\Bs+\Bb y=C_2y^\delta_2-\Bs+\Bb y$.  Note that we have already shown that $C_2y^\delta_2-\Bs+\Bb y\ge 0$ for all $y\in\Gamma_2\cup\Gamma_3$.  Hence, $w_1(y)-\Bs+\Bb y\ge 0$ for all $y\in\Gamma_2$.\\
We also have $w_0(y)-w_1(y)+\Bb-\Bs y=C_1y^\delta_1-C_2y^\delta_2+\Bb-\Bs y$.  Let 
\[
\begin{array}{l}
\disp \phi(y)=C_1y^{\delta_1}-C_2y^{\delta_2}+\Bb-\Bs y.\\
\end{array}
\]
Hence
\[
\begin{array}{l}
\disp \phi'(y)=C_1\delta_1y^{\delta_1-1}+C_2(-\delta_2)y^{\delta_2-1}-\Bs \\
\disp \phi''(y)=C_1\delta_1(\delta_1-1)y^{\delta_1-2}-C_2(-\delta_2)(1-\delta_2)y^{\delta_2-2}.\\
\end{array}
\]
By continuity of $w_0$, we know $C_1 k_2^{\delta_1}-C_2 k_2^{\delta_2}+\Bb-\Bs k_2=0$.  That is, we know $\phi(k_2)=0$.\\
By continuity of $w'_0$, we know $C_1\delta_1 k_2^{\delta_1-1}+C_2 (-\delta_2) k_2^{\delta_2-1}-\Bs=0$.  That is, we know $\phi'(k_2)=0$.\\
By continuity of $w_1$, we know $C_2 k_1^{\delta_2}=\Bs-\Bb k_1$.  Hence, $C_1k_1^{\delta_1}-C_2k_1^{\delta_2}+\Bb-\Bs k_1=C_1 k_1^{\delta_1}-\Bs+\Bb k_1+\Bb-\Bs k_1=C_1 k_1^{\delta_1}+(k_1+1)(\Bb-\Bs)\ge0$.  That is, we know $\phi(k_1)\ge0$.\\
By continuity of $w'_1$, we know $-C_2(-\delta_2)k_1^{\delta_2-1}=-\Bb$.  Hence, $C_1\delta_1k_1^{\delta_1-1}+C_2(-\delta_2)k_1^{\delta_2-1}-\Bs =C_1\delta_1k_1^{\delta_1-1}+\Bb-\Bs\ge0$.  That is, we know $\phi'(k_1)\ge0$.  Now,
\begin{align*}
\phi''(y)&=C_1\delta_1(\delta_1-1)y^{\delta_1-2}-C_2(-\delta_2)(1-\delta_2)y^{\delta_2-2}\\
&=\left(\frac{C_2\delta_2k_2^{\delta_2-1}+\Bs}{\delta_1k_2^{\delta_1-1}}\right)\delta_1(\delta_1-1)y^{\delta_1-2}-C_2(-\delta_2)(1-\delta_2)y^{\delta_2-2}\\
%&=\frac{C_2\delta_2(\delta_1-1)k_2^{\delta_2-1}k_2^{-1}}{k_2^{\delta_1-1}k_2^{-1}}\cdot y^{\delta_1-2}+\frac{\Bs(\delta_1-1)k_2^{-1}}{k_2^{\delta_1-1}k_2^{-1}}\cdot y^{\delta_1-2}-\frac{C_2(-\delta_2)(1-\delta_2)k_2^{\delta_2-2}}{k_2^{\delta_2-2}}\cdot y^{\delta_2-2}\\
&=-C_2(-\delta_2)k_2^{\delta_2-2}\left[(\delta_1-1)\left(\frac{y}{k_2}\right)^{\delta_1-2}+(1-\delta_2)\left(\frac{y}{k_2}\right)^{\delta_2-2}\right]+\Bs(\delta_1-1)k_2^{-1}\left(\frac{y}{k_2}\right)^{\delta_1-2}.\\
\end{align*}
Hence $\phi''(k_2)=\Bs(\delta_1-1)k_2^{-1}-C_2(-\delta_2)(\delta_1-\delta_2)k_2^{\delta_2-2}$.  Then note that
\begin{align*}
k_2>\left[\frac{\Bs(\delta_1-1)}{C_2(\delta_1-\delta_2)(-\delta_2)}\right]^{\frac{1}{\delta_2-1}}&\implies k_2^{\delta_2-1}<\frac{\Bs(\delta_1-1)}{C_2(\delta_1-\delta_2)(-\delta_2)}
\end{align*}
since $\delta_2-1<0$.  Thus,
\begin{align*}
&(k_2^{\delta_2-1})k_2^{-1}(-C_2)(-\delta_2)(\delta_1-\delta_2)>\left(\frac{\Bs(\delta_1-1)}{C_2(\delta_1-\delta_2)(-\delta_2)}\right)k_2^{-1}(-C_2)(-\delta_2)(\delta_1-\delta_2)\\
&\implies (k_2^{\delta_2-2})(-C_2)(-\delta_2)(\delta_1-\delta_2)>-\Bs(\delta_1-1)k_2^{-1}\\
&\implies \Bs(\delta_1-1)k_2^{-1}-C_2(-\delta_2)(\delta_1-\delta_2)k_2^{\delta_2-2}>0
\end{align*}
That is, $\phi''(k_2)>0$.\\
Consider the equation $\phi''(y)=0$.
\begin{align*}
\phi''(y)=0&\iff C_1\delta_1(\delta_1-1)y^{\delta_1-2}-C_2(-\delta_2)(1-\delta_2)y^{\delta_2-2}=0\\
&\iff C_1\delta_1(\delta_1-1)y^{\delta_1-2}=C_2(-\delta_2)(1-\delta_2)y^{\delta_2-2}\\
&\iff y^{\delta_1-\delta_2}=\frac{C_2(-\delta_2)(1-\delta_2)}{C_1\delta_1(\delta_1-1)}\\
&\iff y=\left(\frac{C_2(-\delta_2)(1-\delta_2)}{C_1\delta_1(\delta_1-1)}\right)^{\frac{1}{\delta_1-\delta_2}}
\end{align*}
Note then that $\phi''(y)=0$ has a unique solution in $[k_1,k_2]$.\\

\begin{figure}[ht]
\includegraphics[width=0.75\textwidth]{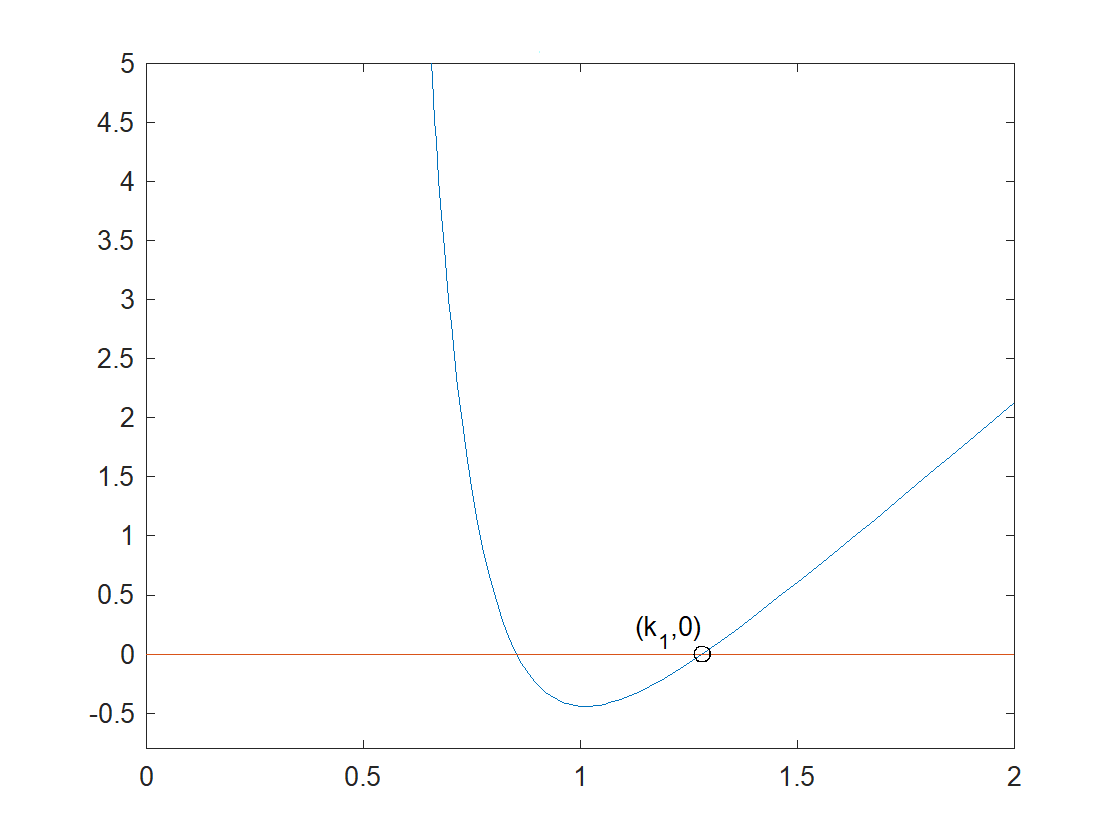}
\caption{\scriptsize Example of solution to $ f(k_1)=0$.}
\end{figure}
\par Observe that $\phi$, $\phi'$, and $\phi''$ are continuous on $[k_1, k_2]$.  Since $\phi(k_2)=\phi'(k_2)=0$ and $\phi''(k_2)>0$, there exists $\varepsilon_1>0$ such that $\phi$ is nonnegative, decreasing, and convex over the interval $(k_2-\varepsilon_1,k_2)$.  Since $\phi(k_1)\ge 0$ and $\phi'(k_1)\ge0$, there exists $\varepsilon_2>0$ such that $\phi$ is nonnegative and increasing on $(k_1, k_1+\varepsilon_2)$; moreover, $k_1+\varepsilon_2<k_2-\varepsilon_1$.  Suppose, if possible, there exists $y\in(k_1+\varepsilon_2, k_2-\varepsilon_1)$ such that $\phi(y)<0$.
\par Note that $\phi\left(k_1+\frac{\varepsilon_2}{2}\right)>0$.  Then by Intermediate Value Theorem, there exists $y_1\in\left(k_1+\frac{\varepsilon_2}{2},y\right)$ such that $\phi(y_1)=0$.  Similarly, since $\phi\left(k_2-\frac{\varepsilon_1}{2}\right)>0$, there exists $y_2\in\left(y,k_2-\frac{\varepsilon_1}{2}\right)$ such that $\phi(y_2)=0$.
Note also that $\phi'\left(k_1+\frac{\varepsilon_2}{2}\right)>0$ and $\phi'(y_1)<0$.  So, by Intermediate Value Theorem, there exists $\widetilde{y_1} \in \left(k_1+\frac{\varepsilon_2}{2}, y_1\right)$ such that $\phi'(\widetilde{y_1})=0$.  Similarly, since $\phi'(y_2)>0$, there exists $\widetilde{y_2}\in(y_1,y_2)$ such that $\phi'(\widetilde{y_2})=0$.  Also, since $\phi'\left(k_2-\frac{\varepsilon_1}{2}\right)<0$, there exists $\widetilde{y_3}\in\left(y_2, k_2-\frac{\varepsilon_1}{2}\right)$ such that $\phi'(\widetilde{y_3})=0$.
\par Finally, since $\phi'(\widetilde{y_1})=\phi'(\widetilde{y_2})=0$, by Rolle's Theorem, there exists $y_1^*\in(\widetilde{y_1},\widetilde{y_2})$ such that $\phi''(y_1^*)=0$.  Similarly, since $\phi'(\widetilde{y_3})=0$, there exists $y_2^*\in(\widetilde{y_2},\widetilde{y_3})$ such that $\phi''(y_2^*)=0$.  But this is a contradiction, because $y_1^*\in[k_1, k_2]$, $y_2^*\in[k_1,k_2]$, but $y_1^*\ne y_2^*$;  whereas the equation $\phi''(y)=0$ has exactly one solution in the interval $[k_1, k_2]$.
\par Hence, $\phi(y)=C_1y^{\delta_1}-C_2y^{\delta_2}+\Bb-\Bs y\ge 0$ on $\Gamma_2$.  That is, $w_0(y)-w_1(y)+\Bb-\Bs y\ge 0$ for all $y\in\Gamma_2$.

\par The solutions of the HJB equations have the form:
\begin{align}
w_0(y)&=
\begin{cases}
\Bs-\Bb y, &\indent\indent\indent\indent\indent\indent\indent ~\text{if}~ 0<y<k_2\\
\displaystyle{\left(\frac{\Bs}{1-\delta_2}\right)^{1-\delta_2}\left(\frac{\Bb}{-\delta_2}\right)^{\delta_2}y^{\delta_2}}, &\indent\indent\indent\indent\indent\indent\indent~\text{if}~ y\ge k_2\\
\end{cases}\\
w_1(y)&=
\begin{cases}
\displaystyle{\left[1-\left(\frac{\Bs}{\Bb}\right)^{-\delta_2}\left(\frac{-\delta_2}{1-\delta_2}\right)^{1-\delta_2}k_1^{\delta_2-1}\right]\left(\frac{\Bs}{\delta_1}\right)k_1^{1-\delta_1}y^{\delta_1}}, & \/\ ~\text{if}~ 0<y<k_1\\
\displaystyle{\left(\frac{\Bs}{1-\delta_2}\right)^{1-\delta_2}\left(\frac{\Bb}{-\delta_2}\right)^{\delta_2}y^{\delta_2}-\Bb+\Bs y}, &\/\ ~\text{if}~ y\ge k_1\\
\end{cases}
\end{align}
\section{Verification Theorem}
\begin{thmm}
We have $v_i(x_1,x_2)=x_1w_i\left(\dfrac{x_1}{x_2}\right)=V_i(x_1,x_2)$, $i=0,1$.  Moreover, if initially $i=0$, let $\Lambda_0^*=(\tau_1^*, \tau_2^*)$ such that
\begin{center}
$\tau_1^*=\inf\{t\ge0~|~(X_t^1, X_t^2)\in\Gamma_3\}$, $\tau_2^*=\inf\{t\ge\tau_1^*~|~(X_t^1, X_t^2)\in\Gamma_1\}$. 
\end{center}
Similarly, if initially $i=1$, let $\Lambda_1^*=(\tau_0^*)$ such that
\begin{center}$\tau_0^*=\inf\{t\ge0~|~(X_t^1, X_t^2)\in\Gamma_1\}$. 
\end{center}
Then $\Lambda_0^*$ and $\Lambda_1^*$ are optimal.
\end{thmm}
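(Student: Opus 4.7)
\medskip
\noindent\textbf{Plan of proof.}
The approach is a classical verification argument: combine It\^o/Dynkin applied to the candidate functions $v_i(x_1,x_2)=x_1 w_i(x_2/x_1)$ with the variational inequalities established on $\Gamma_1,\Gamma_2,\Gamma_3$ in the previous section to obtain both $V_i\le v_i$ and $V_i\ge v_i$, with equality attained along the hitting-time policies $\Lambda_i^*$. The smooth-fit conditions at $y=k_1,k_2$ make each $w_i$ continuously differentiable with $w_i''$ bounded on compacta and discontinuous only at those two thresholds, so either the generalized It\^o formula or a $C^2$-mollification argument applies to $v_i$. Linear growth of $v_i$, inherited from the explicit piecewise formulas together with the a priori bounds of the earlier lemma, combined with $\rho>\mu_1,\mu_2$, supplies the integrability we need.

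For the upper bound, fix any $\Lambda_1=(\tau_0)$, let $N>0$, and apply It\^o's formula to $e^{-\rho t}v_1(X_t^1,X_t^2)$ on $[0,\tau_0\wedge N]$:
\[
\mathbb{E}\big[e^{-\rho(\tau_0\wedge N)}v_1(X^1_{\tau_0\wedge N},X^2_{\tau_0\wedge N})\big]=v_1(x_1,x_2)-\mathbb{E}\bigg[\int_0^{\tau_0\wedge N} e^{-\rho t}(\rho-\A)v_1(X_t^1,X_t^2)\,dt\bigg],
\]
with the local-martingale part vanishing after localization. Since $(\rho-\A)v_1\ge 0$ on all three regions and $v_1\ge\Bs x_1-\Bb x_2$ (both from HJB), letting $N\to\infty$ and using $\mathbb{E}[e^{-\rho t}X_t^i]=x_i e^{-(\rho-\mu_i)t}\to 0$ to kill the contribution on $\{\tau_0=\infty\}$, we conclude $J_1(x_1,x_2,\Lambda_1)\le v_1(x_1,x_2)$, hence $V_1\le v_1$. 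For $J_0$ we iterate the scheme: apply Dynkin to $v_0$ on $[0,\tau_1\wedge N]$ using $(\rho-\A)v_0\ge 0$ together with $v_0\ge v_1-\Bb x_1+\Bs x_2$, then condition on $\F_{\tau_1}$ (strong Markov property) and apply the preceding step to the post-$\tau_1$ segment, yielding $J_0\le v_0$.

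For the lower bound, examine the candidate policies directly. Under $\Lambda_1^*$ the ratio $X_t^2/X_t^1$ remains in $\Gamma_2\cup\Gamma_3$ throughout $[0,\tau_0^*)$, where $(\rho-\A)v_1\equiv 0$, so Dynkin (with localization at $\tau_0^*\wedge N$) holds with equality. At $\tau_0^*$ the process enters $\Gamma_1$ where $v_1=\Bs X^1-\Bb X^2$; sending $N\to\infty$ using the same growth-based limit, we get $J_1(x_1,x_2,\Lambda_1^*)=v_1(x_1,x_2)$, with the contribution on $\{\tau_0^*=\infty\}$ vanishing automatically. The same scheme delivers $J_0(x_1,x_2,\Lambda_0^*)=v_0(x_1,x_2)$: on $[0,\tau_1^*)$ the ratio lies in $\Gamma_1\cup\Gamma_2$ where $(\rho-\A)v_0\equiv 0$; at $\tau_1^*$ we have $v_0=v_1-\Bb X^1+\Bs X^2$ by construction of $w_0$ on $\Gamma_3$; and the post-$\tau_1^*$ analysis reduces to the $\Lambda_1^*$ case via the strong Markov property.

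The principal obstacle is the transversality-style limit as $N\to\infty$: one must verify $\mathbb{E}[e^{-\rho N}v_i(X_N^1,X_N^2)]\to 0$, for which the linear-growth bound $|v_i(x_1,x_2)|\le C(x_1+x_2)$ together with $\rho>\mu_i$ is essential. A secondary technicality is the absence of a classical second derivative at $y=k_1,k_2$; we handle it by approximating $v_i$ with $C^2$ functions and passing to the limit using boundedness of $w_i''$ off the junction, or equivalently by invoking a generalized It\^o formula for $C^1$ functions with piecewise-continuous second derivative. With these two points secured, the chain of inequalities closes and the optimality of $\Lambda_i^*$ follows immediately.
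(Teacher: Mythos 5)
Your proposal is correct and follows essentially the same route as the paper's own proof: preliminary positivity and linear-growth bounds on $v_i$, a Dynkin/It\^o supermartingale inequality driven by $(\rho-\A)v_i\ge 0$ plus the obstacle inequalities to get $J_i\le v_i$ for arbitrary policies, and equality along the hitting-time policies where $(\rho-\A)v_i=0$ on the continuation regions, with the transversality limit controlled by $\rho>\mu_1,\mu_2$. The paper packages the passage $N\to\infty$ slightly more explicitly via uniform integrability obtained from higher moments of $e^{-\rho t}X_t^i$, but this is the same argument in substance.
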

\begin{proof}The proof is divided into 4 steps.\\
%%%%%%%%%%%%%%%%%%%%%%%%%%%%%%%%%%%%%%%%%%%%%%%%%%%%%%%%%%%%%%%%%%%%%%%%%%%
\textbf{Step 1:~}  $C_1>0$, $C_2>0$, $v_0(x_1,x_2)\ge0$.
\par Clearly $C_2=\left(\dfrac{\beta_s}{1-\delta_2}\right)^{1-\delta_2}\left(\dfrac{\beta_b}{-\delta_2}\right)^{\delta_2}>0$.  Also, $C_1=\dfrac{C_2\delta_2k_2^{\delta_2-1}+\beta_s}{\delta_1k_2^{\delta_1-1}}>0$ has previously been established.  Now,

$v_0(x_1,x_2)=x_1w_0\left(\dfrac{x_2}{x_1}\right)=$ 
$\begin{cases}
        C_1x_2^{\delta_1}x_1^{1-\delta_1}, & \text{on } \Gamma_1\cup\Gamma_2\\
        C_2x_2^{\delta_2}x_1^{1-\delta_2}-\beta_bx_1+\beta_sx_2, & \text{on } \Gamma_3
\end{cases}$\\
\\
Hence to show $v_0(x_1,x_2)\ge0$, it suffices to show $w_0(y)\ge0$ on $\Gamma_3$.  The continuity of $w_0$ and $w_0^{\prime}$ yield
$w_0(k_2)=C_2k_2^{\delta_2}-\beta_b+\beta_sk_2=C_1k_2^{\delta_1}>0$ and $w_0^{\prime}(k_2)=C_2\delta_2k_2^{\delta_2-1}+\beta_s=C_1\delta_1k_2^{\delta_1-1}>0$.  Also, $w_0^{\prime\prime}(y)=C_2\delta_2(\delta_2-1)y^{\delta_2-2}>0$ for all $y>0$.  In particular, since $w_0^{\prime\prime}(y)>0$ for all $y\in\Gamma_3$, we know $w_0^{\prime}(y)$ is increasing on $\Gamma_3$.  And since $w_0^{\prime}(k_2)>0$, it must be that $w_0^{\prime}(y)>0$ for all $y\in\Gamma_3$.  This in turn implies that $w_0(y)$ is increasing on $\Gamma_3$.  Since we know $w_0(k_2)>0$, it must be that $w_0(y)>0$ for all $y\in\Gamma_3$.\\
\\
%%%%%%%%%%%%%%%%%%%%%%%%%%%%%%%%%%%%%%%%%%%%%%%%%%%%%%%%%%%%%%%%%%%%%%%%%%%
\textbf{Step 2:~}  $-Ax_1-Bx_2\le v_i(x_1,x_2)\le Ax_1+Bx_2$\textbf{,} $i=0,1$.
\par Let $i=0$.  On $\Gamma_1\cup\Gamma_2$, we have $0\le v_0(x_1,x_2)=C_1x_1^{1-\delta_1}x_2^{\delta_1}\le C_1x_1k_2^{\delta_1}$.  On $\Gamma_3$, $-\beta_bx_1+\beta_sx_2\le v_0(x_1,x_2)=C_2x_1^{1-\delta_2}x_2^{\delta_2}-\beta_bx_1+\beta_sx_2\le C_2x_1k_1^{\delta_2}-\beta_bx_1+\beta_sx_2$.  Hence we can choose suitable $A$ and $B$ so the inequalities hold when $i=0$. 
\par Let $i=1$.  On $\Gamma_2\cup\Gamma_3$, we have $0\le v_1(x_1,x_2)=C_2x_1^{1-\delta_2}x_2^{\delta_2}\le C_2x_1k_1^{\delta_2}$.  On $\Gamma_1$, $-\beta_bx_2\le v_1(x_1,x_2)=\beta_sx_1-\beta_bx_2\le\beta_sx_1$.  So again we can choose suitable $A$ and $B$ so the inequalities hold when $i=1$.\\
\\
%%%%%%%%%%%%%%%%%%%%%%%%%%%%%%%%%%%%%%%%%%%%%%%%%%%%%%%%%%%%%%%%%%%%%%%%%%%
\textbf{Step 3:~}  $v_i(x_1,x_2)\ge J_i(x_1,x_2,\Lambda_i)$.
\par The functions $v_0$ and $v_1$ are continuously differentiable on the entire region $\{x_1>0,~x_2>0\}$ and twice continuously differentiable on the interior of $\Gamma_i$, $i=1,2,3$.  In addition, they satisfy
\begin{center}
    $0 \le (\rho-\mathcal{L})w_0(y)$ \\
    $0 \le (\rho-\mathcal{L})w_1(y)$ \\
    $-\beta_b+\beta_sy \le w_0(y)-w_1(y)\le w_0(y)-\beta_s+\beta_by$ \\
\end{center}
\par In particular, $\rho v_i(x)-\mathcal{A}v_i(x)\ge0$, $i=0,1$, whenever they are twice continuously differentiable.  Using these inequalities, Dynkin's formula, and Fatou's Lemma, as in \O ksendal, we have $\mathbb{E}\left[e^{-\rho(\theta_1\land N)}v_i(X_{\theta_1\land N}^1,X_{\theta_1\land N}^2) \right]\ge\mathbb{E}\left[e^{-\rho(\theta_2\land N)}v_i(X_{\theta_2\land N}^1,X_{\theta_2\land N}^2) \right]$ for any stopping times $0\le\theta_1\le\theta_2$, almost surely, and any $N$.\\
\indent For each $j=1,2$, 
\begin{align*}
& \mathbb{E}\left[e^{-\rho(\theta_j\land N)}v_i(X_{\theta_j\land N}^1,X_{\theta_j\land N}^2) \right]  \\
& =\mathbb{E}\left[e^{-\rho(\theta_j\land N)}v_i(X_{\theta_j\land N}^1,X_{\theta_j\land N}^2)\mathbb{I}_{\{\theta_j<\infty\}} \right]+\mathbb{E}\left[e^{-\rho(\theta_j\land N)}v_i(X_{\theta_j\land N}^1,X_{\theta_j\land N}^2)\mathbb{I}_{\{\theta_j=\infty\}} \right]\\
& = \mathbb{E}\left[e^{-\rho(\theta_j\land N)}v_i(X_{\theta_j\land N}^1,X_{\theta_j\land N}^2)\mathbb{I}_{\{\theta_j<\infty\}} \right]+\mathbb{E}\left[e^{-\rho N}v_i(X_{ N}^1,X_{N}^2)\mathbb{I}_{\{\theta_j=\infty\}} \right]
\end{align*}
\par In view of Step 2, the second term on the right hand side converges to zero because both $\mathbb{E}\left[e^{-\rho N}X_N^1\right]$ and $\mathbb{E}\left[e^{-\rho N}X_N^2\right]$ go to zero as $N\to\infty$.  Also, $e^{-\rho (\theta_j\land N)}v_i(X_{\theta_j \land N}^1, X_{\theta_j \land N}^2)\mathbb{I}_{\{\theta_j<\infty\}}\to e^{-\rho\theta_j}v_i(X_{\theta_j}^1, X_{\theta_j}^2)\mathbb{I}_{\{\theta_j<\infty\}}$ almost surely as $N\to \infty$.\\
\indent By showing the existence of $\gamma_i$, $i=1,2$ such that 
\begin{align*}
    & \sup_n\mathbb{E}\left[\left(e^{-\rho(\theta_j\land N)}X_{\theta_j\land N}^1\right)^{\gamma_1}\right]<\infty,\\
    & \sup_n\mathbb{E}\left[\left(e^{-\rho(\theta_j\land N)}X_{\theta_j\land N}^2\right)^{\gamma_2}\right]<\infty,
\end{align*}
we can show that both $\left\{e^{-\rho(\theta_j\land N)}X_{\theta_j\land N}^1\right\}$ and $\left\{e^{-\rho(\theta_j\land N)}X_{\theta_j\land N}^2\right\}$ are uniformly integrable.  Hence we obtain the uniform integrability of $\left\{e^{-\rho(\theta_j\land N)}v_i(X_{\theta_j\land N}^1,X_{\theta_j\land N}^2)\right\}$ and send $N$ to $\infty$ to obtain $$\mathbb{E}\left[e^{-\rho\theta_1}v_i(X_{\theta_1}^1, X_{\theta_1}^2)\mathbb{I}_{\{\theta_1<\infty\}}\right]\ge\mathbb{E}\left[e^{-\rho\theta_2}v_i(X_{\theta_2}^1, X_{\theta_2}^2)\mathbb{I}_{\{\theta_2<\infty\}}\right],$$ for $i=0,1$.
\par Given $\Lambda_0=(\tau_1,\tau_2)$, $\Lambda_1=(\tau_0)$
\begin{align*}
    v_0(x_1,x_2)&\ge\mathbb{E}\left[e^{-\rho\tau_1}v_0(X_{\tau_1}^1, X_{\tau_1}^2)\mathbb{I}_{\{\tau_1<\infty\}}\right]\\
    & \ge\mathbb{E}\left[e^{-\rho\tau_1}\left(v_1(X_{\tau_1}^1, X_{\tau_1}^2)-\beta_b X_{\tau_1}^1+\beta_s X_{\tau_1}^2\right)\mathbb{I}_{\{\tau_1<\infty\}}\right]\\
    & =\mathbb{E}\left[e^{-\rho\tau_1} v_1(X_{\tau_1}^1, X_{\tau_1}^2)\mathbb{I}_{\{\tau_1<\infty\}}-e^{-\rho\tau_1}\left(\beta_b X_{\tau_1}^1+\beta_s X_{\tau_1}^2\right) \mathbb{I}_{\{\tau_1<\infty\}}\right]\\
    & \ge\mathbb{E}\left[e^{-\rho\tau_2} v_1(X_{\tau_2}^1, X_{\tau_2}^2)\mathbb{I}_{\{\tau_2<\infty\}}-e^{-\rho\tau_1}\left(\beta_b X_{\tau_1}^1+\beta_s X_{\tau_1}^2\right) \mathbb{I}_{\{\tau_1<\infty\}}\right]\\
    & \ge\mathbb{E}\left[e^{-\rho\tau_2}\left(\beta_sX_{\tau_2}^1-\beta_bX_{\tau_2}^2\right)\mathbb{I}_{\{\tau_2<\infty\}}-e^{-\rho\tau_1}\left(\beta_b X_{\tau_1}^1+\beta_s X_{\tau_1}^2\right) \mathbb{I}_{\{\tau_1<\infty\}}\right]\\
    & = J_0(x_1,x_2,\Lambda_0)\\
%\end{align*}
%Given $\Lambda_1=(\tau_0)$,
%\begin{align*}
    v_1(x_1,x_2)&\ge\mathbb{E}\left[e^{-\rho\tau_1}v_1(X_{\tau_0}^1, X_{\tau_0}^2)\mathbb{I}_{\{\tau_0<\infty\}}\right]\\
    & \ge\mathbb{E}\left[e^{-\rho\tau_0}\left(\beta_s X_{\tau_0}^1-\beta_b X_{\tau_0}^2\right)\mathbb{I}_{\{\tau_0<\infty\}}\right]\\
    & = J_1(x_1,x_2,\Lambda_1)
\end{align*}
%%%%%%%%%%%%%%%%%%%%%%%%%%%%%%%%%%%%%%%%%%%%%%%%%%%%%%%%%%%%%%%%%%%%%%%%%%%
\textbf{Step 4:~}  $v_i(x_1,x_2)= J_i(x_1,x_2,\Lambda_i^*)$.
\par Let $i=0$.  Define $\tau_1^*=\inf{\{t\ge 0~|~(X_t^1, X_t^2)\in\Gamma_3\}}$, $\tau_2^*=\inf{\{t\ge \tau_1^*~|~(X_t^1, X_t^2)\in\Gamma_1\}}$.  We apply Dynkin's formula and notice that, for each $n$, $v_0(x_1,x_2)=\mathbb{E}\left[e^{-\rho(\tau_1^*\land n)}v_0(X_{\tau_1^*\land n}^1, X_{\tau_1^*\land n}^2)\right]$.  Note also that $\underset{n\to\infty}{\lim}\mathbb{E}\left[e^{-\rho(\tau_1^*\land n)}v_0(X_{\tau_1^*\land n}^1, X_{\tau_1^*\land n}^2)\right]=\mathbb{E}\left[e^{-\rho \tau_1^*}v_0(X_{\tau_1^*}^1, X_{\tau_1^*}^2)\mathbb{I}_{\{\tau_1^*<\infty\}}\right]$. It follows that
\begin{align*}
v_0(x_1,x_2)&=\mathbb{E}\left[e^{-\rho \tau_1^*}v_0(X_{\tau_1^*}^1, X_{\tau_1^*}^2)\mathbb{I}_{\{\tau_1^*<\infty\}}\right]\\
&=\mathbb{E}\left[e^{-\rho \tau_1^*}\left(v_1(X_{\tau_1^*}^1, X_{\tau_1^*}^2)-\beta_bX_{\tau_1^*}^1+\beta_sX_{\tau_1^*}^2\right)\mathbb{I}_{\{\tau_1^*<\infty\}}\right].
\end{align*}
We have also 
\begin{align*}
\mathbb{E}\left[e^{-\rho \tau_1^*}v_1(X_{\tau_1^*}^1, X_{\tau_1^*}^2)\mathbb{I}_{\{\tau_1^*<\infty\}}\right]&=\mathbb{E}\left[e^{-\rho \tau_2^*}v_1(X_{\tau_2^*}^1, X_{\tau_2^*}^2)\mathbb{I}_{\{\tau_2^*<\infty\}}\right]\\
&=\mathbb{E}\left[e^{-\rho \tau_2^*}\left(\beta_sX_{\tau_2^*}^1-\beta_bX_{\tau_2^*}^2\right)\mathbb{I}_{\{\tau_2^*<\infty\}}\right]
\end{align*}
Combine these to obtain
\begin{align*}
v_0(x_1,x_2)&=\mathbb{E}\left[e^{-\rho \tau_2^*}\left(\beta_sX_{\tau_2^*}^1-\beta_bX_{\tau_2^*}^2\right)\mathbb{I}_{\{\tau_2^*<\infty\}}-\left(\beta_bX_{\tau_1^*}^1+\beta_sX_{\tau_1^*}^2\right)\mathbb{I}_{\{\tau_1^*<\infty\}}\right]  \\
&= J_0(x_1,x_2,\Lambda_0^*).
\end{align*}

\par Let $i=1$.  Define $\tau_0^*=\inf{\{t\ge 0~|~(X_t^1, X_t^2)\in\Gamma_1\}}$.  We apply Dynkin's formula and notice that, for each $n$, $v_1(x_1,x_2)=\mathbb{E}\left[e^{-\rho(\tau_0^*\land n)}v_1(X_{\tau_0^*\land n}^1, X_{\tau_0^*\land n}^2)\right]$.  \\\
Note also that $\underset{n\to\infty}{\lim}\mathbb{E}\left[e^{-\rho(\tau_0^*\land n)}v_1(X_{\tau_0^*\land n}^1, X_{\tau_0^*\land n}^2)\right]=\mathbb{E}\left[e^{-\rho \tau_0^*}v_1(X_{\tau_0^*}^1, X_{\tau_0^*}^2)\mathbb{I}_{\{\tau_0^*<\infty\}}\right]$. It follows that
\begin{align*}
v_1(x_1,x_2)&=\mathbb{E}\left[e^{-\rho \tau_0^*}v_1(X_{\tau_0^*}^1, X_{\tau_0^*}^2)\mathbb{I}_{\{\tau_0^*<\infty\}}\right]\\
&=\mathbb{E}\left[e^{-\rho \tau_0^*}\left(\beta_sX_{\tau_0^*}^1-\beta_bX_{\tau_0^*}^2\right)\mathbb{I}_{\{\tau_0^*<\infty\}}\right]\\
&= J_1(x_1,x_2,\Lambda_1^*).
\end{align*}  
\end{proof}
\section{A Numerical Example}
\begin{figure}
\includegraphics[width=0.75\textwidth]{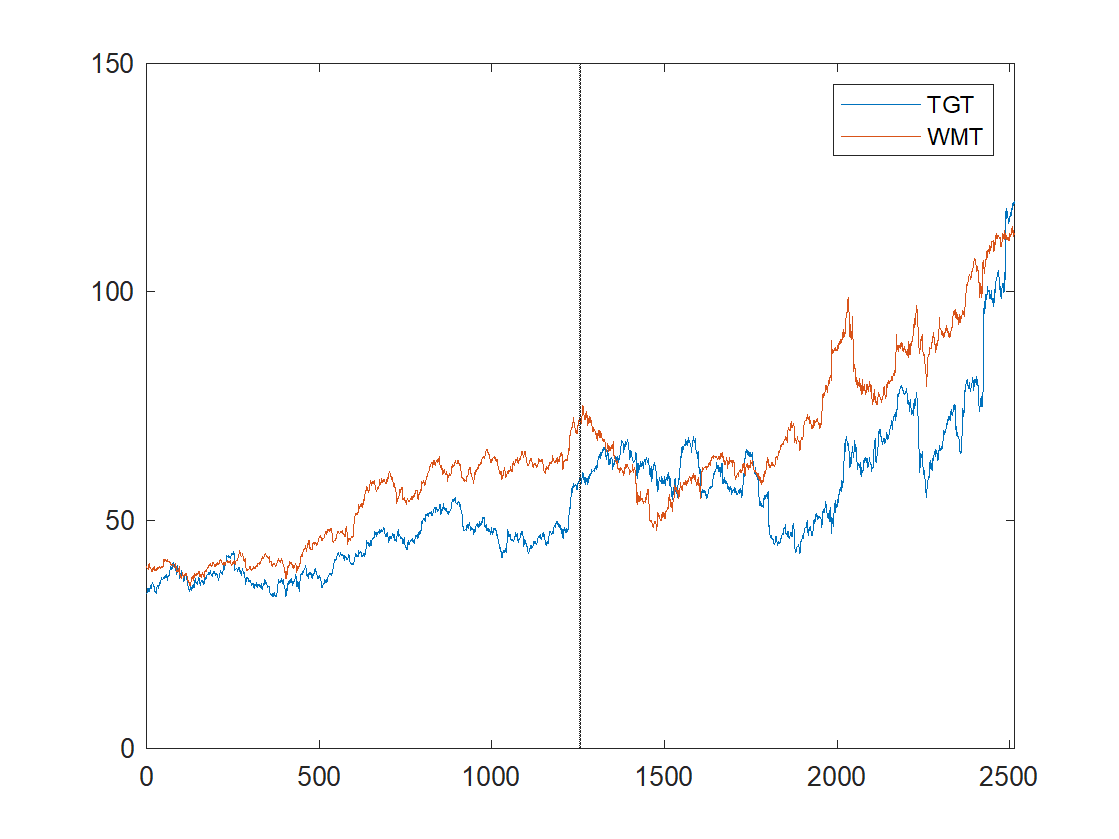}
\caption{\scriptsize Closing Prices of TGT and WMT}
\end{figure}

\par We consider adjusted closing price data for Walmart (WMT) and Target (TGT) from 2010 to 2020.  The first half of the data is used to calibrate the model, and the second half is used to test the results.  Using a least-squares method, we obtain the following parameters: $\mu_1=0.09696$, $\mu_2=0.14347$, $\sigma_{11}=0.19082$, $\sigma_{12}=0.04036$, $\sigma_{21}=0.04036$, and $\sigma_{22}=0.13988$.  We specify $K=0.001$ and $\rho=0.5$.  Then we find $k_1=0.85527$, and $k_2=1.28061$.
\par
Next we examine the dependence of $k_1$ and $k_2$ on the parameters by varying each.  We see that $k_1$ and $k_2$ both decrease in $\mu_1$.  This leads to a larger buying region, $\Gamma_3$.  On the other hand, both $k_1$ and $k_2$ increase in $\mu_2$.  This creates a larger $\Gamma_1$ and, hence, encourages early exit.  When varying $\sigma_{11}$ and $\sigma_{22}$, we find that $k_2$ increases while $k_1$ decreases, in both $\sigma_{11}$ and $\sigma_{22}$.  This leads to a smaller buying zone, $\Gamma_1$, due to the increased risk, as well as a smaller selling zone, $\Gamma_3$, because there is more price movement overall.  However, as $\sigma_{12}=\sigma_{21}$ increases, we find that $k_2$ decreases, while $k_1$ increases.  The greater correlation leads to a larger $\Gamma_1$, and hence more opportunity for buying, as well as a larger $\Gamma_3$, and hence more opportunity for selling.  Since $r$ represents the rate at which money loses value over time, $k_2$ decreases in $r$, while $k_1$ increases in $r$, reflecting the fact that we are less likely to want to hold in this case.  Finally, larger transaction costs discourage trading.  Naturally, as $K$ increases, $k_2$ increases and $k_1$ decreases.  

\begin{table}[H]
\begin{center}
\begin{tabular}{ |c|c|c|c|c|c| } 
\hline
$\mu_1$ & $-0.00304$ & $0.04696$ & $0.09696$ & $0.14696$ & $0.19696$ \\
 \hline
$k_1$ & $0.91380$ & $0.89057$ & $0.85527$ & $0.80194$ & $0.72644$  \\ 
\hline
$k_2$ & $1.54188$ & $1.41541$ & $1.28061$ & $1.12891$ & $0.96334$ \\
\hline
\end{tabular}
\caption{$k_1$ and $k_2$ with varying $\mu_1$}
\end{center}
\end{table}

\begin{table}[H]
\begin{center}
\begin{tabular}{ |c|c|c|c|c|c| } 
\hline
$\mu_2$ & $0.04347$ & $0.09347$ & $0.14347$ & $0.19347$ & $0.24347$ \\
\hline
$k_1$ & $0.76457$ & $0.81341$ & $0.85527$ & $0.88736$ & $0.91037$  \\ 
\hline
$k_2$ & $0.98771$ & $1.12128$ & $1.28061$ & $1.47155$ & $1.72474$ \\ 
\hline
\end{tabular}
\caption{$k_1$ and $k_2$ with varying $\mu_2$}
\end{center}
\end{table}

\begin{table}[H]
\begin{center}
\begin{tabular}{ |c|c|c|c|c|c| } 
\hline
$\sigma_{11}$ & $0.09082$ & $0.14082$ & $0.19082$ & $0.24082$ & $0.29082$\\
\hline
$k_1$ & $0.92069$ & $0.89220$ & $0.85527$ & $0.81532$ & $0.77497$\\
\hline
$k_2$ & $1.21691$ & $1.24468$ & $1.28061$ & $1.32066$ & $1.36327$\\
\hline
\end{tabular}
\caption{$k_1$ and $k_2$ with varying $\sigma_{11}$}
\end{center}
\end{table}

\begin{table}[H]
\begin{center}
\begin{tabular}{ |c|c|c|c|c|c| } 
\hline
$\sigma_{22}$ & $0.03988$ & $	0.08988$ & $0.13988$ & $0.18988$ & $0.23988$\\
\hline
$k_1$ & $0.88356$ & $0.87601$ & $0.85527$ & $0.82593$ & $0.79206$\\
\hline
$k_2$ & $1.25304$ & $1.26036$ & $1.28061$ & $1.30985$ & $1.34491$\\
\hline
\end{tabular}
\caption{$k_1$ and $k_2$ with varying $\sigma_{22}$}
\end{center}
\end{table}

\begin{table}[H]
\begin{center}
\begin{tabular}{ |c|c|c|c|c|c| } 
\hline
$\sigma_{12}$ & $-0.05964$ & $-0.00964$ & $0.04036$ & $0.09036$ & $0.14036$\\
\hline
$k_1$ & $0.73242$ & $0.79189$ & $0.85527$ & $0.92029$ & $0.97527$\\
\hline
$k_2$ & $1.41132$ & $1.34509$ & $1.28061$ & $1.21730$ & $1.15901$\\
\hline
\end{tabular}
\caption{$k_1$ and $k_2$ with varying $\sigma_{12}=\sigma_{21}$}
\end{center}
\end{table}

\begin{table}[H]
\begin{center}
\begin{tabular}{ |c|c|c|c|c|c| } 
\hline
$r$ & $0.4$ & $0.45$ & $0.5$ & $0.55$ & $0.6$\\
\hline
$k_1$ & $0.84068$ & $0.84858$ & $0.85527$ & $0.86105$ & $0.86611$\\
\hline
$k_2$ & $1.36281$ & $1.31541$ & $1.28061$ & $1.25387$ & $1.23262$\\
\hline
\end{tabular}
\caption{$k_1$ and $k_2$ with varying $r$}
\end{center}
\end{table}

\begin{table}[H]
\begin{center}
\begin{tabular}{ |c|c|c|c|c|c| } 
\hline
$K$ & $0.0000$ & $0.0005$ & $0.0010$ & $0.0015$ & $0.0020$\\
\hline
$k_1$ & $0.85698$ & $0.85613$ & $0.85527$ & $0.85442$ & $0.85356$\\
\hline
$k_2$ & $1.27670$ & $1.27866$ & $1.28061$ & $1.28254$ & $1.28447$\\
\hline
\end{tabular}
\caption{$k_1$ and $k_2$ with varying $K$}
\end{center}
\end{table}

\section{A Second Approach to Formulating the Problem}

\par Having previously allowed the initial pairs position to be long or flat, a natural next question to consider is the short side of pairs trading.  So, we begin again with the same stochastic differential equation as in (\ref{SDE}) and the same partial differential operator as in (\ref{pdo}), but now we allow our intial pairs position to be flat ($i=0$), long ($i=1$), or short ($i=-1$). If initially we are short in $\textbf{Z}$, we will buy one share of $\textbf{Z}$, i.e. buy one share of $\textbf{S}^1$ and sell one share of $\textbf{S}^2$, at some time $\tau_0$, which will conclude our trading activity.  If initially we are long in $\textbf{Z}$, we will sell one share of $\textbf{Z}$, i.e. sell $\textbf{S}^1$ and buy $\textbf{S}^2$ at some time $\tau_0$, which will conclude our trading activity.  Otherwise, if initially we are flat, we can either go long or short one share in $\textbf{Z}$ at some time $\tau_1$.  Depending on our activity at time $\tau_1,$ we would then either sell $\textbf{S}^1$ and buy $\textbf{S}^2$ (if long) or buy $\textbf{S}^1$ and sell $\textbf{S}^2$ (if short) at some time $\tau_2\ge\tau_1,$ thus concluding our trading activity. Hence, for $x_1,x_2>0$, the HJB equations become

\par We seek thresholds $k_1,$ $k_2,$ $k_3,$ and $k_4$ for buying and selling $\textbf{Z}.$  Let $k_1$ indicate the price at which we will sell one share of $\textbf{Z}$ when the net position is flat.  Let $k_2$ indicate the price at which we will sell one share of $\textbf{Z}$ when the net position is long.  Let $k_3$ indicate the price at which we will buy one share of $\textbf{Z}$ when the net position is short.  Let $k_4$ indicate the price at which we will buy one share of $\textbf{Z}$ when the net position is flat.  Then define the following function:
\beq{u}
u(x_1,x_2,i)=\begin{cases} 
-1 & i=0~~\text{and}~~x_2 \leq x_1 k_1\\
-1 & i=1~~\text{and}~~x_2 \leq x_1 k_2\\
1 & i=-1~~\text{and}~~x_2 \ge x_1 k_3\\
1 & i=0~~\text{and}~~x_2 \ge x_1 k_4\\
\end{cases}
\eeq
\par Let $K$ denote the fixed percentage of transaction costs associate with buying or selling of stocks.  Then given the initial state $(x_1,x_2),$ the initial net position $i=-1,0,1,$ and the decision sequences $\Lambda_{-1}=(\tau_0),$ $\Lambda_1=(\tau_0)$ and $\Lambda_0=(\tau_1,\tau_2)$, the resulting reward functions are
\begin{align}
J_{-1}(x_1,x_2,\tau_0) =&\mathbb{E}\left[-e^{-\rho\tau_0}\left(\beta_b X_{\tau_0}^1-\beta_s X_{\tau_0}^2\right)\mathbb{I}_{\{\tau_0<\infty\}}\right]\\
J_0(x_1,x_2,\tau_1,\tau_2,u) =&\mathbb{E}\big[\big\{e^{-\rho\tau_2}\left(\beta_s X_{\tau_2}^1-\beta_b X_{\tau_2}^2\right)\mathbb{I}_{\{\tau_2<\infty\}}
%&~~
-e^{-\rho\tau_1}\left(\beta_b X_{\tau_1}^1-\beta_s X_{\tau_1}^2\right)\mathbb{I}_{\{\tau_1<\infty\}}\big\}\mathbb{I}_{\{u=1\}}\\
&+\big\{e^{-\rho\tau_1}\left(\beta_s X_{\tau_1}^1-\beta_b X_{\tau_1}^2\right)\mathbb{I}_{\{\tau_1<\infty\}}
%&~~
-e^{-\rho\tau_2}\left(\beta_b X_{\tau_2}^1-\beta_s X_{\tau_2}^2\right)\mathbb{I}_{\{\tau_2<\infty\}}\big\}\mathbb{I}_{\{u=-1\}}\big]\notag\\
J_1(x_1,x_2,\tau_0) =&\mathbb{E}\left[e^{-\rho\tau_0}\left(\beta_s X_{\tau_0}^1-\beta_b X_{\tau_0}^2\right)\mathbb{I}_{\{\tau_0<\infty\}}\right]
\end{align}
\par For $i=-1,0,1,$ let $V_i(x_1,x_2)$ denote the value functions with initial state $(X_0^1, X_0^2)=(x_1,x_2)$ and initial net positions $i=-1,0,1.$  That is, $V_i(x_1,x_2)=\underset{\Lambda_i}{\sup}{~J_i(x_1,x_2,\Lambda_i)}$.

\section{Properties of the Value Functions}
In this section, we establish basic properties of the value functions.  
\begin{lemm}
For all $x_1$, $x_2>0$, we have
$$\beta_sx_1-\beta_bx_2\le V_1(x_1,x_2)\le x_1,$$
$$\beta_sx_2-\beta_bx_1\le V_{-1}(x_1,x_2)\le x_2,\text{~and}$$
$$0\le V_0(x_1,x_2)\le 4x_1+4x_2.$$
\end{lemm}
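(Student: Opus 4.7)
The plan is to follow the template of the earlier lemma (the one-sided pairs-trading case) and then handle the short side and the flat/short combination by symmetry. First, for the lower bounds I would use trivial admissible strategies. Taking $\tau_0 = 0$ in $\Lambda_1$ gives immediately $V_1(x_1,x_2) \ge \beta_s x_1 - \beta_b x_2$; analogously, taking $\tau_0 = 0$ in $\Lambda_{-1}$ gives $V_{-1}(x_1,x_2) \ge \beta_s x_2 - \beta_b x_1$; and for $V_0$, taking $\tau_1 = \infty$ produces the zero reward, giving $V_0(x_1,x_2) \ge 0$. These are one-line arguments verbatim from the previous case.

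For the upper bound on $V_1$, the proof from the first lemma applies word for word, since the definition of $J_1$ is identical: use Itô's formula on $e^{-\rho t} X_t^i$ to rewrite $\mathbb{E}[e^{-\rho \tau_0} X_{\tau_0}^i \mathbb{I}_{\{\tau_0<\infty\}}] = x_i + \mathbb{E}\bigl[\int_0^{\tau_0}(-\rho+\mu_i)e^{-\rho t} X_t^i\,dt\,\mathbb{I}_{\{\tau_0<\infty\}}\bigr]$, drop the nonpositive integral involving $X_t^1$, bound the $X_t^2$ integral by $\int_0^\infty (\rho - \mu_2) e^{-\rho t} X_t^2\,dt$ whose expectation is $x_2$ (using $\rho > \mu_2$), and conclude $J_1 \le x_1$. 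The bound $V_{-1}(x_1,x_2) \le x_2$ follows by an identical argument with the roles of indices $1$ and $2$ interchanged (and of $\beta_b,\beta_s$ where they appear in $J_{-1}$): the reward $-e^{-\rho\tau_0}(\beta_b X_{\tau_0}^1 - \beta_s X_{\tau_0}^2) \le e^{-\rho\tau_0}(X_{\tau_0}^2 - X_{\tau_0}^1)$, and the same Itô-plus-discarding computation yields $x_2$.

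The main step, and the mild obstacle, is the upper bound $V_0(x_1,x_2) \le 4x_1 + 4x_2$, because $J_0$ now splits into the $\{u = 1\}$ piece and the $\{u = -1\}$ piece via the indicator functions in the definition. My plan is to bound each indicator piece separately. On $\{u = 1\}$ the integrand is exactly the $J_0$ from the earlier flat/long case, so the previous proof gives $\mathbb{E}[\,\cdots\mathbb{I}_{\{u=1\}}] \le 2x_1 + 2x_2$. On $\{u = -1\}$ the integrand is the same expression with $(X^1,X^2)$ and the signs on the $\beta_b,\beta_s$ terms interchanged, so by the symmetric version of the same computation (bound $\beta_s X^2 - \beta_b X^1 \le X^2 - X^1$ and $-(\beta_b X^1 - \beta_s X^2) \le -(X^1 - X^2)$, apply Itô to each $e^{-\rho t} X_t^i$ term, and use $\rho > \mu_i$ to bound residual integrals by $x_i$), I get $\mathbb{E}[\,\cdots\mathbb{I}_{\{u=-1\}}] \le 2x_1 + 2x_2$ as well.

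Adding the two pieces produces $J_0(x_1,x_2,\Lambda_0) \le 4x_1 + 4x_2$, and taking the supremum over admissible $\Lambda_0$ gives $V_0(x_1,x_2) \le 4x_1 + 4x_2$. The hard part, to the extent there is one, is just keeping track of signs in the $u=-1$ piece and verifying that after the symmetric swap the same four telescoping integral bounds go through; no new analytic ideas are needed beyond those already used in the earlier lemma. The factor $4$ (rather than a sharper $2$) is a consequence of estimating the two indicator pieces separately and then summing rather than taking a maximum, which is sufficient for the lemma as stated.
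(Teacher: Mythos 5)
Your proposal is correct and follows essentially the same route as the paper's own proof: identical trivial-strategy lower bounds, the same It\^o/discarding computation for the $V_1$ and $V_{-1}$ upper bounds, and the same decomposition of $J_0$ into the $\{u=1\}$ and $\{u=-1\}$ pieces, each bounded by $2x_1+2x_2$ and summed (the paper simply writes all four resulting terms in one display rather than grouping them into two blocks). No substantive difference.
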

\begin{proof}
Note that for all $x_1, x_2>0,$ $V_1(x_1,x_2)\ge J_1(x_1,x_2,\tau_0)=\mathbb{E}\left[e^{-\rho\tau_0}\left(\beta_sX_{\tau_0}^1-\beta_bX_{\tau_0}^2\right)\mathbb{I}_{\{\tau_0<\infty\}}\right]$.  In particular, $$V_1(x_1,x_2)\ge J_1(x_1,x_2,0)=\beta_sx_1-\beta_bx_2.$$  Similarly, $V_{-1}(x_1,x_2)\ge J_{-1}(x_1,x_2,\tau_0)=\mathbb{E}\left[-e^{-\rho\tau_0}\left(\beta_bX_{\tau_0}^1-\beta_sX_{\tau_0}^2\right)\mathbb{I}_{\{\tau_0<\infty\}}\right]$.  In particular, $$V_{-1}(x_1,x_2)\ge J_{-1}(x_1,x_2,0)=\beta_sx_2-\beta_bx_1.$$  Finally, 
\begin{align*}
V_0(x_1,x_2)&\ge J_0(x_1,x_2,\tau_1, \tau_2,u)\\
&=\mathbb{E}\big[\big\{e^{-\rho\tau_2}\left(\beta_sX_{\tau_2}^1-\beta_bX_{\tau_2}^2\right)\mathbb{I}_{\{\tau_2<\infty\}}- e^{-\rho\tau_1}\left(\beta_bX_{\tau_1}^1-\beta_sX_{\tau_1}^2\right)\mathbb{I}_{\{\tau_1<\infty\}}\big\}
\mathbb{I}_{\{u=1\}}\\
&\indent+\big\{e^{-\rho\tau_1}\left(\beta_sX_{\tau_1}^1-\beta_bX_{\tau_1}^2\right)\mathbb{I}_{\{\tau_1\infty\}}- e^{-\rho\tau_2}\left(\beta_bX_{\tau_2}^1-\beta_sX_{\tau_2}^2\right)\mathbb{I}_{\{\tau_2<\infty\}}\big\}\mathbb{I}_{\{u=-1\}}\big].
\end{align*} 
 Clearly, $V_0(x_2,x_2)\ge 0$ by definition and taking $\tau_1=\infty.$  Now, for all $\tau_0>0,$
$  J_1(x_1,x_2,\tau_0) $
\begin{align*}
& =\mathbb{E}\left[e^{-\rho\tau_0}\left(\beta_sX_{\tau_0}^1-\beta_bX_{\tau_0}^2\right)\mathbb{I}_{\{\tau_0<\infty\}}\right]\\
& \le\mathbb{E}\left[e^{-\rho\tau_0}\left(X_{\tau_0}^1-X_{\tau_0}^2\right)\mathbb{I}_{\{\tau_0<\infty\}}\right]\\
%& =\mathbb{E}\left[e^{-\rho\tau_0}X_{\tau_0}^1\mathbb{I}_{\{\tau_0<\infty\}}\right]-\mathbb{E}\left[e^{-\rho\tau_0}X_{\tau_0}^2\mathbb{I}_{\{\tau_0<\infty\}}\right]\\
& =x_1+\mathbb{E}\left[\int_0^{\tau_0}\left(-\rho+\mu_1\right)e^{-\rho t}X_t^1 \dt \mathbb{I}_{\{\tau_0<\infty\}}\right]-x_2-\mathbb{E}\left[\int_0^{\tau_0}\left(-\rho+\mu_2\right)e^{-\rho t}X_t^2 \dt \mathbb{I}_{\{\tau_0<\infty\}}\right]\\
& \le x_1-x_2-\mathbb{E}\left[\int_0^{\tau_0}\left(-\rho+\mu_2\right)e^{-\rho t}X_t^2 \dt \mathbb{I}_{\{\tau_0<\infty\}}\right]\\
& \le x_1-x_2+\mathbb{E}\left[\int_0^{\infty}\left(\rho-\mu_2\right)e^{-\rho t}X_t^2 \dt \right]\\
%& = x_1-x_2+\left(\rho-\mu_2\right)\int_0^{\infty}e^{-\rho t}x_2e^{\mu_2 t} \dt \\
%& = x_1-x_2+x_2\\
& = x_1.
\end{align*}
Also, for all $\tau_0>0,$
$J_{-1}(x_1,x_2,\tau_0) $
\begin{align*}
& =\mathbb{E}\left[-e^{-\rho\tau_0}\left(\beta_bX_{\tau_0}^1-\beta_sX_{\tau_0}^2\right)\mathbb{I}_{\{\tau_0<\infty\}}\right]\\
& \le\mathbb{E}\left[-e^{-\rho\tau_0}\left(X_{\tau_0}^1-X_{\tau_0}^2\right)\mathbb{I}_{\{\tau_0<\infty\}}\right]\\
%& =\mathbb{E}\left[e^{-\rho\tau_0}X_{\tau_0}^2\mathbb{I}_{\{\tau_0<\infty\}}\right]-\mathbb{E}\left[e^{-\rho\tau_0}X_{\tau_0}^1\mathbb{I}_{\{\tau_0<\infty\}}\right]\\
& =x_2+\mathbb{E}\left[\int_0^{\tau_0}\left(-\rho+\mu_2\right)e^{-\rho t}X_t^2 \dt \mathbb{I}_{\{\tau_0<\infty\}}\right]-x_1-\mathbb{E}\left[\int_0^{\tau_0}\left(-\rho+\mu_1\right)e^{-\rho t}X_t^1 \dt \mathbb{I}_{\{\tau_0<\infty\}}\right]\\
& \le x_2-x_1-\mathbb{E}\left[\int_0^{\tau_0}\left(-\rho+\mu_1\right)e^{-\rho t}X_t^1 \dt \mathbb{I}_{\{\tau_0<\infty\}}\right]\\
& \le x_2-x_1+\mathbb{E}\left[\int_0^{\infty}\left(\rho-\mu_1\right)e^{-\rho t}X_t^1 \dt \right]\\
%& = x_2-x_1+\left(\rho-\mu_1\right)\int_0^{\infty}e^{-\rho t}x_1e^{\mu_1 t} \dt \\
%& = x_2-x_1+x_1\\
& = x_2.
\end{align*}
And, for all $0\le\tau_1\le\tau_2,$ $J_0(x_1,x_2,\tau_1,\tau_2,u)$
\begin{align*}
= & ~ \mathbb{E}\big[e^{-\rho\tau_2}\left(\beta_sX_{\tau_2}^1-\beta_bX_{\tau_2}^2\right)\mathbb{I}_{\{\tau_2<\infty\}}\mathbb{I}_{\{u=1\}}\big]-\mathbb{E}\big[e^{-\rho\tau_1}\left(\beta_bX_{\tau_1}^1-\beta_sX_{\tau_1}^2\right)\mathbb{I}_{\{\tau_1<\infty\}}\mathbb{I}_{\{u=1\}}\big]\\
&+ \mathbb{E}\big[e^{-\rho\tau_1}\left(\beta_sX_{\tau_1}^1-\beta_bX_{\tau_1}^2\right)\mathbb{I}_{\{\tau_1<\infty\}}\mathbb{I}_{\{u=-1\}}\big]-\mathbb{E}\big[e^{-\rho\tau_2}\left(\beta_bX_{\tau_2}^1-\beta_sX_{\tau_2}^2\right)\mathbb{I}_{\{\tau_2<\infty\}}\mathbb{I}_{\{u=-1\}}\big]\\
\le &~ x_1-\mathbb{E}\left[x_2 \mathbb{I}_{\{\tau_2<\infty\}}\mathbb{I}_{\{u=1\}}\right]+\mathbb{E}\left[\int_0^{\tau_2}\left(\rho-\mu_2\right)e^{-\rho t}X_t^2\dt \mathbb{I}_{\{\tau_2<\infty\}}\mathbb{I}_{\{u=1\}}\right]\\
&+x_2-\mathbb{E}\left[x_1 \mathbb{I}_{\{\tau_1<\infty\}}\mathbb{I}_{\{u=1\}}\right]+\mathbb{E}\left[\int_0^{\tau_1}\left(\rho-\mu_1\right)e^{-\rho t}X_t^1\dt \mathbb{I}_{\{\tau_1<\infty\}}\mathbb{I}_{\{u=1\}}\right]\\
&+x_1-\mathbb{E}\left[x_2 \mathbb{I}_{\{\tau_1<\infty\}}\mathbb{I}_{\{u=-1\}}\right]+\mathbb{E}\left[\int_0^{\tau_1}\left(\rho-\mu_2\right)e^{-\rho t}X_t^2\dt \mathbb{I}_{\{\tau_1<\infty\}}\mathbb{I}_{\{u=-1\}}\right]\\
&+x_2-\mathbb{E}\left[x_1 \mathbb{I}_{\{\tau_2<\infty\}}\mathbb{I}_{\{u=-1\}}\right]+\mathbb{E}\left[\int_0^{\tau_2}\left(\rho-\mu_1\right)e^{-\rho t}X_t^1\dt \mathbb{I}_{\{\tau_2<\infty\}}\mathbb{I}_{\{u=-1\}}\right]
\end{align*}
Now, 
\begin{align*}
\mathbb{E}\left[\int_0^{\tau_1}\left(\rho-\mu_1\right)e^{-\rho t}X_t^1\dt \mathbb{I}_{\{\tau_1<\infty\}}\mathbb{I}_{\{u=1\}}\right]&\le   \mathbb{E}\left[\int_0^{\tau_1}\left(\rho-\mu_1\right)e^{-\rho t}X_t^1\dt \mathbb{I}_{\{\tau_1<\infty\}}\right]\\
&\le   \mathbb{E}\left[\int_0^{\infty}\left(\rho-\mu_1\right)e^{-\rho t}X_t^1\dt \right]\\
&=  \left(\rho-\mu_1\right)\int_0^{\infty}e^{-\rho t}x_1e^{\mu_1 t}\dt \\
& =  x_1.
\end{align*}
Similarly,
\begin{align*}
\mathbb{E}\left[\int_0^{\tau_2}\left(\rho-\mu_2\right)e^{-\rho t}X_t^2\dt \mathbb{I}_{\{\tau_2<\infty\}}\mathbb{I}_{\{u=1\}}\right] & \le x_2,\\
\mathbb{E}\left[\int_0^{\tau_1}\left(\rho-\mu_2\right)e^{-\rho t}X_t^2\dt \mathbb{I}_{\{\tau_1<\infty\}}\mathbb{I}_{\{u=-1\}}\right] & \le x_2, ~\text{and}\\
\mathbb{E}\left[\int_0^{\tau_2}\left(\rho-\mu_1\right)e^{-\rho t}X_t^1\dt \mathbb{I}_{\{\tau_2<\infty\}}\mathbb{I}_{\{u=-1\}}\right] & \le x_1.
\end{align*}
Thus, $J_0(x_1,x_2,\tau_1,\tau_2,u)\le 4x_1+4x_2.$
\end{proof}

\section{HJB Equations}
In this section, we study the associated HJB equations, which have the form, for $x_1,x_2>0$,
\beq{HJB2}
\left\{\begin{array}{l}
\min\Big\{\rho v_1(x_1,x_2)-\A v_1(x_1,x_2),\
v_1(x_1,x_2)-\Bs  x_1+\Bb  x_2\Big\}=0,\\
\min\Big\{\rho v_{-1}(x_1,x_2)-\A v_{-1}(x_1,x_2),\
v_{-1}(x_1,x_2)+\Bb  x_1-\Bs  x_2\Big\}=0,\\
\min\Big\{\rho v_0(x_1,x_2)-\A v_0(x_1,x_2),\
v_0(x_1,x_2)-v_1(x_1,x_2)+\Bb  x_1-\Bs  x_2,\\
\qquad \qquad \qquad \qquad \qquad 
\qquad \qquad v_0(x_1,x_2)-v_{-1}(x_1,x_2)-\Bs x_1+\Bb x_2\Big\}=0.\\
\end{array}\right.\\
\eeq
As above, the HJB equations can be reduced to an ODE problem by applying the following substitution.
Let $y=x_2/x_1$ and $v_i(x_1,x_2)=x_1w_i(x_2/x_1)$,
for some function $w_i(y)$ and $i=-1,0,1$.
The HJB equations can be given in terms of $y$ and $w_i$ as follows:
%%%%%%%%%%%%%%%%%%%%%%%%%%%%%%%%%%%%%%%%%%%%%%%%%
\begin{equation}
\label{ode}
\left\{\begin{array}{l}
\min\Big\{\rho w_1(y)-\L w_1(y),\
w_1(y)-\Bs  +\Bb  y\Big\}=0,\\
\min\Big\{\rho w_{-1}(y)-\L w_{-1}(y),\
w_{-1}(y)+\Bb  -\Bs  y\Big\}=0,\\
\min\Big\{\rho w_0(y)-\L w_0(y),\
w_0(y)-w_1(y)+\Bb  -\Bs  y,\ w_0(y)-w_{-1}(y)-\Bs+\Bb y \Big\}=0,\\
\end{array}\right.
\end{equation}
%where 
%\[\L [w_i(y)]=\lambda y^2w''_i(y)+(\mu_2-\mu_1)y w'_i(y)+\mu_1w_i(y)\ \text{with}\ 
%\lambda=\frac{a_{11}-2a_{12}+a_{22}}{2}.\]
As above, $(\rho-\mathcal{L})w_i(y)=0$, $i=-1,0,1$ can be rewritten as Euler-type equations with solutions of the form $y^{\delta}$ for $\delta_1$, $\delta_2$ as in (\ref{delta1}), (\ref{delta2}).

Now, we would like to open pairs position $\textbf{Z}$ when the price of $\textbf{S}^2$ is large relative to the price of $\textbf{S}^1$ ($k_3$ and $k_4$) and close pairs position $\textbf{Z}$ when the price of $\textbf{S}^2$ is small relative to the price of  $\textbf{S}^1$ ($k_1$ and $k_2$).  Additionally, we would be more willing to open pairs position $\textbf{Z}$ when the net position is short than when the net position is flat, since when the net position is short we experience the risk of holding one share of $\textbf{S}^2$ while borrowing one share of  $\textbf{S}^1$.  Similarly, we would be more willing to close pairs position $\textbf{Z}$ when the net position is long than when the net position is flat, since when the net position is long we experience the risk of borrowing one share of $\textbf{S}^2$ while holding one share of  $\textbf{S}^1$.  This suggests that we should expect $k_1\le k_2\le k_3\le k_4.$
\begin{figure}
\includegraphics[width=0.75\textwidth]{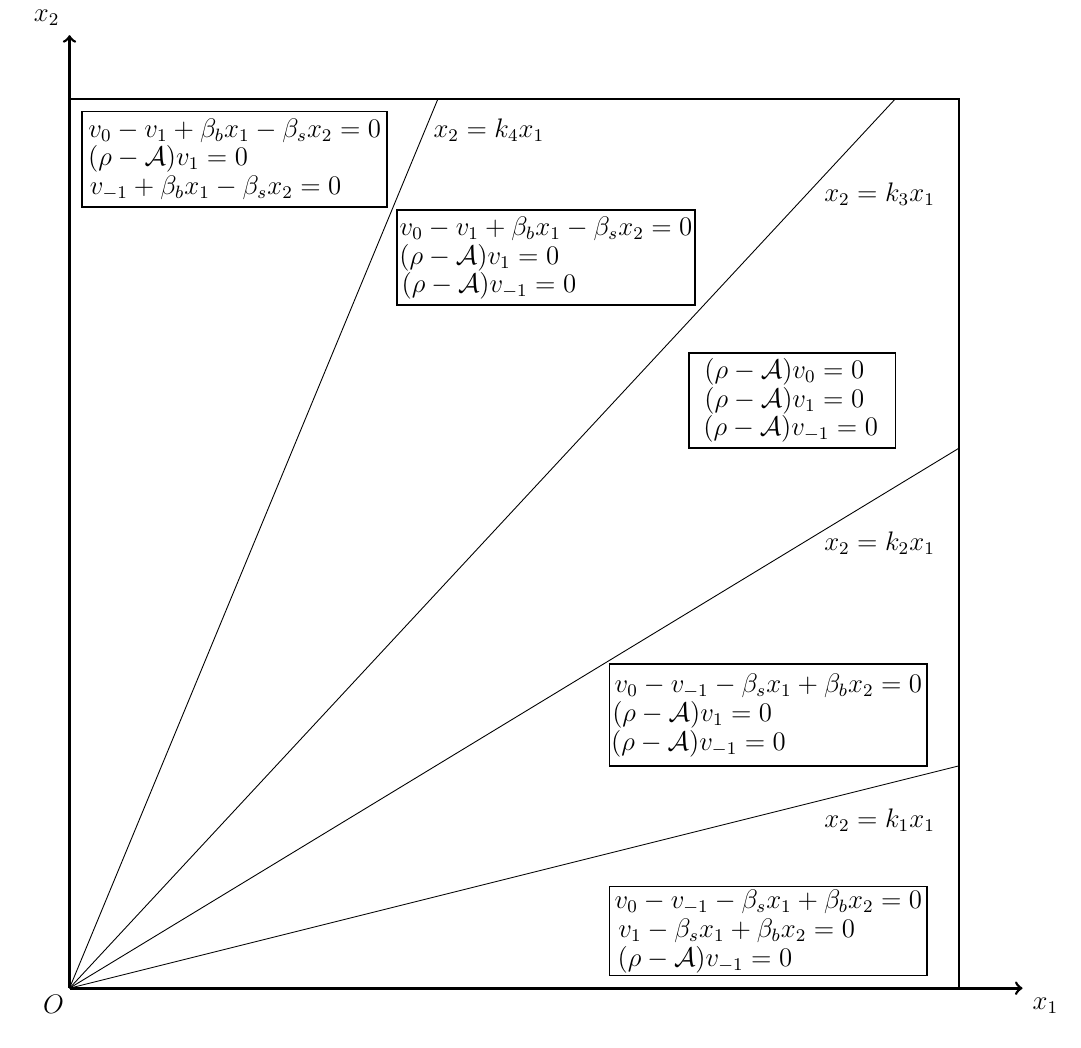}
\caption{Thresholds for buying and selling regions}
\end{figure}

Suppose we can find $k_1$, $k_4$, $k_2$ and $k_3$ with $k_1<k_2<k_3<k_4$ so that the first equation
\[\min\Big\{\rho w_1(y)-\L w_1(y),\
w_1(y)-\Bs  +\Bb  y\Big\}=0\]
has solution 
\beq{}
w_1(y)=\begin{cases} \beta_s-\beta_b y,\/\ \/\ &\/\ \text{if~} 0\leq y \leq k_1\\
C_2 y^{\delta_2},\/\ \/\ &\/\ \text{if~} y\geq k_1.
\end{cases}
\eeq
Then the smooth-fitting conditions yield
\[ \beta_s-\beta_b k_1=C_2 k_1^{\delta_2} \quad \text{and}\quad -\beta_b=C_2\delta_2 k_1^{\delta_2-1}\]
This will imply
\begin{equation}
\label{k_1}
k_1=\frac{-\delta_2}{1-\delta_2} \cdot \frac{\beta_s}{\beta_b}
\end{equation}
and
\beq{C_2}
C_2=\frac{\beta_b}{-\delta_2}\cdot k_1^{1-\delta_2}=\left(\frac{-\delta_2}{\beta_b}\right)^{-\delta_2} \left(\frac{\beta_s}{1-\delta_2}\right)^{1-\delta_2}.
\eeq

\par The second equation
\[\min\Big\{\rho w_{-1}(y)-\L w_{-1}(y),\
w_{-1}(y)+\Bb  -\Bs  y\Big\}=0\]
has solution
\beq{}
w_{-1} (y)=\begin{cases} C_1 y^{\delta_1},\/\ \/\ &\/\ \text{if~} 0\leq y \leq k_4\\
\beta_s y-\beta_b,\/\ \/\ &\/\ \text{if~} y\geq k_4.
\end{cases}
\eeq
Then the smooth-fitting conditions yield
\[C_1 k_4^{\delta_1}=\beta_s k_4-\beta_b  \quad \text{and}\quad C_1\delta_1 k_4^{\delta_1-1}=\beta_s\]
This will imply
\begin{equation}
\label{k_4}
k_4=\frac{\delta_1}{\delta_1-1}\cdot \frac{\beta_b}{\beta_s}
\end{equation}
and
\beq{C_1}
C_1=\frac{\beta_s}{\delta_1}\cdot k_4^{1-\delta_1}=\left(\frac{\beta_s}{\delta_1}\right)^{\delta_1}\left(\frac{\delta_1-1}{\beta_b}\right)^{\delta_1-1}.
\eeq

\par The third equation
\[\min\Big\{\rho w_0(y)-\L w_0(y),\
w_0(y)-w_1(y)+\Bb  -\Bs  y,\ w_0(y)-w_{-1}(y)-\Bs+\Bb y \Big\}=0\]
has solution
\beq{}
w_0(y)=\begin{cases} C_1y^{\delta_1} +\Bs -\Bb y,\/\ \/\ &\/\ \text{if~} 0\leq y \leq k_2\\
B_1y^{\delta_1}+B_2 y^{\delta_2},\/\ \/\ &\/\ \text{if~} k_2\leq y\leq k_3\\
C_2y^{\delta_2}-\Bb +\Bs y,\/\ \/\ &\/\ \text{if~} y\geq k_3.
\end{cases}
\eeq

Then the smooth-fitting conditions yield
\begin{align*}
C_1k_2^{\delta_1} +\Bs -\Bb k_2&=B_1k_2^{\delta_1}+B_2 k_2^{\delta_2} \\
C_1\delta_1 k_2^{\delta_1-1 } -\Bb &=B_1\delta_1 k_2^{\delta_1-1}+B_2 \delta_2 k_2^{\delta_2-1}\\
B_1k_3^{\delta_1}+B_2 k_3^{\delta_2} &=C_2 k_3^{\delta_2}-\Bb +\Bs k_3\\
B_1\delta_1 k_3^{\delta_1-1}+B_2 \delta_2 k_3^{\delta_2-1} &=C_2\delta_2 k_3^{\delta_2-1}+\Bs 
\end{align*}
There are four equations and four parameters, $B_1$, $B_2$, $k_2$, and $k_4$, that need to be found.
These equations can be written in the matrix form:
\[ \begin{pmatrix}
k_2^{\delta_1} & k_2^{\delta_2}\\
\delta_1 k_2^{\delta_1-1} & \delta_2 k_2^{\delta_2-1}
\end{pmatrix}
\begin{pmatrix}
B_1-C_1\\
B_2
\end{pmatrix}
=\begin{pmatrix}
1 & -k_2\\
0 & -1
\end{pmatrix}
\begin{pmatrix}
\Bs\\ \Bb
\end{pmatrix}
\]
and
\[ \begin{pmatrix}
k_3^{\delta_1} & k_3^{\delta_2}\\
\delta_1 k_3^{\delta_1-1} & \delta_2 k_3^{\delta_2-1}
\end{pmatrix}
\begin{pmatrix}
B_1\\
B_2-C_2
\end{pmatrix}
=\begin{pmatrix}
k_3 & -1\\
1 & 0
\end{pmatrix}
\begin{pmatrix}
\Bs\\ \Bb
\end{pmatrix}
.\]
We introduce a new matrix
\[\Phi(r)=\begin{pmatrix}
r^{\delta_1} & r^{\delta_2}\\
\delta_1 r^{\delta_1-1} & \delta_2 r^{\delta_2-1}
\end{pmatrix}\quad \text{and its inverse}\quad
\Phi(r)^{-1}=\frac{1}{\delta_1-\delta_2}
\begin{pmatrix}
-\delta_2 r^{-\delta_1} & r^{1-\delta_1}\\
\delta_1 r^{-\delta_2} & -r^{1-\delta_2}
\end{pmatrix}
\]
for $r\ne0$.
Returning to the smooth-fit conditions above, we have
\[\begin{pmatrix}
B_1-C_1\\
B_2
\end{pmatrix}=\Phi(k_2)^{-1}\begin{pmatrix}
1 & -k_2\\
0 & -1
\end{pmatrix}
\begin{pmatrix}
\Bs\\ \Bb
\end{pmatrix}
\]
and
\[\begin{pmatrix}
B_1\\
B_2-C_2
\end{pmatrix}=\Phi(k_3)^{-1}\begin{pmatrix}
k_3 & -1\\
1 & 0
\end{pmatrix}
\begin{pmatrix}
\Bs\\ \Bb
\end{pmatrix}
\]
This implies
\[\begin{pmatrix}
B_1\\
B_2
\end{pmatrix}=\begin{pmatrix}
C_1\\
0
\end{pmatrix}+\Phi(k_2)^{-1}\begin{pmatrix}
1 & -k_2\\
0 & -1
\end{pmatrix}
\begin{pmatrix}
\Bs\\ \Bb
\end{pmatrix}
=\begin{pmatrix}
0\\
C_2
\end{pmatrix}+\Phi(k_3)^{-1}\begin{pmatrix}
k_3 & -1\\
1 & 0
\end{pmatrix}
\begin{pmatrix}
\Bs\\ \Bb
\end{pmatrix}.\]
The second equality yields two equations of $k_2$ and $k_3$, we can rewrite it as
\[\left[\Phi(k_3)^{-1}\begin{pmatrix}
k_3 & -1\\
1 & 0
\end{pmatrix}- \Phi(k_2)^{-1}\begin{pmatrix}
1 & -k_2\\
0 & -1
\end{pmatrix}\right]
\begin{pmatrix}
\Bs\\ \Bb
\end{pmatrix}
=\begin{pmatrix}
C_1\\
-C_2
\end{pmatrix}
\]
The matrix in $[\cdot]$ is 
\[\frac{1}{\delta_1-\delta_2}
\begin{pmatrix}
(1-\delta_2)k_3^{1-\delta_1}+\delta_2k_2^{-\delta_1} & \delta_2 k_3^{-\delta_1}+(1-\delta_2)k_2^{1-\delta_1}\\
-(1-\delta_1)k_3^{1-\delta_2}-\delta_1k_2^{-\delta_2} & -\delta_1k_3^{-\delta_2}-(1-\delta_1)k_2^{1-\delta_2}
\end{pmatrix}
\]
The two equations involving $k_2$ and $k_3$ are
\[\frac{1}{\delta_1-\delta_2}
\begin{pmatrix}
(1-\delta_2)k_3^{1-\delta_1}+\delta_2k_2^{-\delta_1} & \delta_2 k_3^{-\delta_1}+(1-\delta_2)k_2^{1-\delta_1}\\
(1-\delta_1)k_3^{1-\delta_2}+\delta_1k_2^{-\delta_2} & \delta_1k_3^{-\delta_2}+(1-\delta_1)k_2^{1-\delta_2}
\end{pmatrix}
\begin{pmatrix}
\Bs\\ \Bb
\end{pmatrix}
=\begin{pmatrix}
C_1\\
C_2
\end{pmatrix}
\]
Recall that
\[ C_1=\frac{\beta_s}{\delta_1} \cdot k_4^{1-\delta_1}=\left(\frac{\beta_s}{\delta_1}\right)^{\delta_1}\left(\frac{\delta_1-1}{\beta_b}\right)^{\delta_1-1}
\ \text{and}\ C_2=\frac{\beta_b}{-\delta_2} \cdot k_1^{1-\delta_2}=\left(\frac{-\delta_2}{\beta_b}\right)^{-\delta_2} \left(\frac{\beta_s}{1-\delta_2}\right)^{1-\delta_2}.\]
The system of equations for $k_2$ and $k_3$ are
\begin{align*}
\frac{(1-\delta_2)k_3^{1-\delta_1}+\delta_2k_2^{-\delta_1} }{\delta_1-\delta_2} \Bs
+\frac{\delta_2 k_3^{-\delta_1}+(1-\delta_2)k_2^{1-\delta_1} }{\delta_1-\delta_2} \Bb & =
\frac{\beta_s}{\delta_1}\cdot k_4^{1-\delta_1}\\
\frac{(1-\delta_1)k_3^{1-\delta_2}+\delta_1k_2^{-\delta_2}}{\delta_1-\delta_2} \Bs
+\frac{\delta_1k_3^{-\delta_2}+(1-\delta_1)k_2^{1-\delta_2}}{\delta_1-\delta_2} \Bb & =
\frac{\beta_b}{-\delta_2} \cdot k_1^{1-\delta_2}.
\end{align*}
We are looking for solutions $(k_2,k_3)$ in the triangular  region 
\[ T=\{(r,s):\ k_1\le r<s\le k_4\} \subset \R_{+}^2.\]
Let $\gamma=\dfrac{\beta_b}{\beta_s}$. Then we can reduce the system to
\begin{align}
F_1(k_2,k_3):=\frac{(1-\delta_2)k_3^{1-\delta_1}+\delta_2k_2^{-\delta_1} }{\delta_1-\delta_2} 
+\frac{\delta_2 k_3^{-\delta_1}+(1-\delta_2)k_2^{1-\delta_1} }{\delta_1-\delta_2} \gamma
- \frac{k_4^{1-\delta_1}}{\delta_1} & =0\label{F1}
\\
F_2(k_2,k_3):=\frac{(1-\delta_1)k_3^{1-\delta_2}+\delta_1k_2^{-\delta_2}}{\delta_1-\delta_2} 
+\frac{\delta_1k_3^{-\delta_2}+(1-\delta_1)k_2^{1-\delta_2}}{\delta_1-\delta_2} \gamma 
-\frac{\gamma k_1^{1-\delta_2}}{-\delta_2} 
& =0\label{F2}.
\end{align}
\begin{figure}[ht]
\centering\scalebox{1}{\includegraphics{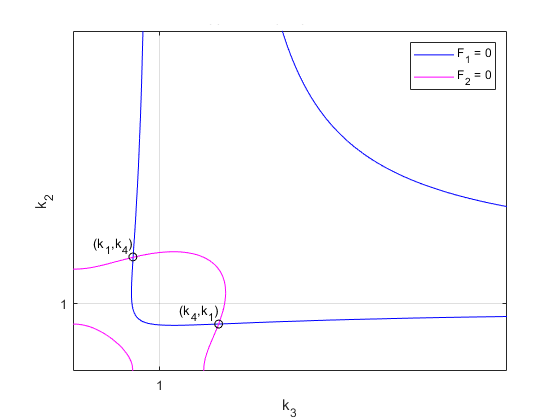}}
\caption{Numerical solution to system of equations in (\ref{F1}) and (\ref{F2}).}
\end{figure}

Note that $(k_1,k_4)$ is a solution to the system, since:

\begin{align*}
F_1(k_1,k_4)& =\frac{(1-\delta_2)k_4^{1-\delta_1}+\delta_2k_1^{-\delta_1} }{\delta_1-\delta_2} 
+\frac{\delta_2 k_1^{-\delta_1}+(1-\delta_2)k_4^{1-\delta_1} }{\delta_1-\delta_2} \gamma
- \frac{k_4^{1-\delta_1}}{\delta_1}\\
%&=\frac{k_4^{-\delta_1}}{\delta_1-\delta_2} \left[(1-\delta_2)k_4-
%\frac{\delta_1-\delta_2}{\delta_1} k_4+\delta_2 \gamma\right]
%+\frac{k_1^{-\delta_1}}{\delta_1-\delta_2}\left[\delta_2+(1-\delta_2)\gamma k_1\right]\\
&=\frac{k_4^{-\delta_1}}{\delta_1-\delta_2}\left[\frac{(1-\delta_2) \delta_1}{\delta_1-1}\gamma-\frac{\delta_1-\delta_2}{\delta_1-1}\gamma +\delta_2\gamma\right]+\frac{k_1^{-\delta_1}}{\delta_1-\delta_2}\left[\delta_2+(-\delta_2)\right]\\
%&=\frac{k_4^{-\delta_1}}{\delta_1-\delta_2}\left[\frac{-\delta_2(\delta_1-1)}{\delta_1-1}\gamma+\delta_2\gamma\right]\\
&=0
\end{align*}
and
\begin{align*}
F_2(k_1,k_4)& =\frac{(1-\delta_1)k_4^{1-\delta_2}+\delta_1k_1^{-\delta_2}}{\delta_1-\delta_2} 
+\frac{\delta_1k_4^{-\delta_2}+(1-\delta_1)k_1^{1-\delta_2}}{\delta_1-\delta_2} \gamma 
-\frac{\gamma k_1^{1-\delta_2}}{-\delta_2} \\
%&=\frac{k_4^{-\delta_2}}{\delta_1-\delta_2} \left[(1-\delta_1)k_4-\delta_1 \gamma\right]
%+\frac{k_1^{-\delta_2}}{\delta_1-\delta_2}\left[\delta_1+(1-\delta_1)\gamma k_1+\frac{\delta_1-\delta_2}{\delta_2}\gamma k_1\right]\\
&=\frac{k_4^{-\delta_2}}{\delta_1-\delta_2} \left[-\delta_1\gamma+\delta_1\gamma\right]
+\frac{k_1^{-\delta_2}}{\delta_1-\delta_2}\left[\delta_1+\frac{(1-\delta_1)(-\delta_2)}{1-\delta_2}-\frac{\delta_1-\delta_2}{1-\delta_2}\right]\\
%&=\frac{k_1^{-\delta_2}}{\delta_1-\delta_2}\left[\delta_1+\frac{\delta_1\delta_2-\delta_1}{1-\delta_2}\right]\\.
%&=\frac{k_1^{-\delta_2}}{\delta_1-\delta_2}\left[\delta_1-\frac{\delta_1(1-\delta_2)}{1-\delta_2}\right]\\
&=0.
\end{align*}

Now, recall that the smooth-fit conditions for $w_0$ can be written as:\\
$\left\{\begin{array}{l}
(B_1-C_1)k_2^{\delta_1}+B_2k_2^{\delta_2}=\beta_s-\beta_bk_2\\

(B_1-C_1)\delta_1k_2^{\delta_1-1}+B_2\delta_2k_2^{\delta_2-1}=-\beta_b
\end{array}\right.$\\
\\
and\\
\\
$\left\{\begin{array}{l}
B_1k_3^{\delta_1}+(B_2-C_2)k_3^{\delta_2}=\beta_sk_3-\beta_b\\

B_1\delta_1k_3^{\delta_1-1}+(B_2-C_2)\delta_2k_3^{\delta_2-1}=\beta_s
\end{array}\right.$\\
\\
From these we obtain:
\begin{align}
&\dfrac{(B_1-C_1)k_2^{\delta_1}}{(B_1-C_1)\delta_1k_2^{\delta_1-1}} =\dfrac{\beta_s-\beta_bk_2-B_2k_2^{\delta_2}}{-\beta_b-B_2\delta_2k_2^{\delta_2-1}}\notag\\
\implies & \dfrac{k_2}{\delta_1}=\dfrac{\beta_s-\beta_bk_2-B_2k_2^{\delta_2}}{-\beta_b-B_2\delta_2k_2^{\delta_2-1}}\notag\\
\implies & \beta_s\delta_1-\beta_b\delta_1k_2-B_2\delta_1k_2^{\delta_2}=-\beta_bk_2-B_2\delta_2k_2^{\delta_2}\notag\\
%\implies & B_2k_2^{\delta_2}(\delta_1-\delta_2)=\beta_s\delta_1+\beta_b(1-\delta_1)k_2\\
\implies & B_2=\dfrac{\beta_s\delta_1k_2^{-\delta_2}-\beta_b(\delta_1-1)k_2^{1-\delta_2}}{\delta_1-\delta_2}
\end{align}
Also
\begin{align}
&\dfrac{B_2k_2^{\delta_2}}{B_2\delta_2k_2^{\delta_2-1}}=\dfrac{\beta_s-\beta_bk_2-(B_1-C_1)k_2^{\delta_1}}{-\beta_b-(B_1-C_1)\delta_1k_2^{\delta_1-1}}\notag\\
\implies & \dfrac{k_2}{\delta_2}=\dfrac{\beta_s-\beta_bk_2-(B_1-C_1)k_2^{\delta_1}}{-\beta_b-(B_1-C_1)\delta_1k_2^{\delta_1-1}}\notag\\
\implies & \beta_s\delta_2-\beta_b\delta_2k_2-(B_1-C_1)\delta_2k_2^{\delta_1}=-\beta_bk_2-(B_1-C_1)\delta_1k_2^{\delta_1}\notag\\
%\implies & (B_1-C_1)k_2^{\delta_1}(\delta_1-\delta_2)=-\beta_b(1-\delta_2)k_2-\beta_s\delta_2\\
\implies & B_1-C_1=\dfrac{\beta_s(-\delta_2)k_2^{-\delta_1}-\beta_b(1-\delta_2)k_2^{1-\delta_1}}{\delta_1-\delta_2}
\end{align}

\begin{align}
&\dfrac{B_1k_3^{\delta_1}}{B_1\delta_1k_3^{\delta_1-1}}=\dfrac{\beta_sk_3-\beta_b-(B_2-C_2)k_3^{\delta_2}}{\beta_s-(B_2-C_2)\delta_2k_3^{\delta_2-1}}\notag\\
\implies & \dfrac{k_3}{\delta_1}=\dfrac{\beta_sk_3-\beta_b-(B_2-C_2)k_3^{\delta_2}}{\beta_s-(B_2-C_2)\delta_2k_3^{\delta_2-1}}\notag\\
\implies & \beta_s\delta_1k_3-\beta_b\delta_1-(B_2-C_2)\delta_1k_3^{\delta_2}=\beta_sk_3-(B_2-C_2)\delta_2k_3^{\delta_2}\notag\\
%\implies & (B_2-C_2)k_3^{\delta_2}(\delta_1-\delta_2)=\beta_s(\delta_1-1)k_3-\beta_b\delta_1\\
\implies & B_2-C_2=\dfrac{\beta_s(\delta_1-1)k_3^{1-\delta_2}-\beta_b\delta_1k_3^{-\delta_2}}{\delta_1-\delta_2}
\end{align}
and
\begin{align}
&\dfrac{(B_2-C_2)k_3^{\delta_2}}{(B_2-C_2)\delta_2k_3^{\delta_2-1}}=\dfrac{\beta_sk_3-\beta_b-B_1k_3^{\delta_1}}{\beta_s-B_1\delta_1k_3^{\delta_1-1}}\notag\\
\implies & \dfrac{k_3}{\delta_2}=\dfrac{\beta_sk_3-\beta_b-B_1k_3^{\delta_1}}{\beta_s-B_1\delta_1k_3^{\delta_1-1}}\notag\\
\implies & \beta_s\delta_2k_3-\beta_b\delta_2-B_1\delta_2k_3^{\delta_1}=\beta_sk_3-B_1\delta_1k_3^{\delta_1}\notag\\
%\implies & B_1k_3^{\delta_1}(\delta_1-\delta_2)=\beta_s(1-\delta_2)k_3+\beta_b(\delta_2)\\
\implies & B_1=\dfrac{\beta_s(1-\delta_2)k_3^{1-\delta_1}-\beta_b(-\delta_2)k_3^{-\delta_1}}{\delta_1-\delta_2}
\end{align}
Note then that since $k_2=k_1$, we have
\begin{align*}
&k_2=\dfrac{\beta_s}{\beta_b}\cdot\dfrac{-\delta_2}{1-\delta_2}\\
\implies & \beta_s(-\delta_2)=\beta_b(1-\delta_2)k_2 \\
\implies & \beta_s(-\delta_2)k_2^{-\delta_1}=\beta_b(1-\delta_2)k_2^{1-\delta_1} \\
%\implies & \beta_s(-\delta_2)k_2^{-\delta_1}-\beta_b(1-\delta_2)k_2^{1-\delta_1}=0 \\
\implies & \dfrac{\beta_s(-\delta_2)k_2^{-\delta_1}-\beta_b(1-\delta_2)k_2^{1-\delta_1}}{\delta_1-\delta_2}=0 \\
\implies & B_1-C_1=0\\
\implies & B_1=C_1
\end{align*}
Also, since $k_3=k_4$, we have
\begin{align*}
&k_3=\dfrac{\beta_b}{\beta_s}\cdot\dfrac{\delta_1}{\delta_1-1}\\
\implies & \beta_b\delta_1=\beta_s(\delta_1-1)k_3 \\
\implies & \beta_b\delta_1k_3^{-\delta_2}=\beta_s(\delta_1-1)k_3^{1-\delta_2} \\
%\implies & \beta_s(\delta_1-1)k_3^{1-\delta_2}-\beta_b\delta_1k_3^{-\delta_2}=0 \\
\implies & \dfrac{\beta_s(\delta_1-1)k_3^{1-\delta_2}-\beta_b\delta_1k_3^{-\delta_2}}{\delta_1-\delta_2}=0 \\
\implies & B_2-C_2=0\\
\implies & B_2=C_2
\end{align*}
Hence, in this case
\beq{}
w_0(y)=\begin{cases} C_1y^{\delta_1} +\Bs -\Bb y & 0\leq y \leq k_1,\\
C_1y^{\delta_1}+C_2 y^{\delta_2} & k_1\leq y\leq k_4,\\
C_2y^{\delta_2}-\Bb +\Bs y & y\geq k_4.
\end{cases}
\eeq
Let us relabel these threshholds as 
\begin{align}
& k^*_1=k_1=k_2\label{k*1}\\
& k^*_2=k_3=k_4.\label{k*2}
\end{align} Then we have the following.
\begin{thmm}
Let $\delta_i$ be given by (\ref{delta1}), (\ref{delta2}) and $k_i$ be given by (\ref{k*1}),~(\ref{k*2}).  Then the following functions $w_1$, $w_{-1}$, and $w_0$ satisfy the HJB equations (\ref{ode}):\\
$w_1(y)=\begin{cases}
\beta_s-\beta_b y, &\text{if~}0\le y\le k^*_1, \\
\left(-\dfrac{\delta_2}{\beta_b}\right)^{-\delta_2}\left(\dfrac{\beta_s}{1-\delta_2}\right)^{1-\delta_2}y^{\delta_2},&\text{if~} y\ge k^*_1. \\
\end{cases}$\\
\\
$w_{-1}(y)=\begin{cases}
\left(\dfrac{\beta_s}{\delta_1}\right)^{\delta_1}\left(\dfrac{\delta_1-1}{\beta_b}\right)^{\delta_1-1}y^{\delta_1},& \text{if~}0\le y\le k^*_2 ,\\
\beta_s y-\beta_b, &\text{if~} y\ge k^*_2. \\
\end{cases}$\\
\\
$w_0(y)=\begin{cases}
\left(\dfrac{\beta_s}{\delta_1}\right)^{\delta_1}\left(\dfrac{\delta_1-1}{\beta_b}\right)^{\delta_1-1}y^{\delta_1}+\beta_s-\beta_b y, &\text{if~} 0\le y\le k^*_1,\\
\left(\dfrac{\beta_s}{\delta_1}\right)^{\delta_1}\left(\dfrac{\delta_1-1}{\beta_b}\right)^{\delta_1-1}y^{\delta_1}+\left(-\dfrac{\delta_2}{\beta_b}\right)^{-\delta_2}\left(\dfrac{\beta_s}{1-\delta_2}\right)^{1-\delta_2}y^{\delta_2}, &\text{if~} k^*_1\le y\le k^*_2 ,\\
\left(-\dfrac{\delta_2}{\beta_b}\right)^{-\delta_2}\left(\dfrac{\beta_s}{1-\delta_2}\right)^{1-\delta_2}y^{\delta_2}-\beta_b+\beta_s y,& \text{if~}y\ge k^*_2. \\
\end{cases}$
\end{thmm}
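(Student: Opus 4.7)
My plan is to verify the three variational inequalities of (\ref{ode}) region-by-region, using the thresholds $k^*_1$ and $k^*_2$ to partition $(0,\infty)$ into $\Gamma_1 = (0,k^*_1]$, $\Gamma_2 = [k^*_1,k^*_2]$, and $\Gamma_3 = [k^*_2,\infty)$. In each region, one argument of each $\min\{\cdot\}$ is zero by construction -- either $w_i$ coincides with its obstacle, or $w_i$ is a combination of the Euler solutions $y^{\delta_1}, y^{\delta_2}$ which are annihilated by $\rho-\L$ -- so only the non-negativity of the remaining arguments needs to be checked. The $C^1$ smoothness of each $w_i$ across its thresholds is built into the smooth-fit definitions; for $w_0$ this follows from the reduction $k_2=k_1$, $k_3=k_4$ derived just before the theorem, which collapses the four smooth-fit equations for $w_0$ into the two smooth-fit systems already solved for $w_1$ and $w_{-1}$.

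The key structural observation is the decomposition
\[
w_0(y) = \begin{cases} w_{-1}(y) + (\beta_s - \beta_b y), & y\in\Gamma_1, \\ w_{-1}(y) + w_1(y), & y\in\Gamma_2, \\ w_1(y) + (\beta_s y - \beta_b), & y\in\Gamma_3, \end{cases}
\]
valid because $w_{-1}=C_1 y^{\delta_1}$ throughout $(0,k^*_2]$ and $w_1=C_2 y^{\delta_2}$ throughout $[k^*_1,\infty)$. This decomposition makes the obstacle and differential inequalities for $w_0$ reduce algebraically to corresponding inequalities for $w_1$ and $w_{-1}$, so the real work lies in handling $w_1$ and $w_{-1}$ alone.

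For $w_1$, on $\Gamma_2\cup\Gamma_3$ I would verify $w_1-\beta_s+\beta_b y \ge 0$ by setting $g(y)=C_2 y^{\delta_2}+\beta_b y-\beta_s$, noting $g(k^*_1)=g'(k^*_1)=0$ from smooth-fit and $g''(y)>0$, so $g$ is convex with global minimum $0$ at $k^*_1$ and hence $g\ge 0$ on all of $(0,\infty)$. On $\Gamma_1$, $(\rho-\L)(\beta_s-\beta_b y)=(\rho-\mu_1)\beta_s-(\rho-\mu_2)\beta_b y$ is non-negative iff $y\le \frac{(\rho-\mu_1)\beta_s}{(\rho-\mu_2)\beta_b}$, and the identity $\frac{\rho-\mu_1}{\rho-\mu_2}=\frac{-\delta_1\delta_2}{(\delta_1-1)(1-\delta_2)}$ together with the definition of $k^*_1$ recasts this as $y\le \frac{\delta_1}{\delta_1-1}k^*_1$, which holds on $\Gamma_1$ since $\frac{\delta_1}{\delta_1-1}>1$. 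The verification for $w_{-1}$ is symmetric: $h(y)=C_1 y^{\delta_1}+\beta_b-\beta_s y$ is convex with $h(k^*_2)=h'(k^*_2)=0$, giving $h\ge 0$ globally, and on $\Gamma_3$ the differential inequality becomes $y\ge \frac{-\delta_2}{1-\delta_2}k^*_2$, which holds since $\frac{-\delta_2}{1-\delta_2}<1$.

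Substituting the decomposition then does the rest for $w_0$: on $\Gamma_2$, $w_0-w_1+\beta_b-\beta_s y=h(y)$ and $w_0-w_{-1}-\beta_s+\beta_b y=g(y)$, both $\ge 0$ from above; on $\Gamma_1$, the obstacle $w_0-w_{-1}-\beta_s+\beta_b y$ is identically zero, the other obstacle reduces to $h(y)\ge 0$, and $(\rho-\L)w_0=(\rho-\L)(\beta_s-\beta_b y)\ge 0$ by the $w_1$ computation; $\Gamma_3$ is entirely symmetric. The step I expect to require the most care is pinpointing the decomposition itself, since the entire modular strategy hinges on recognizing that the explicit $w_0$ on $\Gamma_2$ is literally $w_{-1}+w_1$; after that, every inequality becomes a direct consequence of the corresponding inequality for $w_1$ or $w_{-1}$, so no new convexity or identity computation is needed beyond those already performed.
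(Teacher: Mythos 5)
Your proposal is correct and follows essentially the same route as the paper: verify the four inequalities for $w_1$ and $w_{-1}$ region by region (the obstacle inequalities via the convexity/monotonicity of $C_2y^{\delta_2}+\beta_b y-\beta_s$ and $C_1y^{\delta_1}+\beta_b-\beta_s y$ vanishing to first order at $k^*_1$ and $k^*_2$, and the differential inequalities via the identity $\frac{\rho-\mu_1}{\rho-\mu_2}=\frac{-\delta_1\delta_2}{(\delta_1-1)(1-\delta_2)}$), then reduce everything for $w_0$ to these via the decomposition $w_0=w_1+w_{-1}$, which is exactly the observation the paper exploits on each region. The only cosmetic difference is that you argue $g\ge 0$ and $h\ge 0$ by convexity with a double zero, where the paper uses monotonicity away from the zero; these are interchangeable.
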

\begin{proof}We divide the first quadrant of the plane into 3 regions, 
\[\Gamma_1: 0<y\le k^*_1,~~~\/\ \/\ \/\ \\
\Gamma_2: k^*_1<y\le k^*_2,~~~\/\ \/\ \/\ \\
\Gamma_3: k^*_2<y
\]
Thus, to establish that we have found a solution to the HJB equations, we must establish the following list of variational inequalities:\\
\\
\[\begin{cases} (\rho-\mathcal{L})w_1(y)\ge 0 & \text{on }\Gamma_1\\
w_1(y)-\beta_s+\beta_b y\ge 0 & \text{on }\Gamma_2\cup\Gamma_3\\
w_{-1}(y)+\beta_b-\beta_s y\ge 0 & \text{on }\Gamma_1\cup\Gamma_2\\
(\rho-\mathcal{L})w_{-1}(y)\ge 0 & \text{on }\Gamma_3\\
(\rho-\mathcal{L})w_0(y)\ge 0 & \text{on }\Gamma_1\cup\Gamma_3\\
w_0(y)-w_1(y)+\beta_b-\beta_s y\ge 0 & \text{on }\Gamma_1\cup\Gamma_2\\
w_0(y)-w_{-1}(y)-\beta_s+\beta_b y\ge 0& \text{on }\Gamma_2\cup\Gamma_3\\
\end{cases}
\]
%-------------------------------------------------------------------------------------------------------------------------------------------------------------------
On $\Gamma_1$, 
\begin{align*}
(\rho-\mathcal{L})w_1(y)&=(\rho-\mathcal{L})(\beta_s-\beta_b y)\\
%&=\rho\beta_s-\rho\beta_b y-\mathcal{L}\beta_s+\mathcal{L}\beta_b y\\
&=\rho\beta_s-\mu_1\beta_s+\mu_1\beta_by+(\mu_2-\mu_1)\beta_by-\rho\beta_by\\
&=(\rho-\mu_1)\beta_s-(\rho-\mu_2)\beta_by
\end{align*}
Hence, 
\begin{align*}
(\rho-\mathcal{L})w_1(y)\ge0&\iff(\rho-\mu_1)\beta_s\ge(\rho-\mu_2)\beta_by\\
&\iff y\le\dfrac{\rho-\mu_1}{\rho-\mu_2}\cdot\dfrac{\beta_s}{\beta_b}\\
&\iff y\le\dfrac{\delta_1}{\delta_1-1}\cdot\dfrac{-\delta_2}{1-\delta_2}\cdot\dfrac{\beta_s}{\beta_b}\\
&\iff y\le\dfrac{\delta_1}{\delta_1-1}\cdot k^*_1,
\end{align*}
which holds, since $y\le k^*_1\le \dfrac{\delta_1}{\delta_1-1}\cdot k^*_1$.\\
%-------------------------------------------------------------------------------------------------------------------------------------------------------------------
On $\Gamma_2\cup\Gamma_3$, 
%\[w_1(y)-\beta_s+\beta_by=C_2y^{\delta_2}-\beta_s+\beta_by\]
%Hence 
\[w_1(y)-\beta_s+\beta_by\ge0\iff C_2y^{\delta_2}-\beta_s+\beta_by\ge0.\]
Let $f(y)=C_2y^{\delta_2}-\beta_s+\beta_by$.  Then
\begin{align*}
f'(y)\ge0&\iff C_2\delta_2y^{\delta_2-1}+\beta_b\ge0\\
&\iff C_2(-\delta_2)y^{\delta_2-1}\le \beta_b\\
&\iff y^{\delta_2-1}\le \dfrac{\beta_b}{C_2(-\delta_2)}=k_1^{\delta_2-1}\\
&\iff y^{1-\delta_2}\ge {k^*_1}^{1-\delta_2}\\
&\iff y\ge k^*_1,
\end{align*}
which clearly holds.  Hence $f(y)$ is increasing for $y\ge k^*_1.$  Since $f(k^*_1)=0$, it must be that $w_1(y)-\beta_s+\beta_by\ge0$ on $\Gamma_2\cup\Gamma_3$.\\
%-------------------------------------------------------------------------------------------------------------------------------------------------------------------
On $\Gamma_1\cup\Gamma_2$, 
%\[w_{-1}(y)+\beta_b-\beta_sy=C_1y^{\delta_1}+\beta_b-\beta_sy\]
%Hence 
\[w_{-1}(y)+\beta_b-\beta_sy\ge0\iff C_1y^{\delta_1}+\beta_b-\beta_sy\ge0.\]
Let $g(y)=C_1y^{\delta_1}+\beta_b-\beta_sy$.  Then
\begin{align*}
g'(y)\le0&\iff C_1\delta_1y^{\delta_1-1}-\beta_s\le0\\
&\iff C_1\delta_1y^{\delta_1-1}\le \beta_s\\
&\iff y^{\delta_1-1}\le \dfrac{\beta_s}{C_1\delta_1}={k^*_2}^{\delta_1-1}\\
&\iff y\le k^*_2,
\end{align*}
which clearly holds.  Hence $g(y)$ is decreasing for $y\le k^*_2.$  Since $g(k^*_2)=0$, it must be that $w_{-1}(y)+\beta_b-\beta_sy\ge0$ on $\Gamma_1\cup\Gamma_2$.\\
%-------------------------------------------------------------------------------------------------------------------------------------------------------------------
On $\Gamma_3$, 
\begin{align*}
(\rho-\mathcal{L})w_{-1}(y)&=(\rho-\mathcal{L})(\beta_sy-\beta_b )\\
%&=\rho\beta_sy-\rho\beta_b -\mathcal{L}\beta_sy+\mathcal{L}\beta_b \\
&=\rho\beta_sy-\mu_2\beta_sy+\mu_1\beta_b-\rho\beta_b\\
&=(\rho-\mu_2)\beta_sy-(\rho-\mu_1)\beta_b
\end{align*}
Hence, 
\begin{align*}
(\rho-\mathcal{L})w_{-1}(y)\ge0&\iff(\rho-\mu_2)\beta_sy\ge(\rho-\mu_1)\beta_b\\
&\iff y\ge\dfrac{\rho-\mu_1}{\rho-\mu_2}\cdot\dfrac{\beta_b}{\beta_s}\\
&\iff y\ge\dfrac{-\delta_2}{1-\delta_2}\cdot\dfrac{\delta_1}{\delta_1-1}\cdot\dfrac{\beta_b}{\beta_s}\\
&\iff y\ge\dfrac{-\delta_2}{1-\delta_2}\cdot k^*_2,
\end{align*}
which holds, since $y\ge k^*_2\ge \dfrac{-\delta_2}{1-\delta_2}\cdot k^*_2$.\\
%-------------------------------------------------------------------------------------------------------------------------------------------------------------------
On $\Gamma_1$, 
\begin{align*}
(\rho-\mathcal{L})w_0(y)&=(\rho-\mathcal{L})(w_{-1}(y)+w_1(y))\\
%&=(\rho-\mathcal{L})w_{-1}(y)+(\rho-\mathcal{L})w_1(y)\\
&=0+(\rho-\mathcal{L})w_1(y),
\end{align*}
and we have already established that $(\rho-\mathcal{L})w_1(y)\ge 0$ on $\Gamma_1$.\\
%-------------------------------------------------------------------------------------------------------------------------------------------------------------------
On $\Gamma_3$, 
\begin{align*}
(\rho-\mathcal{L})w_0(y)&=(\rho-\mathcal{L})(w_1(y)+w_{-1}(y))\\
&=(\rho-\mathcal{L})w_1(y)+(\rho-\mathcal{L})w_{-1}(y)\\
&=0+(\rho-\mathcal{L})w_{-1}(y),
\end{align*}
and we have already established that $(\rho-\mathcal{L})w_{-1}(y)\ge 0$ on $\Gamma_3$.\\
%-------------------------------------------------------------------------------------------------------------------------------------------------------------------
On $\Gamma_1$, 
\begin{align*}
w_0(y)-w_1(y)+\beta_b-\beta_sy&=C_1y^{\delta_1}+\beta_s-\beta_by-\beta_s+\beta_by+\beta_b-\beta_sy\\
&=C_1y^{\delta_1}+\beta_b-\beta_sy,
\end{align*}
and we have already established that $C_1y^{\delta_1}+\beta_b-\beta_sy\ge 0$ on $\Gamma_1$.\\
On $\Gamma_2$, 
\begin{align*}
w_0(y)-w_1(y)+\beta_b-\beta_sy&=C_1y^{\delta_1}+C_2y^{\delta_2}-C_2y^{\delta_2}+\beta_b-\beta_sy\\
&=C_1y^{\delta_1}+\beta_b-\beta_sy,
\end{align*}
and we have already established that $C_1y^{\delta_1}+\beta_b-\beta_sy\ge 0$ on $\Gamma_2$.\\
%-------------------------------------------------------------------------------------------------------------------------------------------------------------------
On $\Gamma_2$, 
\begin{align*}
w_0(y)-w_{-1}(y)-\beta_s+\beta_by&=C_1y^{\delta_1}+C_2y^{\delta_2}-C_1y^{\delta_1}-\beta_s+\beta_by\\
&=C_2y^{\delta_2}-\beta_s+\beta_by,
\end{align*}
and we have already established that $C_2y^{\delta_2}-\beta_s+\beta_by\ge 0$ on $\Gamma_2$.\\
On $\Gamma_3$, 
\begin{align*}
w_0(y)-w_{-1}(y)-\beta_s+\beta_by&=C_2y^{\delta_2}-\beta_b+\beta_sy-\beta_sy+\beta_b-\beta_s+\beta_by\\
&=C_2y^{\delta_2}-\beta_s+\beta_by,
\end{align*}
and we have already established that $C_2y^{\delta_2}-\beta_s+\beta_by\ge 0$ on $\Gamma_3$.\\
\end{proof}
%-------------------------------------------------------------------------------------------------------------------------------------------------------------------
\section{Verification Theorem}
\begin{thmm}
We have $v_i(x_1,x_2)=x_1w_i\left(\frac{x_2}{x_1}\right)=V_i(x_1,x_2)$, $i=-1,0,1$.  If initially $i=-1$, let $\tau_0^*=\inf{\left\{t\ge0:\left(X_t^1,X_t^2\right)\in \Gamma_3 \right\}}$.  If initially $i=1$, let $\tau_0^*=\inf{\left\{t\ge0:\left(X_t^1,X_t^2\right)\in \Gamma_1 \right\}}$.  Finally, if initially $i=0$, let $\tau_1^*=\inf{\left\{t\ge0:\left(X_t^1,X_t^2\right)\notin \Gamma_2 \right\}}$.  If $\left(X_{\tau_1^*}^1,X_{\tau_1^*}^2\right)\in \Gamma_1$, then $u^*=-1$ and $\tau_2^*=\inf{\left\{t\ge\tau_1^*:\left(X_t^1,X_t^2\right)\in \Gamma_3 \right\}}$.  Otherwise, if $\left(X_{\tau_1^*}^1,X_{\tau_1^*}^2\right)\in \Gamma_3$, then $u^*=1$ and $\tau_2^*=\inf{\left\{t\ge\tau_1^*:\left(X_t^1,X_t^2\right)\in \Gamma_1 \right\}}$.
\end{thmm}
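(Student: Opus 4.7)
The plan is to follow the same four-step scheme used in the first verification theorem, extended to handle three initial positions and the control $u\in\{-1,+1\}$. First, I would verify that the candidate solutions $w_{-1}, w_0, w_1$ are nonnegative and that the associated constants $C_1,C_2$ are positive, which is immediate from the closed-form expressions in the preceding theorem: $C_1,C_2>0$ from their explicit formulas, $w_1\ge 0$ follows from $\beta_s-\beta_b y\ge 0$ on $\Gamma_1$ (since $y\le k_1^*<\beta_s/\beta_b$) and from $C_2 y^{\delta_2}\ge 0$ elsewhere, and $w_{-1}\ge 0$ is symmetric. For $w_0$, on $\Gamma_2$ it is a sum of positive terms, and on $\Gamma_1,\Gamma_3$ one checks nonnegativity by convexity together with the smooth-fit values at $k_1^*$ and $k_2^*$, exactly as in the earlier Step 1.

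Next I would establish the linear growth bounds $|v_i(x_1,x_2)|\le A x_1+B x_2$ for $i=-1,0,1$ with suitable $A,B>0$. On each region $\Gamma_j$, $v_i(x_1,x_2)=x_1 w_i(x_2/x_1)$ has either the form $Cx_1^{1-\delta}x_2^{\delta}$ or an affine combination with $\pm\beta_s x_1\pm\beta_b x_2$. In the first case the ratio $x_2/x_1$ is bounded on $\Gamma_1\cup\Gamma_2$ or $\Gamma_2\cup\Gamma_3$, so $x_1^{1-\delta}x_2^{\delta}\le (\text{const})\,x_1$ holds uniformly; this lets us pick constants $A,B$ matching the $\beta_s,\beta_b$ terms. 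This step is routine but necessary to dominate the state process later.

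The main technical step is Step 3: for any admissible $\Lambda_i$, $v_i(x_1,x_2)\ge J_i(x_1,x_2,\Lambda_i)$. Because $v_i$ are $C^1$ everywhere and $C^2$ in the interior of each $\Gamma_j$, the variational inequalities $(\rho-\mathcal{L})w_i\ge 0$ together with the obstacle inequalities established in the preceding theorem imply $\rho v_i(x)-\mathcal{A}v_i(x)\ge 0$ in the distributional sense on $\{x_1,x_2>0\}$. Applying Dynkin's formula to $e^{-\rho t}v_i(X_t^1,X_t^2)$ up to $\theta\wedge N$ for any stopping time $\theta$ then gives the supermartingale inequality
\[
\mathbb{E}\bigl[e^{-\rho(\theta_1\wedge N)}v_i(X_{\theta_1\wedge N}^1,X_{\theta_1\wedge N}^2)\bigr]\ge\mathbb{E}\bigl[e^{-\rho(\theta_2\wedge N)}v_i(X_{\theta_2\wedge N}^1,X_{\theta_2\wedge N}^2)\bigr]
\]
for $0\le\theta_1\le\theta_2$. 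Using Step 2's bounds, choosing $\gamma_1,\gamma_2>1$ so that $\mathbb{E}[(e^{-\rho t}X_t^j)^{\gamma_j}]$ stays bounded (possible because $\rho>\mu_j$), I get uniform integrability of the relevant families. Sending $N\to\infty$ with Fatou then yields the $\theta$-indexed supermartingale property for $\theta<\infty$. Specializing and applying the obstacle inequalities $v_0\ge v_1-\beta_b x_1+\beta_s x_2$, $v_0\ge v_{-1}+\beta_s x_1-\beta_b x_2$, $v_1\ge \beta_s x_1-\beta_b x_2$, and $v_{-1}\ge -\beta_b x_1+\beta_s x_2$ at the relevant stopping times gives $v_i\ge J_i(x_1,x_2,\Lambda_i)$. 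The only genuine subtlety is the $i=0$ case, where one must split on the event $\{u=1\}$ versus $\{u=-1\}$ and apply the matching obstacle inequality on each; the argument is otherwise a direct adaptation of the earlier Step 3.

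Finally, Step 4 shows the candidate policies $\Lambda_i^*$ attain equality. With $\tau_1^*$ the first exit time of $\Gamma_2$ and $\tau_2^*$ the subsequent hitting time of the appropriate opposite region, $(X_t^1,X_t^2)\in\Gamma_2$ on $[0,\tau_1^*)$, where $(\rho-\mathcal{L})w_0=0$, so Dynkin's formula gives equality in the supermartingale inequality: $v_0(x_1,x_2)=\mathbb{E}[e^{-\rho\tau_1^*}v_0(X_{\tau_1^*}^1,X_{\tau_1^*}^2)\mathbb{I}_{\{\tau_1^*<\infty\}}]$. At $\tau_1^*$, $(X_{\tau_1^*}^1,X_{\tau_1^*}^2)$ lies in $\Gamma_1$ or $\Gamma_3$, where the obstacle condition is active and turns into an equality, letting us replace $v_0$ by $v_{-1}+\beta_s X^1-\beta_b X^2$ (if in $\Gamma_1$, so $u^*=-1$) or by $v_1-\beta_b X^1+\beta_s X^2$ (if in $\Gamma_3$, so $u^*=1$). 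Iterating the same argument on $[\tau_1^*,\tau_2^*)$ in the complementary region, where again $(\rho-\mathcal{L})w_i=0$, reduces the $v_{\pm1}$ term to the terminal payoff. The hardest part will be justifying uniform integrability cleanly under the $u=\pm 1$ split, since the two branches of the running reward have opposite signs and must be controlled separately; once that bookkeeping is done, Step 4 is a direct verification.
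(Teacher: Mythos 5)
Your proposal is correct and follows essentially the same route as the paper: the paper's own proof of this theorem is a condensed version of the four-step scheme you describe, going straight to the Dynkin/Fatou supermartingale inequality $\mathbb{E}\,e^{-\rho\tau_1}v_i(X_{\tau_1}^1,X_{\tau_1}^2)\ge\mathbb{E}\,e^{-\rho\tau_2}v_i(X_{\tau_2}^1,X_{\tau_2}^2)$, applying the obstacle inequalities with the split on $\{u=1\}$ versus $\{u=-1\}$ to get $v_i\ge J_i$, and then obtaining equality along $\tau_1^*,\tau_2^*$ where the relevant obstacle is active. Your explicit inclusion of the positivity and linear-growth steps and the uniform-integrability discussion only makes explicit what the paper defers to the earlier verification theorem.
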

\begin{proof}
Given $(\rho-\mathcal{A}) v_i(x_1,x_2)\ge 0$, $i=-1,0,1$, and applying Dynkin's formula and Fatou's Lemma as in  $\O{}$ksendal \cite{Oks}, we have for any stopping times $0\le\tau_1\le\tau_2$, almost surely, $\mathbb{E}e^{-\rho\tau_1}v_i\left(X_{\tau_1}^1, X_{\tau_1}^2\right)\ge\mathbb{E}e^{-\rho\tau_2}v_i\left(X_{\tau_2}^1, X_{\tau_2}^2\right).$\\
Hence, we have 
\begin{align*}
v_0(x_1,x_2)\ge&~\mathbb{E}\left[e^{-\rho\tau_1}v_0\left(X_{\tau_1}^1, X_{\tau_1}^2\right)\right]\\
%\ge&~\mathbb{E}\left[e^{-\rho\tau_1}v_0\left(X_{\tau_1}^1, X_{\tau_1}^2\right)\mathbb{I}_{\{\tau_1<\infty\}}\right]\\
\ge & ~ \mathbb{E}\left[e^{-\rho\tau_1}\left(v_1\left(X_{\tau_1}^1, X_{\tau_1}^2\right)-\beta_bX_{\tau_1}^1+\beta_sX_{\tau_1}^2\right)\mathbb{I}_{\{\tau_1<\infty\}}\mathbb{I}_{\{u=1\}}\right]\\
& ~+ \mathbb{E}\left[e^{-\rho\tau_1}\left(v_{-1}\left(X_{\tau_1}^1, X_{\tau_1}^2\right)+\beta_sX_{\tau_1}^1-\beta_bX_{\tau_1}^2\right)\mathbb{I}_{\{\tau_1<\infty\}}\mathbb{I}_{\{u=-1\}}\right]\\
%=&~\mathbb{E}\left[e^{-\rho\tau_1}v_1\left(X_{\tau_1}^1, X_{\tau_1}^2\right)\mathbb{I}_{\{\tau_1<\infty\}}\mathbb{I}_{\{u=1\}}\right]\\
% \mathbb{E}\left[e^{-\rho\tau_1}v_{-1}\left(X_{\tau_1}^1, X_{\tau_1}^2\right)\mathbb{I}_{\{\tau_1<\infty\}}\mathbb{I}_{\{u=-1\}}\right]\\
%&~+ \mathbb{E}\left[e^{-\rho\tau_1}\left(\beta_sX_{\tau_1}^1-\beta_bX_{\tau_1}^2\right)\mathbb{I}_{\{\tau_1<\infty\}}\mathbb{I}_{\{u=-1\}}\right]\\
%&~-\mathbb{E}\left[e^{-\rho\tau_1}\left(\beta_bX_{\tau_1}^1-\beta_sX_{\tau_1}^2\right)\mathbb{I}_{\{\tau_1<\infty\}}\mathbb{I}_{\{u=1\}}\right]\\
%\ge&~\mathbb{E}\left[e^{-\rho\tau_2}v_1\left(X_{\tau_2}^1, X_{\tau_2}^2\right)\mathbb{I}_{\{\tau_2<\infty\}}\mathbb{I}_{\{u=1\}}\right]\\
%&~+ \mathbb{E}\left[e^{-\rho\tau_2}v_{-1}\left(X_{\tau_2}^1, X_{\tau_2}^2\right)\mathbb{I}_{\{\tau_2<\infty\}}\mathbb{I}_{\{u=-1\}}\right]\\
%&~+ \mathbb{E}\left[e^{-\rho\tau_1}\left(\beta_sX_{\tau_1}^1-\beta_bX_{\tau_1}^2\right)\mathbb{I}_{\{\tau_1<\infty\}}\mathbb{I}_{\{u=-1\}}\right]\\
%&~-\mathbb{E}\left[e^{-\rho\tau_1}\left(\beta_bX_{\tau_1}^1-\beta_sX_{\tau_1}^2\right)\mathbb{I}_{\{\tau_1<\infty\}}\mathbb{I}_{\{u=1\}}\right]\\
\ge&~\mathbb{E}\left[e^{-\rho\tau_2}\left(\beta_sX_{\tau_2}^1-\beta_bX_{\tau_2}^2\right)\mathbb{I}_{\{\tau_2<\infty\}}\mathbb{I}_{\{u=1\}}\right]- \mathbb{E}\left[e^{-\rho\tau_2}\left(\beta_bX_{\tau_2}^1 -\beta_sX_{\tau_2}^2\right)\mathbb{I}_{\{\tau_2<\infty\}}\mathbb{I}_{\{u=-1\}}\right]\\
&~+ \mathbb{E}\left[e^{-\rho\tau_1}\left(\beta_sX_{\tau_1}^1-\beta_bX_{\tau_1}^2\right)\mathbb{I}_{\{\tau_1<\infty\}}\mathbb{I}_{\{u=-1\}}\right]-\mathbb{E}\left[e^{-\rho\tau_1}\left(\beta_bX_{\tau_1}^1-\beta_sX_{\tau_1}^2\right)\mathbb{I}_{\{\tau_1<\infty\}}\mathbb{I}_{\{u=1\}}\right]\\
%=&~\mathbb{E}\big[\left\{e^{-\rho\tau_2}\left(\beta_s X_{\tau_2}^1-\beta_b X_{\tau_2}^2\right)\mathbb{I}_{\{\tau_2<\infty\}}-e^{-\rho\tau_1}\left(\beta_b X_{\tau_1}^1-\beta_s X_{\tau_1}^2\right)\mathbb{I}_{\{\tau_1<\infty\}}\right\}\mathbb{I}_{\{u=1\}}\\
%&~+\left\{e^{-\rho\tau_1}\left(\beta_s X_{\tau_1}^1-\beta_b X_{\tau_1}^2\right)\mathbb{I}_{\{\tau_1<\infty\}}-e^{-\rho\tau_2}\left(\beta_b X_{\tau_2}^1-\beta_s X_{\tau_2}^2\right)\mathbb{I}_{\{\tau_2<\infty\}}\right\}\mathbb{I}_{\{u=-1\}}\big]\\
=&~J_0\left(x_1,x_2,\tau_1,\tau_2,u\right),
\end{align*}
for all $0\le\tau_1\le\tau_2$.  This implies $v_0\left(x_1,x_2\right)\ge V_0\left(x_1,x_2\right)$.  Also,
\begin{align*}
v_{1}(x_1,x_2)\ge&~\mathbb{E}\left[e^{-\rho\tau_0}v_{1}\left(X_{\tau_0}^1, X_{\tau_0}^2\right)\right]\\
\ge&~\mathbb{E}\left[e^{-\rho\tau_0}v_{1}\left(X_{\tau_0}^1, X_{\tau_0}^2\right)\mathbb{I}_{\{\tau_0<\infty\}}\right]\\
%=&~\mathbb{E}\left[e^{-\rho\tau_0}\left(\beta_s X_{\tau_0}^2-\beta_bX_{\tau_0}^1\right)\mathbb{I}_{\{\tau_0<\infty\}}\right]\\
%=&~\mathbb{E}\left[-e^{-\rho\tau_0}\left(\beta_b X_{\tau_0}^1-\beta_sX_{\tau_0}^2\right)\mathbb{I}_{\{\tau_0<\infty\}}\right]\\
=&~J_1\left(x_1,x_2,\tau_0\right),\\
v_{-1}(x_1,x_2)\ge&~\mathbb{E}\left[e^{-\rho\tau_0}v_{-1}\left(X_{\tau_0}^1, X_{\tau_0}^2\right)\right]\\
&\ge\mathbb{E}\left[e^{-\rho\tau_0}\left(\beta_s X_{\tau_0}^1-\beta_bX_{\tau_0}^2\right)\mathbb{I}_{\{\tau_0<\infty\}}\right]\\
=&~J_{-1}\left(x_1,x_2,\tau_0\right),
\end{align*}
for all $0\le\tau_0$.  Hence $v_{1}\left(x_1,x_2\right)\ge V_{1}\left(x_1,x_2\right)$ and $v_{-1}\left(x_1,x_2\right)\ge V_{-1}\left(x_1,x_2\right)$.
\par 
Now define $\tau_1^*=\inf{\left\{t\ge0:\left(X_t^1,X_t^2\right)\notin \Gamma_2 \right\}}$.  If $\left(X_{\tau_1^*}^1,X_{\tau_1^*}^2\right)\in \Gamma_1$, then $\tau_2^*=$\\$\inf{\left\{t\ge\tau_1^*:\left(X_t^1,X_t^2\right)\in \Gamma_3 \right\}}$.  Otherwise, if $\left(X_{\tau_1^*}^1,X_{\tau_1^*}^2\right)\in \Gamma_3$, then $\tau_2^*=$\\$\inf{\left\{t\ge\tau_1^*:\left(X_t^1,X_t^2\right)\in \Gamma_1 \right\}}$.  Using Dynkin's formula, we obtain 
$$v_0(x_1,x_2)=\mathbb{E}\left[e^{-\rho\tau_1^*}v_0\left(X_{\tau_1^*}^1, X_{\tau_1^*}^2\right)\mathbb{I}_{\{\tau_1^*<\infty\}}\right],$$ 
$$\mathbb{E}\left[e^{-\rho\tau_1^*}v_1\left(X_{\tau_1^*}^1, X_{\tau_1^*}^2\right)\mathbb{I}_{\{\tau_1^*<\infty\}}\right]=\mathbb{E}\left[e^{-\rho\tau_2^*}v_1\left(X_{\tau_2^*}^1, X_{\tau_2^*}^2\right)\mathbb{I}_{\{\tau_2^*<\infty\}}\right],$$
 and $$\mathbb{E}\left[e^{-\rho\tau_1^*}v_{-1}\left(X_{\tau_1^*}^1, X_{\tau_1^*}^2\right)\mathbb{I}_{\{\tau_1^*<\infty\}}\right]=\mathbb{E}\left[e^{-\rho\tau_2^*}v_{-1}\left(X_{\tau_2^*}^1, X_{\tau_2^*}^2\right)\mathbb{I}_{\{\tau_2^*<\infty\}}\right].$$
Thus,
\begin{align*}
v_0(x_1,x_2)=&~\mathbb{E}\left[e^{-\rho\tau_1^*}v_0\left(X_{\tau_1^*}^1, X_{\tau_1^*}^2\right)\mathbb{I}_{\{\tau_1^*<\infty\}}\right]\\
=& ~ \mathbb{E}\left[e^{-\rho\tau_1^*}\left(v_1\left(X_{\tau_1^*}^1, X_{\tau_1^*}^2\right)-\beta_bX_{\tau_1^*}^1+\beta_sX_{\tau_1^*}^2\right)\mathbb{I}_{\{\tau_1^*<\infty\}}\mathbb{I}_{\{u^*=1\}}\right]\\
& ~+ \mathbb{E}\left[e^{-\rho\tau_1^*}\left(v_{-1}\left(X_{\tau_1^*}^1, X_{\tau_1^*}^2\right)+\beta_sX_{\tau_1^*}^1-\beta_bX_{\tau_1^*}^2\right)\mathbb{I}_{\{\tau_1^*<\infty\}}\mathbb{I}_{\{u^*=-1\}}\right]\\
%=&~\mathbb{E}\left[e^{-\rho\tau_1^*}v_1\left(X_{\tau_1^*}^1, X_{\tau_1^*}^2\right)\mathbb{I}_{\{\tau_1^*<\infty\}}\mathbb{I}_{\{u^*=1\}}\right]\\
%&~+ \mathbb{E}\left[e^{-\rho\tau_1^*}v_{-1}\left(X_{\tau_1^*}^1, X_{\tau_1^*}^2\right)\mathbb{I}_{\{\tau_1^*<\infty\}}\mathbb{I}_{\{u^*=-1\}}\right]\\
%&~+ \mathbb{E}\left[e^{-\rho\tau_1^*}\left(\beta_sX_{\tau_1^*}^1-\beta_bX_{\tau_1^*}^2\right)\mathbb{I}_{\{\tau_1^*<\infty\}}\mathbb{I}_{\{u^*=-1\}}\right]\\
%&~-\mathbb{E}\left[e^{-\rho\tau_1^*}\left(\beta_bX_{\tau_1^*}^1-\beta_sX_{\tau_1^*}^2\right)\mathbb{I}_{\{\tau_1^*<\infty\}}\mathbb{I}_{\{u^*=1\}}\right]\\
%=&~\mathbb{E}\left[e^{-\rho\tau_2^*}v_1\left(X_{\tau_2^*}^1, X_{\tau_2^*}^2\right)\mathbb{I}_{\{\tau_2^*<\infty\}}\mathbb{I}_{\{u^*=1\}}\right]\\
%&~+ \mathbb{E}\left[e^{-\rho\tau_2^*}v_{-1}\left(X_{\tau_2^*}^1, X_{\tau_2^*}^2\right)\mathbb{I}_{\{\tau_2^*<\infty\}}\mathbb{I}_{\{u^*=-1\}}\right]\\
%&~+ \mathbb{E}\left[e^{-\rho\tau_1^*}\left(\beta_sX_{\tau_1^*}^1-\beta_bX_{\tau_1^*}^2\right)\mathbb{I}_{\{\tau^*_1<\infty\}}\mathbb{I}_{\{u^*=-1\}}\right]\\
%&~-\mathbb{E}\left[e^{-\rho\tau_1^*}\left(\beta_bX_{\tau_1^*}^1-\beta_sX_{\tau_1^*}^2\right)\mathbb{I}_{\{\tau_1^*<\infty\}}\mathbb{I}_{\{u^*=1\}}\right]\\
=&~\mathbb{E}\left[e^{-\rho\tau_2^*}\left(\beta_sX_{\tau_2^*}^1-\beta_bX_{\tau_2^*}^2\right)\mathbb{I}_{\{\tau_2^*<\infty\}}\mathbb{I}_{\{u^*=1\}}\right]\\
&~- \mathbb{E}\left[e^{-\rho\tau_2^*}\left(\beta_bX_{\tau_2^*}^1 -\beta_sX_{\tau_2^*}^2\right)\mathbb{I}_{\{\tau_2^*<\infty\}}\mathbb{I}_{\{u^*=-1\}}\right]\\
&~+ \mathbb{E}\left[e^{-\rho\tau_1^*}\left(\beta_sX_{\tau_1^*}^1-\beta_bX_{\tau_1^*}^2\right)\mathbb{I}_{\{\tau_1^*<\infty\}}\mathbb{I}_{\{u^*=-1\}}\right]\\
&~-\mathbb{E}\left[e^{-\rho\tau_1^*}\left(\beta_bX_{\tau_1^*}^1-\beta_sX_{\tau_1^*}^2\right)\mathbb{I}_{\{\tau_1^*<\infty\}}\mathbb{I}_{\{u^*=1\}}\right]\\
%=&~\mathbb{E}\Big[\left\{e^{-\rho\tau_2^*}\left(\beta_s X_{\tau_2^*}^1-\beta_b X_{\tau_2^*}^2\right)\mathbb{I}_{\{\tau_2^*<\infty\}}-e^{-\rho\tau_1^*}\left(\beta_b X_{\tau_1^*}^1-\beta_s X_{\tau_1^*}^2\right)\mathbb{I}_{\{\tau_1^*<\infty\}}\right\}\mathbb{I}_{\{u^*=1\}}\\
%&~+\left\{e^{-\rho\tau_1^*}\left(\beta_s X_{\tau_1^*}^1-\beta_b X_{\tau_1^*}^2\right)\mathbb{I}_{\{\tau_1^*<\infty\}}-e^{-\rho\tau_2^*}\left(\beta_b X_{\tau_2^*}^1-\beta_s X_{\tau_2^*}^2\right)\mathbb{I}_{\{\tau_2^*<\infty\}}\right\}\mathbb{I}_{\{u^*=-1\}}\Big]\\
=&~J_0\left(x_1,x_2,\tau_1^*,\tau_2^*,u^*\right).
\end{align*}
Similarly,
\begin{align*}
v_{1}(x_1,x_2)=&\mathbb{E}\left[e^{-\rho\tau_0^*}v_{1}\left(X_{\tau_0^*}^1, X_{\tau_0^*}^2\right)\mathbb{I}_{\{\tau_0^*<\infty\}}\right]\\
=&~\mathbb{E}\left[e^{-\rho\tau_0^*}\left(\beta_s X_{\tau_0^*}^2-\beta_bX_{\tau_0^*}^1\right)\mathbb{I}_{\{\tau_0^*<\infty\}}\right]\\
=&~\mathbb{E}\left[-e^{-\rho\tau_0^*}\left(\beta_b X_{\tau_0^*}^1-\beta_sX_{\tau_0^*}^2\right)\mathbb{I}_{\{\tau_0^*<\infty\}}\right]\\
=&~J_1\left(x_1,x_2,\tau_0^*\right),
\end{align*}
and
\begin{align*}
v_{-1}(x_1,x_2)=&\mathbb{E}\left[e^{-\rho\tau_0^*}v_{-1}\left(X_{\tau_0^*}^1, X_{\tau_0^*}^2\right)\mathbb{I}_{\{\tau_0^*<\infty\}}\right]\\
=&~\mathbb{E}\left[e^{-\rho\tau_0^*}\left(\beta_s X_{\tau_0^*}^1-\beta_bX_{\tau_0^*}^2\right)\mathbb{I}_{\{\tau_0^*<\infty\}}\right]\\
=&~J_{-1}\left(x_1,x_2,\tau_0^*\right).
\end{align*}
\end{proof}
%%%%%%%%%%%%%%%%%%%%%%%%%%%%%%%%%%%%%%%%%%%%%%%%%%%%%%%%%%%%%%%%%%%%%%%%%%%%%%%%%
%%%%%%%%%%%%%%%%%%%%%%%%%%%%%%%%%%%%%%%%%%%%%%%%%%%%%%%%%%%%%%%%%%%%%%%%%%%%%%%%%
%%%%%%%%%%%%%%%%%%%%%%%%%%%%%%%%%%%%%%%%%%%%%%%%%%%%%%%%%%%%%%%%%%%%%%%%%%%%%%%%%
%%%%%%%%%%%%%%%%%%%%%%%%%%%%%%%%%%%%%%%%%%%%%%%%%%%%%%%%%%%%%%%%%%%%%%%%%%%%%%%%%

\section{A Numerical Example}
\par As in Section 6 above, we consider adjusted closing price data for Walmart (WMT) and Target (TGT) from 2010 to 2020.  The first half of the data is used to calibrate the model, and the second half is used to test the results.  Using a least-squares method, we obtain the following parameters: $\mu_1=0.09696$, $\mu_2=0.14347$, $\sigma_{11}=0.19082$, $\sigma_{12}=0.04036$, $\sigma_{21}=0.04036$, and $\sigma_{22}=0.13988$.  We specify $K=0.001$ and $\rho=0.5$.  Then we find $k^*_1=0.85527$, and $k^*_2=1.32175$.
\par
Next we examine the dependence of $k^*_1$ and $k^*_2$ on the parameters by varying each.  We see that $k^*_1$ and $k^*_2$ both decrease in $\mu_1$.  This leads to a larger buying region, $\Gamma_3$.  On the other hand, both $k^*_1$ and $k^*_2$ increase in $\mu_2$.  This creates a larger $\Gamma_1$ and, hence, encourages early exit.  When varying $\sigma_{11}$ and $\sigma_{22}$, we find that $k^*_1$ decreases while $k^*_2$ increases, in both $\sigma_{11}$ and $\sigma_{22}$.  This leads to a smaller buying zone, $\Gamma_1$, due to the increased risk, as well as a smaller selling zone, $\Gamma_3$, because there is more price movement overall.  However, as $\sigma_{12}=\sigma_{21}$ increases, we find that $k^*_1$ increases, while $k^*_2$ decreases.  The greater correlation leads to a larger $\Gamma_1$, and hence more opportunity for buying, as well as a larger $\Gamma_3$, and hence more opportunity for selling.  Since $r$ represents the rate at which money loses value over time, $k^*_1$ increases in $r$, while $k^*_2$ decreases in $r$, reflecting the fact that we are less likely to want to hold in this case.  Finally, larger transaction costs discourage trading.  Naturally, as $K$ increases, $k^*_1$ decreases and $k^*_2$ increases.  

\begin{table}[H]
\begin{center}
\begin{tabular}{ |c|c|c|c|c|c| } 
\hline
$\mu_1$ & $-0.00304$ & $0.04696$ & $0.09696$ & $0.14696$ & $0.19696$ \\
 \hline
$k^*_1$ & $0.91380$ & $0.89057$ & $0.85527$ & $0.80194$ & $0.72644$  \\ 
\hline
$k^*_2$ & $1.54402$ & $1.42682$ & $1.32175$ & $1.23477$ & $1.17006$  \\
\hline
\end{tabular}
\caption{$k^*_1$ and $k^*_2$ with varying $\mu_1$}
\end{center}
\end{table}

\begin{table}[H]
\begin{center}
\begin{tabular}{ |c|c|c|c|c|c| } 
\hline
$\mu_2$ & $0.04347$ & $0.09347$ & $0.14347$ & $0.19347$ & $0.24347$ \\
\hline
$k^*_1$ & $0.76457$ & $0.81341$ & $0.85527$ & $0.88736$ & $0.91037$  \\ 
\hline
$k^*_2$ & $1.15468$ & $1.21883$ & $1.32175$ & $1.48176$ & $1.72581$  \\ 
\hline
\end{tabular}
\caption{$k^*_1$ and $k^*_2$ with varying $\mu_2$}
\end{center}
\end{table}

\begin{table}[H]
\begin{center}
\begin{tabular}{ |c|c|c|c|c|c| } 
\hline
$\sigma_{11}$ & $0.09082$ & $0.14082$ & $0.19082$ & $0.24082$ & $0.29082$\\
\hline
$k^*_1$ & $0.92069$ & $0.89220$ & $0.85527$ & $0.81532$ & $0.77497$\\
\hline
$k^*_2$ & $1.22784$ & $1.26704$ & $1.32175$ & $1.38652$ & $1.45871$ \\
\hline
\end{tabular}
\caption{$k^*_1$ and $k^*_2$ with varying $\sigma_{11}$}
\end{center}
\end{table}

\begin{table}[H]
\begin{center}
\begin{tabular}{ |c|c|c|c|c|c| } 
\hline
$\sigma_{22}$ & $0.03988$ & $	0.08988$ & $0.13988$ & $0.18988$ & $0.23988$\\
\hline
$k^*_1$ & $0.88356$ & $0.87601$ & $0.85527$ & $0.82593$ & $0.79206$\\
\hline
$k^*_2$ & $1.27943$ & $1.29045$ & $1.32175$ & $1.36871$ & $1.42724$ \\
\hline
\end{tabular}
\caption{$k^*_1$ and $k^*_2$ with varying $\sigma_{22}$}
\end{center}
\end{table}

\begin{table}[H]
\begin{center}
\begin{tabular}{ |c|c|c|c|c|c| } 
\hline
$\sigma_{12}$ & $-0.05964$ & $-0.00964$ & $0.04036$ & $0.09036$ & $0.14036$\\
\hline
$k^*_1$& $0.73242$ & $0.79189$ & $0.85527$ & $0.92029$ & $0.97527$\\
\hline
$k^*_2$ & $1.54345$ & $1.42754$ & $1.32175$ & $1.22837$ & $1.15911$\\
\hline
\end{tabular}
\caption{$k^*_1$ and $k^*_2$ with varying $\sigma_{12}=\sigma_{21}$}
\end{center}
\end{table}

\begin{table}[H]
\begin{center}
\begin{tabular}{ |c|c|c|c|c|c| } 
\hline
$r$ & $0.4$ & $0.45$ & $0.5$ & $0.55$ & $0.6$\\
\hline
$k^*_1$ & $0.84068$ & $0.84858$ & $0.85527$ & $0.86105$ & $0.86611$\\
\hline
$k^*_2$ & $1.40518$ & $1.35725$ & $1.32175$ & $1.29425$ & $1.27222$ \\
\hline
\end{tabular}
\caption{$k^*_1$ and $k^*_2$ with varying $r$}
\end{center}
\end{table}

\begin{table}[H]
\begin{center}
\begin{tabular}{ |c|c|c|c|c|c| } 
\hline
$K$ & $0.0000$ & $0.0005$ & $0.0010$ & $0.0015$ & $0.0020$\\
\hline
$k^*_1$ & $0.85698$ & $0.85613$ & $0.85527$ & $0.85442$ & $0.85356$\\
\hline
$k^*_2$ & $1.31911$ & $1.32043$ & $1.32175$ & $1.32307$ & $1.32439$ \\
\hline
\end{tabular}
\caption{$k^*_1$ and $k^*_2$ with varying $K$}
\end{center}
\end{table}

\clearpage

\clearpage

\clearpage


\begin{thebibliography}{widest entry}
\bibitem{Eloe}
P. Eloe, R.H. Liu, M. Yatsuki, G. Yin, and Q. Zhang,
Optimal selling rules in a regime-switching exponential
Gaussian diffusion model, 
{\sl SIAM Journal on Applied Mathematics},
\textbf{69},  810-829, (2008).

\bibitem{Gatev}
Gatev, E., Goetzmann, W.N., Rouwenhorst, K.G.,
Pairs trading: performance of a relative-value arbitrage rule, 
{\sl },Revi. Finance. Stud.
\textbf{19},  797-827, (2006).

\bibitem{VCalc} Marsden, J. and Tromba, A.: \textit{Vector Calculus}, 2nd edn. W. H. Freeman and Company, New York (1981).

\bibitem{Oks} \O ksendal, B.: \textit{Stochastic Differential Equations}, 6th edn. Springer-Verlag, New York (2003).

\bibitem{JOptim} Tie, J., Zhang H., and Zhang Q.: An Optimal Strategy for Pairs Trading Under Geometric Brownian Motions, \textit{Journal of Optimization Thoery and Applications}. \textbf{179}, 654 - 675 (2018)

\bibitem{Vid}
Vidyamurthy, G.,
Pairs Trading: Quantitative Methods and Analysis,
Wiley, Hoboken (2004)

\end{thebibliography}
\end{document}